\newtheorem{thm}{Theorem}
\newtheorem{lem}[thm]{Lemma}
\newtheorem{prop}[thm]{Proposition}
\newtheorem{cor}[thm]{Corollary}
\newtheorem{rem}[thm]{Remark}
\theoremstyle{definition}
\newcommand\scalemath[2]{\scalebox{#1}{\mbox{\ensuremath{\displaystyle #2}}}}
\newcommand{\pgl}{\PGL(2,\mathbb{C})}
\newcommand{\zz}{ \mathbb{Z}_{2}\times \mathbb{Z}_{2}}
\newcommand{\sldos}{\SL(2,\mathbb{C})}
\DeclareMathOperator{\tr}{tr}
\DeclareMathOperator{\Id}{Id}
\DeclareMathOperator{\Stab}{Stab}
\DeclareMathOperator{\GL}{GL}
\DeclareMathOperator{\SL}{SL}
\DeclareMathOperator{\PGL}{PGL}
\DeclareMathOperator{\Gr}{Gr}
\newcommand{\cM}{\mathcal{M}} 
\newcommand{\cH}{\mathcal{H}}
\newcommand{\x}{\times}
\newcommand{\CC}{\mathbb{C}} 
\newcommand{\RR}{\mathbb{R}} 
\newcommand{\ZZ}{\mathbb{Z}} 
\newcommand{\too}{\longrightarrow}
\newcommand{\imat}{\sqrt{-1}} 
\newcommand{\im}{\mathrm{im}\,}
\newcommand{\isom}{\cong}
\newcommand{\oX}{\overline{X}{}}
\newcommand{\oW}{\overline{W}{}}
\author[J. Mart\'{\i}nez]{Javier Mart\'{\i}nez}
\address{Facultad de Matem\'aticas, Universidad Complutense de Madrid,
Plaza Ciencias 3, 28040 Madrid, Spain}
\email{javiermartinez@mat.ucm.es}
\author[V. Mu\~{n}oz]{Vicente Mu\~{n}oz}
\address{Facultad de Matem\'aticas, Universidad Complutense de Madrid,
Plaza Ciencias 3, 28040 Madrid, Spain}
\address{Instituto de Ciencias Matem\'aticas (CSIC-UAM-UC3M-UCM),
C/ Nicolas Cabrera 15, 28049 Madrid, Spain}
\email{vicente.munoz@mat.ucm.es}
\title[E-polynomials of $\SL(2,\CC)$-character varieties]{E-polynomials 
of the $\SL(2,\CC)$-character varieties of surface groups}
\date{20 July 2014}
\subjclass[2010]{Primary: 14C30. Secondary: 14D20, 14L24, 32J25}
\keywords{Moduli spaces, E-polynomial, character variety, surface group}
\begin{document}

\begin{abstract}
We compute the E-polynomials of the moduli spaces of representations of the
fundamental group of a once-punctured surface of any genus into $\SL(2,\CC)$,
for any possible holonomy around the puncture.
We follow the geometric technique introduced in \cite{lomune}, based on stratifying the space of
representations, and on the analysis of the behaviour of the E-polynomial under fibrations.
\end{abstract}

\maketitle

\section{Introduction}\label{sec:introduction}

Let $X$ be a smooth complex projective curve of genus $g\geq 1$, and let $G$ be a complex reductive group.
The $G$-character variety of $X$ is defined as the moduli space of semisimple representations of
$\pi_{1}(X)$ into $G$, that is,
 $$
  \cM (G)= \{(A_{1},B_{1},\ldots,A_{g},B_{g}) \in G^{2g} \,| \prod_{i=1}^{g}[A_{i},B_{i}]=\Id\}/ / G.
 $$
For the complex linear groups $G=\GL(n,\CC), \SL(n,\CC)$, the representations of $\pi_1(X)$ into $G$ can be understood
as $G$-local systems $E\to X$, hence defining
a flat bundle $E$ which has $\deg E=0$. A natural generalization consists of allowing bundles $E$ of
non-zero degree $d$. The $G$-local systems on $X$ correspond to representations $\rho:\pi_1(X - \{p_0\}) \to G$,
where $p_0\in X$ is a point, and $\rho(\gamma)= \frac{d}{n} \Id$, $\gamma$ a loop around $p_0$, giving
rise to the moduli space of twisted representations
 $$
  \cM^d (G)= \{(A_{1},B_{1},\ldots,A_{g},B_{g}) \in G^{2g} \,| \prod_{i=1}^{g}[A_{i},B_{i}]=e^{2\pi \imat d/n} \Id \}/ / G.
 $$
A related object is the moduli space of parabolic representations with one puncture, given by those 
representations whose monodromy around $p_0$ is $\rho(\gamma)=\xi=\text{diag}(\lambda_1,\ldots, \lambda_n)$,
where $\lambda_1\cdots \lambda_n =1$.
The parabolic character variety is
 $$
 \cM_{\xi}(G) = \{ (A_{1},B_{1},\ldots, A_{g},B_{g})\in G^{2g} \  |  
 \prod_{i=1}^{g}[A_{i},B_{i}] = \xi \} / /\Stab(\xi) .
 $$
These spaces are naturally generalized by the character varieties with arbitrary monodromy $C\in G$,
 $$
 \cM_{C}(G) = \{ (A_{1},B_{1},\ldots, A_{g},B_{g})\in G^{2g} \  |  
 \prod_{i=1}^{g}[A_{i},B_{i}] = C \} / /\Stab(C) .
 $$
Here $C$ can be central, diagonalizable, or of Jordan type.

\medskip

The space $\cM^d(G)$ is homeomorphic to the moduli space of  $G$-Higgs bundles  $\cH^d(G)$,
which parametrizes pairs $(E,\Phi)$, consisting of a vector bundle
$E\to X$ of degree $d$ and rank $n$ (with fixed determinant in the case $G=\SL(n,\CC)$),
and a homomorphism $\Phi:E\to E\otimes K_X$, known as the Higgs field (in the case $G=\SL(n,\CC)$, 
the Higgs field has trace $0$). This homeomorphism has been used to compute the 
cohomology of the moduli space $\cM^d(G)$ when $\gcd(n,d)=1$. 
Poincar\'e polynomials for $G=\SL(2,\CC)$ were computed in \cite{hitchin:1987}, for $G=\SL(3,\CC)$ 
in \cite{gothen:1994} and for $G=\GL(4,\CC)$ in \cite{garciaprada-heinloth-schmitt:2011}. A recursive formula for the motive of the moduli space of Higgs bundles of arbitrary rank and degree coprime to the rank has been given in \cite{garciaprada-heinloth:2013}. In particular, this gives the Betti numbers of the character variety for arbitrary coprime rank and degree.
There is a similar correspondence between the parabolic character variety and the moduli
space of parabolic Higgs bundles. The Poincar\'e polynomials of the moduli of parabolic 
Higgs bundles are given in \cite{by} for $G=\SL(2,\CC)$ and in \cite{garciaprada-gothen-munoz:2004} for
$G=\SL(3,\CC)$ and $\GL(3,\CC)$.

However, the homeomorphism $\cM^d(G) \isom \cH^d(G)$ is far from being an algebraic isomorphism, 
and hence the algebro-geometric information carried by these moduli spaces is different. Natural
refinements of the Poincar\'e polynomials which take into account the algebraic structure
are the E-polynomials (or Hodge-Deligne polynomials), which
encode the information of the dimensions of the Hodge decomposition of the cohomology in a nice way.
The fact that the E-polynomials of $\cM^d(G)$ and $\cH^d(G)$ have different natures has deep
implications  on the Mirror Symmetry phenomena exhibited in the non-abelian Hodge theory of a 
curve \cite{hausel-thaddeus:2003}.

Hausel and Rodriguez-Villegas started the computation of the E-polynomials of $G$-character
varieties focusing on $G=\GL(n,\CC), \SL(n,\CC)$ and $\PGL(n,\CC)$, using arithmetic methods inspired by the Weil
conjectures. In \cite{hausel-rvillegas:2007} they obtained the E-polynomials of $\cM^{d}(G)$
for $G=\GL(n,\CC)$ (i.e., for $C=e^{2\pi\imat d/n} \Id$), in terms of a simple generating function.
Following these methods, Mereb \cite{mereb:2010} studied this case for $\SL(n,\CC)$, giving
an explicit formula for the E-polynomial in the case $G=\SL(2,\CC)$, while for $\SL(n,\CC)$ 
these polynomials are given in terms of a generating function.
Also E-polynomials of the parabolic character varieties for $G=\GL(n,\CC)$ 
have been obtained by Hausel, Letellier and Rodriguez-Villegas, for semisimple conjugacy classes at the punctures
\cite{hausel-letellier-rvillegas:2011}.

Logares, Newstead and the second author introduced a geometric technique in \cite{lomune} 
to compute the E-polynomial of character varieties 
by using stratifications and also handling fibrations
which are locally trivial in the analytic topology but not in the Zariski topology. The
main results of \cite{lomune} are explicit formulas for the E-polynomials
of character varieties for $G=\SL(2,\CC)$ and $g=1,2$. Actually, the geometric technique
allows for dealing with character varieties in which the holonomy around the puncture
is not diagonalizable (in this case there is no correspondence with 
a Higgs bundle moduli space as mentioned above). In \cite{mamu}, the authors extend the
theory to compute the E-polynomial of the character variety for $G=\SL(2,\CC)$ and $g=3$,
and holonomy $\pm \Id$. Here we complete the general case $g\geq 3$ and any holonomy. 

We say that a variety $Z$ is of \textit{balanced type} (also called of Hodge-Tate type in the literature) if its mixed Hodge structure $H^{k,p,q}_c(Z)$ is non-zero only for $p=q$. Our first result is the following

\begin{thm}\label{thm:balanced}
All character varieties $\mathcal{M}_{C}(\sldos)$ are of balanced type.
\end{thm}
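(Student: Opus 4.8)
The plan is to build $\cM_{C}(\sldos)$ from elementary balanced pieces and to propagate balancedness through the two operations on which the geometric method of \cite{lomune} rests: additivity of the E-polynomial under stratification and its multiplicativity under fibrations. The elementary pieces are all of balanced type, since their E-polynomials are manifestly polynomials in $uv$: a point, the affine line $\CC$ (with $E=uv$), the torus $\CC^{*}$ (with $E=uv-1$), and the groups $\sldos$ and $\pgl$ (both with $E=(uv)^{3}-uv$). Products of balanced varieties are balanced; a variety stratified into balanced locally closed pieces is balanced, because $E$ is additive over a stratification; and the quotient of a balanced variety by a finite group is balanced, because its compactly supported cohomology is the invariant sub-mixed-Hodge-structure, which stays concentrated in bidegrees $p=q$.

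The decisive closure property is multiplicativity of $E$ for fibrations that are only locally trivial in the analytic topology, not in the Zariski topology, as is typical here. I would invoke the principle that a fibration $F\to Z\to B$ satisfies $E(Z)=E(B)\,E(F)$ whenever its structure group is a connected algebraic group---in our case $\pgl$ or a torus---acting algebraically on $F$. Connectedness forces the monodromy action of $\pi_{1}(B)$ on $H^{*}_{c}(F)$ to be trivial, so the associated Leray spectral sequence degenerates and the mixed Hodge structure of $Z$ is the tensor product of those of $B$ and $F$; in particular $Z$ is balanced as soon as $B$ and $F$ are.

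With these tools I would stratify $\cM_{C}(\sldos)$ by the reducibility type of the representation. On the reducible locus each tuple is conjugate into the upper-triangular subgroup, so the stratum is controlled by abelian data and reduces, after finite quotients, to products of copies of $\CC^{*}$ and $\CC$, which are balanced. The irreducible locus is a $\pgl$-quotient of an open subset of the representation variety $\{(A_{1},B_{1},\ldots,A_{g},B_{g}): \prod_{i=1}^{g}[A_{i},B_{i}]=C\}$, and it is presented as an iterated fibration---through the product-of-commutators map together with the analytically-locally-trivial fibrations above---over products of $\sldos$ and tori with balanced fibers. Passing among the three possibilities for $C$ (central, diagonalizable, and Jordan type) changes only the combinatorics of the strata and the groups $\Stab(C)$, not their balanced nature. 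By additivity and the multiplicativity just described, every stratum is of balanced type, hence so is $\cM_{C}(\sldos)$.

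The main obstacle is exactly the multiplicativity step for the analytically-locally-trivial fibrations: a priori the monodromy could mix Hodge types and create classes with $p\neq q$, destroying balancedness, so the crux is to realize each such fibration with a connected structure group, or else to check directly that its monodromy acts trivially on the cohomology of the fiber. Once this is secured for every fibration in the stratification, the conclusion is formal. As an independent check one could bypass the geometry via Katz's theorem \cite{hausel-rvillegas:2007}: it suffices that $\cM_{C}(\sldos)$ be polynomial-count over finite fields, which follows by stratification from Frobenius' commutator-counting formula and the fact that the character table of $\SL(2,\mathbb{F}_{q})$ is polynomial in $q$, yielding $E=P(uv)$ and balancedness at one stroke.
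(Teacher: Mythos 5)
Your scaffolding (balanced building blocks, additivity under stratification, finite quotients, products) matches the paper's, but the step you yourself single out as the crux is a genuine gap, and it cannot be repaired in the way you propose. The fibrations that actually arise in the stratification, namely $\oX_4^g \to \CC-\{0,\pm 1\}$ (fibre over $\lambda$ the representation space with holonomy $\xi_\lambda$) and its quotient $\oX_4^g/\ZZ_2 \to \CC-\{\pm 2\}$, do \emph{not} have trivial monodromy and are not associated to a connected structure group acting on the fibre. Already for $g=1$ the paper records $R(\oX_4/\ZZ_2)=q^3T-3qS_2+3q^2S_{-2}-S_0$: the non-zero coefficients of $S_{\pm 2}$ and $S_0$ mean precisely that $\pi_1(\CC-\{\pm2\})$ acts non-trivially on $H^*_c$ of the fibre (the monodromy group is $\ZZ_2\times\ZZ_2$), so $e(Z)\neq e(B)e(F)$ for these fibrations --- compare formula (\ref{eqn:dos}), where $e(Z)=(q-2)a-(b+c+d)$ rather than $(q-2)(a+b+c+d)$. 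Hence your ``decisive closure property'' simply does not apply to the fibrations that matter, and you cannot ``check directly that the monodromy acts trivially,'' because it does not. What replaces it in the paper is Theorem \ref{thm:general-fibr} (Theorem 2 of \cite{mamu}): for finite abelian monodromy, balancedness of the fibre together with balancedness of the trivializing cover $B_\rho$ (here a rational curve) still forces the total space to be balanced, with $e(Z)$ computed from the Hodge monodromy representation via the linear map $e:R(\Gamma)[q]\to\ZZ[q]$. Moreover, balancedness is not obtained by a soft formal argument: Proposition \ref{prop:balanced} proves it by induction on the genus \emph{simultaneously} with the computation of the Hodge monodromy representations $R(\oX_4^k/\ZZ_2)=a_kT+b_kS_2+c_kS_{-2}+d_kS_0$, since one needs Lemma \ref{lem:RR} (the genus-$(g-1)$ monodromy is of order two) to control the genus-$g$ strata. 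Your argument, which never computes these representations, has no way to close this loop.

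Two secondary remarks. First, for $\cM_{\Id}$ one must handle the GIT quotient and S-equivalence; the paper does this by computing the reducible locus explicitly (Theorem \ref{thm:id}) and then passing to the $\PGL(2,\CC)$-quotient of the irreducible locus via the homotopy fibration $I\to I/\PGL(2,\CC)\to B\PGL(2,\CC)$; your reducible/irreducible split is consistent with this but is the easy part. Second, your fallback via Katz's polynomial-count criterion from \cite{hausel-rvillegas:2007} is a genuinely different route in spirit (it is essentially how \cite{mereb:2010} treats $\cM_{-\Id}$), but as stated it also has gaps: the character table of $\SL(2,\mathbb{F}_q)$ has non-polynomial (Gauss-sum) entries at the unipotent classes, so one must argue that these contributions combine polynomially in the Frobenius sum; and polynomial count of the representation variety does not automatically descend to the GIT quotient when strictly semistable (reducible) points are present, as happens for $C=\Id$ and $C=J_\pm$. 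So neither of your two routes, as written, establishes the theorem.
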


The main result of the paper is the computation of the E-polynomials of the character varietes for $\sldos$ and any genus.
\begin{thm}\label{thm:main}
  Let $X$ be a complex curve of genus $g\geq 1$. Let $\cM_C=\cM_C(\SL(2,\CC))$ be the
 character variety corresponding to $C\in \SL(2,\CC)$. The E-polynomials are as follows:
\begin{align*}
e(\mathcal{M}_{\Id})=  & \, (q^{3}-q)^{2g-2}+(q^{2}-1)^{2g-2}-q(q^{2}-q)^{2g-2}-2^{2g}q^{2g-2} \\
& +\frac{1}{2}q^{2g-2}(q+2^{2g}-1)((q+1)^{2g-2}+(q-1)^{2g-2}) + \frac{1}{2}q((q+1)^{2g-1}+(q-1)^{2g-1}) \\
\end{align*}
\begin{align*}
e(\mathcal{M}_{-\Id}) = & \,  (q^3-q)^{2g-2}+(q^2-1)^{2g-2}-2^{2g-1}(q^{2}+q)^{2g-2}+(2^{2g-1}-1)(q^{2}-q)^{2g-2} \\
e(\mathcal{M}_{J_{+}}) = & \, (q^{3}-q)^{2g-2}(q^{2}-1)+(2^{2g-1}-1)(q-1)(q^{2}-q)^{2g-2}-2^{2g-1}(q+1)(q^{2}+q)^{2g-2}\\
& +\frac{1}{2}q^{2g-2}(q-1)\left((q-1)^{2g-1}-(q+1)^{2g-1} \right) \\
e(\mathcal{M}_{J_{-}}) =& \, (q^{3}-q)^{2g-2}(q^{2}-1)+(2^{2g-1}-1)(q-1)(q^{2}-q)^{2g-2}+2^{2g-1}(q+1)(q^{2}+q)^{2g-2} \\
e(\mathcal{M}_{\xi_{\lambda}}) = & \,  (q^{3}-q)^{2g-2}(q^{2}+q)+(q^{2}-1)^{2g-2}(q+1)+(2^{2g}-2)(q^{2}-q)^{2g-2}q,
\end{align*}
for $J_+=\begin{pmatrix} 1& 1\\ 0 &1\end{pmatrix}$,
$J_-=\begin{pmatrix} - 1& 1\\ 0 &- 1\end{pmatrix}$ and
$\xi_\lambda=\begin{pmatrix} \lambda & 0\\ 0 &\lambda^{-1}\end{pmatrix}$, $\lambda\neq 0,\pm 1$,
and with $q=\, uv$. 
\end{thm}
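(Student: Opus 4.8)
The plan is to compute each $e(\cM_C)$ by stratifying the representation variety $W_C=\{(A_1,B_1,\dots,A_g,B_g)\in\sldos^{2g}\mid \prod_{i=1}^{g}[A_i,B_i]=C\}$ and then passing to the GIT quotient $\cM_C=W_C/\!\!/\Stab(C)$, using throughout that E-polynomials are additive over stratifications and multiplicative over fibrations which are locally trivial in the analytic topology, \emph{provided} the monodromy acts trivially on the relevant cohomology. Theorem~\ref{thm:balanced} guarantees that every piece is of balanced type, so all E-polynomials are genuine polynomials in $q=uv$, which is exactly what makes these fibration arguments applicable. The first reduction is to split $W_C$ into its reducible and irreducible loci. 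Since a commutator of upper-triangular matrices is unipotent upper-triangular, a product of commutators of matrices sharing a common eigenvector is again unipotent; hence reducible representations occur only for $C$ conjugate to $\Id$ or to $J_+$, which already explains why the formulas for $\cM_{-\Id}$, $\cM_{J_-}$ and $\cM_{\xi_\lambda}$ are the simplest.

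For the irreducible locus I would set up a recursion on the genus. Writing $\mu_g$ for the product-of-commutators map $\sldos^{2g}\to\sldos$ and $\kappa(A,B)=[A,B]$ for a single commutator, the fibre $\mu_g^{-1}(C)$ fibres over the value $D=\prod_{i=1}^{g-1}[A_i,B_i]\in\sldos$ with fibre $\mu_{g-1}^{-1}(D)\times\kappa^{-1}(D^{-1}C)$. Stratifying the target $\sldos$ into its five conjugacy types ($\Id$, $-\Id$, the class of $J_+$, the class of $J_-$, and the regular semisimple locus) and recording how the type of $D^{-1}C$ varies, this produces a closed linear system of recursions for the five quantities $f_g(C):=e(\mu_g^{-1}(C))$, one per type, whose base cases are the single-commutator computations of \cite{lomune} and \cite{mamu}. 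Solving the recursion in $g$ yields the leading contributions of the form $(q^3-q)^{2g-2}(\cdots)$ seen in the statement. Passing to the moduli space then amounts to dividing the irreducible part by $e(\pgl)=q^3-q$, since $\pgl$ acts freely there; the disconnected stabilizer at $C=J_\pm$ and the role of the center must be tracked with care at this step. The reducible locus, relevant only for $C\in\{\Id,J_+\}$, I would compute by conjugating into the Borel subgroup: the diagonal parts give a representation into the maximal torus $T\cong\CC^*$, i.e. a point of $(\CC^*)^{2g}$ constrained by the relation, while the off-diagonal parts contribute an affine-space factor, and one must finally quotient by the Weyl group $\ZZ_2$ exchanging the two eigenlines.

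The hardest step is the evaluation of the recursion over the regular semisimple stratum. There the family of conjugacy classes over the trace line $t=\lambda+\lambda^{-1}$ carries a nontrivial $\ZZ_2$-monodromy exchanging $\lambda$ and $\lambda^{-1}$, so the E-polynomial is \emph{not} the naive product of base and fibre: the regular semisimple locus of $\sldos$ has $e=q^3-2q^2-q$ rather than $(q-2)(q^2+q)$, the discrepancy $-(q^2-q)$ being exactly the anti-invariant contribution. Controlling these monodromy corrections at every stage — decomposing each such fibration into its $\ZZ_2$-invariant and anti-invariant summands and assigning the correct Hodge weights to the latter, following the geometric technique of \cite{lomune} — is the main obstacle, and the same $\ZZ_2$-bookkeeping recurs in the Weyl quotient of the reducible locus. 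Once all five recursions are solved and the reducible contributions added, collecting terms and simplifying with $q=uv$ gives the stated polynomials.
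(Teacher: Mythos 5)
Your overall architecture is the same as the paper's: peel off one handle at a time, stratify by the conjugacy type of the intermediate product $D=\prod_{i=1}^{g-1}[A_i,B_i]$, feed in the genus~$1$ (and genus~$2$) computations of \cite{lomune} as base cases, and treat the reducible locus separately for $C=\Id$ (your observation that reducibles only occur for $C$ unipotent is correct and is why $\cM_{-\Id},\cM_{J_-},\cM_{\xi_\lambda}$ need no GIT analysis). However, there is a genuine gap in your computational engine: the linear recursion you propose, in the five quantities $f_g(C)=e(\mu_g^{-1}(C))$, does not close. The problem is the regular semisimple stratum. When the semisimple strata of two pieces are glued over the common trace parameter, the E-polynomial of the resulting space is \emph{not} a function of the E-polynomials (nor of the $\ZZ_2$-invariant/anti-invariant parts) of the two families: it is given by the tensor product of their Hodge monodromy representations over the base $\CC-\{\pm 2\}$, whose monodromy group is $\ZZ_2\x\ZZ_2$ (loops around $t=2$ and $t=-2$), not $\ZZ_2$. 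Concretely, one must carry the four coefficients of $R(\oX_4^g/\ZZ_2)=a_gT+b_gS_2+c_gS_{-2}+d_gS_0$, and the gluing formula mixes all four (e.g.\ the coefficient of $T$ in the product is $a_ka_h+b_kb_h+c_kc_h+d_kd_h$). Your proposed bookkeeping --- a single $\ZZ_2$ splitting into invariant and anti-invariant summands --- retains only two of these four polynomials and cannot distinguish monodromy around $t=2$ from monodromy around $t=-2$, so the recursion cannot be iterated. This is why the paper's recursion is $8$-dimensional, $v_g=Mv_{g-1}$ with $v_g=(e_0^g,e_1^g,e_2^g,e_3^g,a_g,b_g,c_g,d_g)^t$, and the closed formulas come from diagonalizing the $8\x 8$ matrix $M$.

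A second, related gap: even granting the correct bookkeeping, the four coefficients $a_g,b_g,c_g,d_g$ are not obtained by a direct fibration computation. The direct analysis of the semisimple stratum only determines $R(\oX_4^g)$, i.e.\ the two combinations $a_g+d_g$ and $b_g+c_g$ (the pullback under the double cover $\CC-\{0,\pm1\}\to\CC-\{\pm2\}$ kills the distinction between $T,S_0$ and between $S_2,S_{-2}$). The paper extracts the two missing equations by a bootstrapping argument: computing $e_0^{g+1}$ and $e_1^{g+1}$ in two different ways (splitting $g+1=g+1$ and $g+1=(g-1)+2$) and equating the answers. Without this step --- or some substitute for it --- you cannot solve for $a_g,b_g,c_g,d_g$ individually, and in particular you cannot obtain $e(\cM_{\xi_\lambda})=(a_g+b_g+c_g+d_g)/(q-1)$, which is the E-polynomial of a single fiber $\oX_{4,\lambda}^g$ rather than of the whole family. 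Finally, a minor point: dividing by $e(\pgl)=q^3-q$ is correct only for $C=\pm\Id$; for $J_\pm$ one divides the fixed-monodromy space by $e(\Stab)=q$, and for $\xi_\lambda$ by $q-1$ (equivalently, saturate by conjugation first and then divide by $q^3-q$) --- you flag this but it should be made precise.
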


This generalizes the formulas of  \cite{lomune} for $g=1,2$ (the formula for $e(\cM_{\Id})$ for $g=2$ in Theorem 1.2 of
\cite{lomune} has a misprint; the polynomial should be $e(\cM_{\Id})=q^6+ 17q^4+q^2+ 1$, 
which is correctly written in Section 8 of \cite{lomune}).  It also generalizes the formulas in \cite{mamu} for 
$e(\cM_{\Id})$ and $e(\cM_{-\Id})$ for $g=3$. The formula for $e(\cM_{-\Id})$ and any $g$ coincides with
that of \cite{mereb:2010}. The other E-polynomials are new, including the case of parabolic $\sldos$-character varieties $\mathcal{M}_{\xi_\lambda}$.

We use the information for the cases $g=1,2$ from \cite{lomune} 
as building blocks. The E-polynomial for the character variety for a curve $X_g$  of
genus $g\geq 3$ will be computed inductively. The basic idea, as will be clear throughout the paper,
is to decompose $X_g=X_{g-1}\# X_1$ as a connected sum. From the information for
$X_g$ one gets information for $X_{g-1}$ with a hole, and this is used in turn to 
compute the E-polynomial corresponding to $X_{g+1}=X_{g-1}\# X_2$. The
E-polynomials of $X_1,X_2$ with a puncture from \cite{lomune} come into play here.

This induction has as starting point the curve $X_3$ of genus $g=3$ (with no
puncture), which is computed in \cite{mamu}. This special case has its special
features, and has to be treated separately. In particular, in that case the 
techniques to compute E-polynomials of analytically locally trivial fibrations use
a base of dimension $2$. In all other cases ($g=1,2$ in \cite{lomune}, and 
the induction for $g\geq 4$ treated here) a base of dimension $1$ suffices.

Theorem \ref{thm:main} allows us to prove the following relation 
of the E-polynomials of various character varieties, conjectured by T. Hausel.

\begin{cor} \label{cor:Hausel}
For any genus $g\geq 1$, we have
$$
e(\mathcal{M}_{J_{-}})+(q+1)e(\mathcal{M}_{-\Id})=e(\mathcal{M}_{\xi_{\lambda}}).
$$
\end{cor}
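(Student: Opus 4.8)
The plan is to prove the identity by direct substitution of the explicit E-polynomials furnished by Theorem \ref{thm:main}, since all three varieties $\mathcal{M}_{J_-}$, $\mathcal{M}_{-\Id}$ and $\mathcal{M}_{\xi_\lambda}$ appear there with closed formulas. The proof therefore reduces to a purely algebraic verification of a polynomial identity in $q$ (with parameter $g$), and no further geometry is required.

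First I would organize the computation by collecting terms according to the four monomial building blocks $(q^3-q)^{2g-2}$, $(q^2-1)^{2g-2}$, $(q^2+q)^{2g-2}$ and $(q^2-q)^{2g-2}$ that appear across the relevant formulas. Writing $(q+1)\,e(\mathcal{M}_{-\Id})$ out explicitly and adding it to $e(\mathcal{M}_{J_-})$, the coefficient of $(q^3-q)^{2g-2}$ on the left becomes $(q^2-1)+(q+1)=q^2+q$, matching the leading term of $e(\mathcal{M}_{\xi_\lambda})$. Similarly, the $(q^2-1)^{2g-2}$ contribution arises only from $(q+1)\,e(\mathcal{M}_{-\Id})$, giving coefficient $(q+1)$ exactly as in $e(\mathcal{M}_{\xi_\lambda})$.

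The \emph{key cancellation} occurs for the block $(q^2+q)^{2g-2}$: the contribution $2^{2g-1}(q+1)(q^2+q)^{2g-2}$ from $e(\mathcal{M}_{J_-})$ is cancelled precisely by the term $-(q+1)2^{2g-1}(q^2+q)^{2g-2}$ coming from $(q+1)\,e(\mathcal{M}_{-\Id})$, so this monomial disappears entirely, consistent with its absence from $e(\mathcal{M}_{\xi_\lambda})$. Finally, the $(q^2-q)^{2g-2}$ terms combine as $(2^{2g-1}-1)\big[(q-1)+(q+1)\big](q^2-q)^{2g-2}=(2^{2g-1}-1)\,2q\,(q^2-q)^{2g-2}=(2^{2g}-2)q(q^2-q)^{2g-2}$, which is exactly the last term of $e(\mathcal{M}_{\xi_\lambda})$.

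Since every monomial block matches, the identity follows. The main (and essentially only) obstacle is careful bookkeeping of the powers of $2$ and the signs, in particular ensuring that the prefactor $(q+1)$ is distributed correctly across the four summands of $e(\mathcal{M}_{-\Id})$ and that the two $2^{2g-1}(q+1)(q^2+q)^{2g-2}$ contributions are seen to have opposite signs. Once the summands are aligned by their common monomial blocks, the verification is immediate.
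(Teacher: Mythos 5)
Your proof is correct: substituting the closed formulas from Theorem \ref{thm:main} and grouping by the blocks $(q^3-q)^{2g-2}$, $(q^2-1)^{2g-2}$, $(q^2+q)^{2g-2}$, $(q^2-q)^{2g-2}$ does verify the identity, with the cancellation $2^{2g-1}(q+1)(q^2+q)^{2g-2}-(q+1)2^{2g-1}(q^2+q)^{2g-2}=0$ and the combination $(2^{2g-1}-1)\left[(q-1)+(q+1)\right]=(2^{2g}-2)q$ exactly as you describe; and since the paper's derivation of Theorem \ref{thm:main} nowhere uses the corollary, there is no circularity. However, this is a genuinely different route from the paper's own proof. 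The paper proves the corollary \emph{before} it has any closed formulas: it rewrites the claim in terms of the unquotiented representation spaces, using $e(\mathcal{M}_{-\Id})=e_1^g/(q^3-q)$, $e(\mathcal{M}_{J_-})=e_3^g/q$ and $e(\mathcal{M}_{\xi_\lambda})=(a_g+b_g+c_g+d_g)/(q-1)$, so that the identity becomes $(q^2-1)e_3^g+(q+1)e_1^g=(q^2+q)(a_g+b_g+c_g+d_g)$, and then proves this by induction on $g$ via the $8\times 8$ transfer matrix $M$ of (\ref{matrixg-1-->g}): the ``defect'' at genus $g$ turns out to equal $q^2(q-1)^2$ times the defect at genus $g-1$, hence vanishes inductively. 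The paper's argument buys independence from the diagonalization of $M$ (it exhibits the identity as a linear relation preserved by the recursion, which is structurally informative), whereas your argument buys brevity and transparency at the cost of invoking the full strength of the closed-form Theorem \ref{thm:main}. One small caution for your write-up: you should note explicitly that the verification also covers $g=1$, where $2g-2=0$ and all four blocks degenerate to $1$, since the block-matching argument is then a statement about constants rather than distinct monomials; the matching of coefficients still gives the correct identity $q^2+3q+(q+1)\cdot 1=q^2+4q+1$, but it is worth saying so.
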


As a byproduct of our analysis, we also obtain the behaviour of the E-polynomial of the parabolic character
variety ($G=\SL(2,\CC)$)
 $$
  \cM_{\xi_\lambda}=\cM_{\xi_\lambda} (G)= \{(A_{1},B_{1},\ldots,A_{g},B_{g}) \in G^{2g} \,| \prod_{i=1}^{g}[A_{i},B_{i}]=
  \begin{pmatrix} \lambda & 0 \\ 0 &\lambda^{-1} \end{pmatrix} \}/ / G,
 $$
when $\lambda$ varies in $\CC-\{0,\pm 1\}$. This is of relevance for Mirror Symmetry phenomena. It is given by the following formula.

\begin{thm}
Let $X$ be a curve of genus $g\geq 1$. Then
  \begin{equation*}\label{eqn:RcM}
R(\mathcal{M}_{\xi_{\lambda}})= \left( (q^{3}-q)^{2g-2}(q^{2}+q)+(q+1)(q^{2}-1)^{2g-2}-q(q^{2}-q)^{2g-2} \right) T 
+ \left( (2^{2g}-1)q(q^{2}-q)^{2g-2} \right) N,
  \end{equation*}
which means that the E-polynomial of the invariant part of the cohomology is the polynomial accompanying $T$,
and the  E-polynomial of the non-invariant part is the polynomial accompanying $N$.  
\end{thm}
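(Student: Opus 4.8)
The plan is to read $R(\mathcal{M}_{\xi_{\lambda}})$ as the refinement of the ordinary E-polynomial $e(\mathcal{M}_{\xi_{\lambda}})$ recorded by the natural $\ZZ_{2}$-action coming from the two ways of diagonalizing the holonomy. The conjugacy class of $\xi_{\lambda}$ depends only on the trace $\lambda+\lambda^{-1}$, so $\xi_{\lambda}$ and $\xi_{\lambda^{-1}}$ are conjugate via the Weyl element $w=\begin{pmatrix} 0 & 1 \\ -1 & 0\end{pmatrix}$, with $w\,\xi_{\lambda}\,w^{-1}=\xi_{\lambda^{-1}}$. As $\lambda$ runs over $\CC-\{0,\pm1\}$ this presents a double cover of the trace line with deck involution $\sigma:\lambda\mapsto\lambda^{-1}$, and conjugation by $w$ identifies the fibre over $\lambda$ with the fibre over $\lambda^{-1}$; hence the deck group $\ZZ_{2}$ acts on the cohomology $H^{*}_{c}(\mathcal{M}_{\xi_{\lambda}})$ of a fixed fibre, through monodromy together with the $w$-identification. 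Writing $T$ and $N$ for the trivial and sign characters, $R(\mathcal{M}_{\xi_{\lambda}})$ is by definition the E-polynomial of the $\sigma$-invariant part of the cohomology times $T$ plus the E-polynomial of the non-invariant part times $N$. The first consistency check is that the two coefficients add up to $e(\mathcal{M}_{\xi_{\lambda}})$ of Theorem \ref{thm:main}; indeed
\[
\big((q^{3}-q)^{2g-2}(q^{2}+q)+(q+1)(q^{2}-1)^{2g-2}-q(q^{2}-q)^{2g-2}\big)+(2^{2g}-1)q(q^{2}-q)^{2g-2}=e(\mathcal{M}_{\xi_{\lambda}}),
\]
so it remains only to separate the two pieces.

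To carry out the separation I would rerun the stratification used to prove Theorem \ref{thm:main}, carrying the $\sigma$-equivariant structure through every step. Since $\lambda\neq 0,\pm1$ forces every representation in $\mathcal{M}_{\xi_{\lambda}}$ to be irreducible (the diagonal part of $\prod[A_{i},B_{i}]$ of a reducible representation is unipotent, hence never equal to $\xi_{\lambda}$), the quotient is geometric and the invariant/non-invariant splitting of the cohomology is governed stratum by stratum by the induced monodromy. The strata on which $\sigma$ acts trivially assemble into the $T$-coefficient, which I expect to be exactly $(q^{3}-q)^{2g-2}(q^{2}+q)+(q+1)(q^{2}-1)^{2g-2}-q(q^{2}-q)^{2g-2}$, the negative term being an inclusion--exclusion correction in the stratification. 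The non-invariant coefficient $(2^{2g}-1)q(q^{2}-q)^{2g-2}$ should arise from the locus on which the eigenvalue swap genuinely interchanges the two eigenlines: here the factor $2^{2g}$ records the $H^{1}(X,\ZZ_{2})=(\ZZ_{2})^{2g}$ worth of sign-twists acting on representations, while $q(q^{2}-q)^{2g-2}$ is the E-polynomial of the associated torus-type stratum that $\sigma$ sends into its sign representation. The delicate point is the precise allotment of the repeated factor $q(q^{2}-q)^{2g-2}$, namely $-1$ copy to $T$ and $2^{2g}-1$ copies to $N$, which is exactly what the identity above pins down.

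Because the whole scheme is inductive in the genus -- writing $X_{g}=X_{g-1}\#X_{1}$ and gluing along a boundary whose holonomy is $\xi_{\lambda}$ -- I would in fact compute $R(\mathcal{M}_{\xi_{\lambda}})$ \emph{simultaneously} with the $e(\mathcal{M}_{C})$ of Theorem \ref{thm:main}, since only the $\sigma$-invariant ($T$) part survives the integration over the $\lambda$-parameter in the gluing step, and it is precisely this invariant part that feeds the induction. The main obstacle, as I see it, is bookkeeping the $\ZZ_{2}$-equivariance across the fibrations that are only analytically (not Zariski) locally trivial: the E-polynomial technique of \cite{lomune} for such fibrations has to be upgraded so that the $T$- and $N$-parts are each multiplicative under the fibration and the $\sigma$-action is respected fibrewise. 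Once that equivariant version of the fibration argument is in place, the two coefficients follow by the same elementary manipulations used for the main theorem, and matching against Theorem \ref{thm:main} determines the $N$-part uniquely.
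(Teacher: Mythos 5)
Your proposal has two problems, one interpretive and one logical, and the second is fatal.

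First, the interpretation. In the paper, $R(\mathcal{M}_{\xi_{\lambda}})$ is the Hodge monodromy representation of the family $\{\mathcal{M}_{\xi_{\lambda}}\}$ fibered over $\lambda\in\CC-\{0,\pm 1\}$: ``invariant'' means invariant under the monodromy action of $\pi_{1}(\CC-\{0,\pm1\})$, and $N$ is the order-two representation which is non-trivial around $\lambda=0$. This is not the splitting under the Weyl/deck involution $\sigma:\lambda\mapsto\lambda^{-1}$ that you propose. The two genuinely differ: writing $R(\oX_{4}^{g}/\ZZ_{2})=a_{g}T+b_{g}S_{2}+c_{g}S_{-2}+d_{g}S_{0}$ as in the paper, the monodromy-invariant part of the fibre cohomology is $a_{g}+d_{g}$ (both $T$ and $S_{0}$ pull back trivially under the double cover $\CC-\{0,\pm1\}\to\CC-\{\pm2\}$), whereas invariance under the group generated by monodromy together with the deck identification cuts out only $a_{g}$; since $d_{g}=(q^{2}-1)^{2g-1}-\frac{1}{2}q^{2g-1}\left((q+1)^{2g-1}+(q-1)^{2g-1}\right)\neq 0$, your splitting would compute a different quantity than the one asserted.

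Second, and more seriously, you never actually determine the splitting. Your plan to ``rerun the stratification carrying the $\sigma$-equivariant structure'' is precisely the machinery the paper already runs (the inductive computation of the Hodge monodromy representations $R(\oX_{4}^{k}/\ZZ_{2})$ through Sections 3--8), so it is not an alternative route, and you do not carry it out. Instead you assert that the allotment of the term $q(q^{2}-q)^{2g-2}$ ($-1$ copy to $T$, $2^{2g}-1$ copies to $N$) ``is exactly what the identity above pins down,'' i.e.\ that matching the sum of the two coefficients against $e(\mathcal{M}_{\xi_{\lambda}})$ from Theorem \ref{thm:main} determines the $N$-part uniquely. It does not: the sum constraint fixes only the total, and any redistribution between $T$ and $N$ preserving that total is equally consistent, so this argument cannot distinguish the claimed answer from, say, assigning all of $(2^{2g}-2)q(q^{2}-q)^{2g-2}$ to $T$. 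What actually pins the coefficients down in the paper is Proposition \ref{Polynomialseig}: the closed formulas for $a_{g},b_{g},c_{g},d_{g}$ obtained by diagonalizing the $8\times 8$ transfer matrix $M$ and computing $M^{g}v_{0}$; these give $R(\oX_{4}^{g})=(a_{g}+d_{g})T+(b_{g}+c_{g})N=\left((q^{3}-q)^{2g-1}+(q^{2}-1)^{2g-1}-(q^{2}-q)^{2g-1}\right)T+\left((2^{2g}-1)(q^{2}-q)^{2g-1}\right)N$, and one concludes by dividing by $e(\Stab(\xi_{\lambda}))=q-1$, using that the diagonal torus acts freely because (as you correctly note) every such representation is irreducible. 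Without those closed formulas, or some genuinely new computation of the monodromy, your proposal does not prove the statement; the consistency check with Theorem \ref{thm:main} is necessary but far from sufficient.
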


We end up giving some consequences of Theorem \ref{thm:main} in Section \ref{sec:topol}. Notably, 
\begin{cor}
The E-polynomials of $e(\mathcal{M}_{-\Id})$, $e(\mathcal{M}_{\xi_{\lambda}})$ are palindromic.
\end{cor}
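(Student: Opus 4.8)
The plan is to deduce palindromicity directly from the closed formulas of Theorem~\ref{thm:main}. Recall that a polynomial $P(q)=\sum_{i=0}^{d}a_{i}q^{i}$ with $a_{d}\neq 0$ is \emph{palindromic} exactly when $q^{d}P(1/q)=P(q)$, that is $a_{i}=a_{d-i}$ for all $i$. So for each of $e(\mathcal{M}_{-\Id})$ and $e(\mathcal{M}_{\xi_{\lambda}})$ I would first read off the top degree $d$, then check that applying $q\mapsto 1/q$ and multiplying by $q^{d}$ returns the same polynomial. Two preliminary observations make the bookkeeping transparent: first, $2g-2$ is even, so any sign produced by replacing $(1-q)$ or $(1-q^{2})$ by $(q-1)$ or $(q^{2}-1)$ disappears; second, every building block factors as a monomial times a power of $(q\pm 1)$, namely $(q^{3}-q)^{2g-2}=q^{2g-2}(q^{2}-1)^{2g-2}$ and $(q^{2}\pm q)^{2g-2}=q^{2g-2}(q\pm 1)^{2g-2}$.

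For $e(\mathcal{M}_{-\Id})$ the degree is $d=6g-6$, attained only by $(q^{3}-q)^{2g-2}$ with leading coefficient $1$, while the constant term $1$ is contributed only by $(q^{2}-1)^{2g-2}$; thus there is no cancellation at either end and $d$ is genuinely the degree. I would then record the term-by-term transformation laws under $P\mapsto q^{d}P(1/q)$: one has $q^{6g-6}\bigl((1/q)^{3}-1/q\bigr)^{2g-2}=(q^{2}-1)^{2g-2}$ and symmetrically $q^{6g-6}\bigl((1/q)^{2}-1\bigr)^{2g-2}=(q^{3}-q)^{2g-2}$, so the first two summands are interchanged; and $q^{6g-6}\bigl((1/q)^{2}\pm 1/q\bigr)^{2g-2}=(q^{2}\pm q)^{2g-2}$, so each of the last two summands is fixed. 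Adding up gives $q^{6g-6}e(\mathcal{M}_{-\Id})(1/q)=e(\mathcal{M}_{-\Id})(q)$.

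For $e(\mathcal{M}_{\xi_{\lambda}})$ the degree is $d=6g-4$, attained only by the first summand, again with no cancellation against the constant term. The same manipulation shows that the first two summands $(q^{3}-q)^{2g-2}(q^{2}+q)$ and $(q^{2}-1)^{2g-2}(q+1)$ are exchanged by $P\mapsto q^{6g-4}P(1/q)$ --- both become $q^{2g-1}(q^{2}-1)^{2g-2}(q+1)$ after the operation --- while the third summand $(2^{2g}-2)\,q\,(q^{2}-q)^{2g-2}$ is sent to itself. Hence this polynomial is palindromic as well, and the corollary follows.

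The computation is routine once Theorem~\ref{thm:main} is in hand; the only care needed is the degree bookkeeping ensuring $d$ is the genuine degree (no leading or trailing cancellation) and the parity of $2g-2$. So I do not expect a serious obstacle, and the content worth flagging is rather the structural reason these two cases are singled out. For monodromy $-\Id$, and for $\xi_{\lambda}$ with $\lambda\neq 0,\pm 1$, no reducible representations occur, so $\mathcal{M}_{-\Id}$ and $\mathcal{M}_{\xi_{\lambda}}$ are smooth, of complex dimensions $6g-6$ and $6g-4$ --- precisely the degrees of their E-polynomials --- whereas $\mathcal{M}_{\Id}$ and $\mathcal{M}_{J_{\pm}}$ contain reducibles and are singular. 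Smoothness makes Poincar\'e duality available, which is the geometric source of a symmetry relating the top and bottom of the E-polynomial; however, turning this into a self-contained proof of palindromicity would require separately comparing the compactly supported and ordinary E-polynomials (as the example of $\mathbb{C}^{*}$ shows, smoothness alone yields only a duality, not palindromicity). I would therefore keep the duality picture as motivation and present the explicit substitution above as the clean route to the stated result.
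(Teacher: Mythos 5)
Your proof is correct and follows essentially the same route as the paper, which likewise verifies $q^{d}P(q^{-1})=P(q)$ directly on the closed formulas (the paper runs the check on the invariant and non-invariant parts $A$ and $B$ of $R(\mathcal{M}_{\xi_{\lambda}})$ separately, whose sum is $e(\mathcal{M}_{\xi_{\lambda}})$, and cites Mereb for the analogous computation for $\mathcal{M}_{-\Id}$). One trivial slip worth fixing: under $P\mapsto q^{6g-4}P(1/q)$ the summands $(q^{3}-q)^{2g-2}(q^{2}+q)$ and $(q^{2}-1)^{2g-2}(q+1)$ are indeed exchanged, but they do not both map to $q^{2g-1}(q^{2}-1)^{2g-2}(q+1)$; rather the first becomes the second and the second becomes the first, which equals $q^{2g-1}(q^{2}-1)^{2g-2}(q+1)$ --- your conclusion is unaffected.
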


\medskip

The present arguments can be used to compute the E-polynomials of the $\PGL(2,\CC)$-character varieties of 
surface groups for arbitrary genus, which will appear in \cite{ma}.

\noindent\textbf{Acknowledgements.} 
We would like to thank Peter Newstead, Tam\'as Hausel, Martin Mereb, Nigel Hitchin, Marina Logares and Oscar Garc\'{\i}a-Prada for useful conversations.
This work has been partially supported by MICINN (Spain) Project MTM2010-17389. The first author was also supported by a FPU scholarship from the Spanish Ministerio de Educaci\'on.

\section{E-polynomials}\label{sec:method}

We start by  giving the definition of E-polynomials. 
A pure Hodge structure of weight $k$ consists of a finite dimensional complex vector space
$H$ with a real structure, and a decomposition $H=\bigoplus_{k=p+q} H^{p,q}$
such that $H^{q,p}=\overline{H^{p,q}}$, the bar meaning complex conjugation on $H$.
A Hodge structure of weight $k$ gives rise to the so-called Hodge filtration, which is a descending filtration
$F^{p}=\bigoplus_{s\ge p}H^{s,k-s}$. We define $\Gr^{p}_{F}(H):=F^{p}/ F^{p+1}=H^{p,k-p}$.
A mixed Hodge structure consists of a finite dimensional complex vector space $H$ with a real structure,
an ascending (weight) filtration $\ldots \subset W_{k-1}\subset W_k \subset \ldots \subset H$
(defined over $\RR$) and a descending (Hodge) filtration $F$ such that $F$ induces a pure Hodge structure
of weight $k$ on each $\Gr^{W}_{k}(H)=W_{k}/W_{k-1}$. We define
 $$
 H^{p,q}:= \Gr^{p}_{F}\Gr^{W}_{p+q}(H)
 $$
and write $h^{p,q}$ for the {\em Hodge number} $h^{p,q} :=\dim H^{p,q}$.

Let $Z$ be any quasi-projective algebraic variety (maybe non-smooth or non-compact). 
The cohomology groups $H^k(Z)$ and the cohomology groups with compact support  
$H^k_c(Z)$ are endowed with mixed Hodge structures \cite{Deligne2,Deligne3}. 
We define the {\em Hodge numbers} of $Z$ by
$h^{k,p,q}_{c}(Z) = h^{p,q}(H_{c}^k(Z))$.
The Hodge-Deligne polynomial, or E-polynomial, is defined as 
 $$
 e(Z)=e(Z)(u,v):=\sum _{p,q,k} (-1)^{k}h^{k,p,q}_{c}(Z) u^{p}v^{q}.
 $$
When $h_c^{k,p,q}=0$ for $p\neq q$, the polynomial $e(Z)$ depends only on the product $uv$.
This will happen in all the cases that we shall investigate here. In this situation,
we use the variable $q=uv$. If this happens, we say that the variety is {\it of balanced type}.
For instance, $e(\CC^n)=q^n$.

The key property of Hodge-Deligne polynomials that permits their calculation is that they are additive for
stratifications of $Z$. If $Z$ is a complex algebraic variety and
$Z=\bigsqcup_{i=1}^{n}Z_{i}$, where all $Z_i$ are locally closed in $Z$, then $e(Z)=\sum_{i=1}^{n}e(Z_{i})$.

There is another useful property that we shall use often: if there is an action of $\ZZ_2$ on $X$, then we have
polynomials $e(X)^+, e(X)^-$, which are the E-polynomials of the invariant and anti-invariant parts of the cohomology
of $X$, respectively. More concretely, $e(X)^+=e(X/\ZZ_2)$ and $e(X)^-=e(X)-e(X)^+$. Then if $\ZZ_2$ acts
on $X$ and on $Y$, we have the equality (see \cite[Proposition 2.6]{lomune}) 
 \begin{equation}\label{eqn:+-}
e((X\x Y)/\ZZ_2) =e(X)^+e(Y)^+ +e(X)^-e(Y)^-\, .
  \end{equation}

Suppose that
 \begin{equation}\label{fibration}
  F \longrightarrow Z \overset{\pi}{\longrightarrow} B 
 \end{equation}
is a fibration locally trivial in the analytic topology, and with $F$ of balanced type.
The fibration defines a local system $\mathcal{H}^{k}_{c}$, 
whose fibers are the cohomology groups $H^{k}_{c}(F_{b})$, where $b\in B$, $F_{b}=\pi^{-1}(b)$. 
Associated to the fibration, there is a monodromy representation
 \begin{equation}\label{eqn:exxtra}
 \rho : \pi_{1}(B) \longrightarrow  \GL(H^{k,p,p}_c(F)).
 \end{equation}
Suppose that the  monodromy group $\Gamma=\im(\rho)$ is an abelian and finite group. 
Then $H_c^{k,p,p}(F)$ are modules over the representation ring $R(\Gamma)$. So there is a well
defined element, the  {\em Hodge monodromy representation},
  \begin{equation}\label{eqn:Hodge-mon-rep}
  R(Z) := \sum (-1)^k H_c^{k,p,p}(F)\,  q^p \in R(\Gamma)[q] \, .
  \end{equation}
As the monodromy representation (\ref{eqn:exxtra}) has finite image, there is a finite covering
$B_\rho \to B$ such that the pull-back fibration has trivial monodromy. We have the
following result.

\begin{thm}[{\cite[Theorem 2]{mamu}}] \label{thm:general-fibr}
 Suppose that $B_\rho$ is of balanced type. Then $Z$ is of balanced type. 
There is a $\ZZ[q]$-linear map
 $$
 e: R(\Gamma)[q] \to \ZZ[q]
 $$
satisfying the property that $e(R(Z))=e(Z)$
\end{thm}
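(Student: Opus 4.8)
The plan is to exploit the finite abelian structure of $\Gamma$ to reduce the whole computation to twisted compactly-supported cohomologies of the base $B$, and to read off $e(Z)$ from the Leray spectral sequence of $\pi$. Since $\Gamma$ is finite and abelian, its irreducible representations are the one-dimensional characters $\chi\in\hat\Gamma$, and these form a $\ZZ$-basis of $R(\Gamma)$; hence $R(\Gamma)[q]$ is the free $\ZZ[q]$-module on $\hat\Gamma$. Each $\chi$ determines a rank-one local system $L_\chi$ on $B$ by composing $\rho$ with $\chi\colon\Gamma\to\CC^\ast$, and I would \emph{define} the map $e$ on this basis by
$$
e(\chi):=e(B,L_\chi)=\sum_i(-1)^i\,e\bigl(H^i_c(B,L_\chi)\bigr),
$$
extending $\ZZ[q]$-linearly so that $e(q^p\chi)=q^p\,e(B,L_\chi)$. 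The first point to settle is that each $e(B,L_\chi)$ is genuinely a polynomial in $q$, i.e.\ that $H^\ast_c(B,L_\chi)$ is of balanced type. For this I would use that $B_\rho\to B$ is a Galois cover with group $\Gamma$, so $\Gamma$ acts on $H^\ast_c(B_\rho)$ by algebraic (hence mixed-Hodge-structure–preserving) automorphisms, and the isotypic decomposition $H^\ast_c(B_\rho)=\bigoplus_\chi H^\ast_c(B,L_\chi)$ is a splitting of mixed Hodge structures. Since $B_\rho$ is assumed to be of balanced type, every isotypic summand is balanced; this shows $e(B,L_\chi)\in\ZZ[q]$ and that $e$ is well defined.

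Next I would compute $e(Z)$ through the Leray spectral sequence with compact supports, $E_2^{i,k}=H^i_c(B,\cH^k_c)\Rightarrow H^{i+k}_c(Z)$, which is a spectral sequence of mixed Hodge structures. Because $F$ is of balanced type and the monodromy is finite, each $\cH^k_c$ splits, as a $\Gamma$-equivariant variation, into Tate pieces $\cH^{k,p,p}_c$ of pure type $(p,p)$, and each such piece is, as a local system with $\Gamma$-action, a sum of copies of the $L_\chi$; writing $m_{k,p,\chi}$ for the multiplicity of $\chi$ in $H^{k,p,p}_c(F)$, one gets $H^i_c(B,\cH^{k,p,p}_c)\isom\bigoplus_\chi H^i_c(B,L_\chi)^{\oplus m_{k,p,\chi}}\otimes(\text{a class of pure type }(p,p))$. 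Taking alternating sums and using additivity of $e$ over the spectral sequence yields
$$
e(Z)=\sum_k(-1)^k\,e(B,\cH^k_c)=\sum_{k,p,\chi}(-1)^k\,m_{k,p,\chi}\,q^p\,e(B,L_\chi).
$$
Comparing this with $R(Z)=\sum_{k,p,\chi}(-1)^k\,m_{k,p,\chi}\,\chi\,q^p$ and applying $e$ term by term gives exactly $e(R(Z))=e(Z)$. The balancedness of $Z$ then follows at the level of Hodge structures rather than merely of E-polynomials: each $E_2^{i,k}$ is (a sum of) tensor products of a balanced group $H^i_c(B,L_\chi)$ with a pure Tate class, hence balanced, and since the spectral sequence is one of mixed Hodge structures and balancedness is inherited by subquotients and extensions, every $H^n_c(Z)$ is balanced.

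The main obstacle is controlling mixed Hodge structures across the passage between $Z$, its base, and the cover, given that $\pi$ is only locally trivial in the analytic topology and need not be Zariski locally trivial. The key device is precisely the algebraic Galois cover $B_\rho\to B$ trivializing the monodromy: over $B_\rho$ the variations $\cH^k_c$ become constant as variations of Hodge structure, so the Leray terms upstairs compute as honest tensor products $H^i_c(B_\rho)\otimes H^k_c(F)$ of mixed Hodge structures, and the twisted groups $H^\ast_c(B,L_\chi)$ acquire their Hodge structures as the $\Gamma$-isotypic summands downstairs. Verifying that the isotypic projection, the Hodge and weight filtrations, and the Leray spectral sequence are all mutually compatible—so that $e(B,L_\chi)$ is intrinsic and the comparison with $R(Z)$ is legitimate—is the technical heart of the argument; the remaining manipulations with characters and multiplicities are formal.
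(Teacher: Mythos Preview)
This theorem is quoted from \cite{mamu} (as the label ``\cite[Theorem 2]{mamu}'' indicates) and the present paper gives no proof of it; there is therefore nothing in this paper to compare your argument against. Your proposal is, in any case, a correct and essentially canonical proof: define $e(\chi)=e(B,L_\chi)$ via the isotypic decomposition of $H^\ast_c(B_\rho)$ under the finite abelian deck group $\Gamma$ (which is a splitting of mixed Hodge structures because $\Gamma$ acts algebraically), note that balancedness of $B_\rho$ forces balancedness of each summand, and then run the compactly supported Leray spectral sequence of $\pi$, which is a spectral sequence of mixed Hodge structures, to identify $e(Z)$ with $\sum_\chi a_\chi(q)\,e(B,L_\chi)$ when $R(Z)=\sum_\chi a_\chi(q)\,\chi$. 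The only points that merit a sentence of justification in a formal write-up are that the cover $B_\rho\to B$ is algebraic (it is the \'etale cover associated to the finite quotient $\pi_1(B)\twoheadrightarrow\Gamma$, so $B_\rho$ is quasi-projective) and that the Leray spectral sequence for $\pi$ carries a mixed Hodge structure even though $\pi$ is only analytically locally trivial (this is standard, e.g.\ via Saito's mixed Hodge modules, and is implicitly used throughout \cite{lomune,mamu}).
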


This can be rephrased as follows.
Let $S_1,\ldots, S_N$ be the
irreducible representations of $\Gamma$ (there are $N=\# \Gamma$ of them, and all
of them are one-dimensional). These are generators of $R(\Gamma)$ as a free abelian group. 
Write $s_i(q)=e(S_i)$, $1\leq i\leq N$. Then if we
write the Hodge monodromy representation of (\ref{eqn:Hodge-mon-rep}) as
  $$
 R(Z)= a_1(q) S_1 + \ldots a_N(q) S_N,
 $$
then we have 
 $$
 e(Z)= a_1(q) s_1(q) + \ldots +a_N(q) s_N(q) .
 $$

We shall only use 
Theorem \ref{thm:general-fibr} in two situations. First, when 
$\pi:Z\to B$ is a fibre bundle with fibre $F$ such that the action of $\pi_1(B)$ on $H_c^*(F)$ is trivial.
Then $R(Z)=e(F) T$, where $T$ is the trivial local system and
$e(Z)=e(F) e(B)$ (this result appears in \cite[Proposition 2.4]{lomune}).
In particular, this happens when 
$Z$ is a $G$-space with isotropy $H<G$ such that $G/H\to Z\to B$ is a fiber bundle, 
and $G$, $H$ are connected algebraic groups. Then $e(Z)=e(B)e(G)/e(H)$

Second, when $B$ is one-dimensional. So we have a fibration 
 $$
 F\too Z \too B=\CC-\{q_1,\ldots, q_\ell\}, 
 $$
with monodromy $\rho$ and Hodge monodromy representation $e(R(Z))\in R(\Gamma)[q]$.
Assuming that $B_\rho$ is a rational curve, we write
$R(Z)=a_1(q) T+ a_2(q) S_2+ \ldots + a_N(q)S_N$, where $T$ is the trivial representation
and $S_2, \ldots, S_N$ are the non-trivial representations. Then $e(T)=q-\ell$ and $e(S_i)=-(\ell-1)$,
$2 \leq i\leq N$. Hence
 \begin{equation}\label{eqn:dos}
 e(Z)=(q- \ell) a_1(q) -(\ell -1) \sum_{i=2}^N a_i(q)=(q-1)\, e(F)^{inv} - (\ell-1) e(F),
 \end{equation}
where $e(F)^{inv}=a_1(q)$ is the E-polynomial of
the invariant part of the cohomology of $F$ and $e(F)=\sum_{i=1}^N a_i(q)$. See \cite[Corollary 3]{mamu}.

\section{Stratifying the space of representations} \label{sec:strata}

Let $g\geq 1$ be any natural number. We define the following sets:
\begin{itemize}
\item $\oX_{0}^{g}= \lbrace (A_{1},B_{1},\ldots, A_{g},B_{g}) \in \SL(2,\CC)^{2g} \mid \prod_{i=1}^{g}[A_{i},B_{i}]= \Id\}$.
\item $\oX_{1}^{g}= \lbrace (A_{1},B_{1},\ldots, A_{g},B_{g})\in \SL(2,\CC)^{2g} \mid \prod_{i=1}^{g}[A_{i},B_{i}]= -\Id\}$.
\item $\oX_{2}^{g}= \lbrace (A_{1},B_{1},\ldots, A_{g},B_{g}) \in \SL(2,\CC)^{2g}\mid \prod_{i=1}^{g}[A_{i},B_{i}]= J_+=
 \begin{pmatrix} 1& 1\\ 0 & 1 \end{pmatrix}\}$.
\item $\oX_{3}^{g}= \lbrace (A_{1},B_{1},\ldots, A_{g},B_{g}) \in \SL(2,\CC)^{2g}\mid \prod_{i=1}^{g}[A_{i},B_{i}]=  J_-=
 \begin{pmatrix} -1& 1\\ 0 & -1 \end{pmatrix}\}$.
\item $\oX_{4,\lambda}^{g}= 
\lbrace (A_{1},B_{1},\ldots, A_{g},B_{g}) \in \SL(2,\CC)^{2g}\mid \prod_{i=1}^{g}[A_{i},B_{i}]= \xi_\lambda=
\begin{pmatrix} \lambda & 0 \\ 0 & \lambda^{-1} \end{pmatrix} \rbrace$, where $\lambda\in \CC-\{ 0, \pm 1\}$.
\item $\oX_{4}^{g}= 
\lbrace (A_{1},B_{1},\ldots, A_{g},B_{g},\lambda) \in \SL(2,\CC)^{2g}\x (\CC-\{ 0, \pm 1\}) \mid 
\prod_{i=1}^{g}[A_{i},B_{i}]= \begin{pmatrix} \lambda & 0 \\ 0 & \lambda^{-1} \end{pmatrix} \rbrace$.
\end{itemize}

There is a natural fibration 
 $$ 
  \oX_4^g  \too  \CC-\{ 0, \pm 1\}
 $$
whose fiber are $\oX_{4,\lambda}^{g}$.
There is an action of $\ZZ_2$ on $\oX_4$ given by
$ (A_{1},\ldots,B_{g},\lambda) \mapsto (P^{-1}_0A_1P_0,\ldots, P^{-1}_0B_gP_0,\lambda^{-1})$, with $P_0=
\left(\begin{array}{cc}    0 & 1 \\ 1 & 0  \end{array} \right)$. There is an induced fibration
 $$ 
 \oX_4^g/\ZZ_2  \too  ( \CC-\{ 0, \pm 1\})/\ZZ_2 \cong \CC-\{\pm 2\},
 $$
where the basis is parametrized by $t=\lambda+\lambda^{-1}$.

For $g=1$, we shall denote $\oX_0=\oX_0^1, \oX_1=\oX_1^1$, etc., following the notations of \cite{lomune} and \cite{mamu}.
For $g=2$, we shall denote   $\overline{Y}_0=\oX_0^2, \overline{Y}_1=\oX_1^2$, etc., following the notations of \cite[Section 4]{mamu}.
We collect the information for $g=1,2$ from \cite{lomune} and \cite{mamu}.
\begin{itemize}
\item $e(\oX_{0}) =  q^{4}+4q^{3}-q^2-4q$
\item  $e(\oX_1)  =q^{3}-q $
\item  $e(\oX_2) = q^3-2q^2-3 q $
\item  $e(\oX_3) =q^3 +3 q^2 $
\item  $R(\oX_{4}/\mathbb{Z}_{2})=q^{3}T-3q S_{2}+3q^{2}S_{-2}-S_{0}$
\item $e(\overline{Y}_0)= q^9+q^8+12q^7+2q^6-3q^4-12q^3-q$.
\item  $e(\overline{Y}_1)= q^9-3q^7-30q^6+30q^4+3q^3-q$.
\item $e(\overline{Y}_2)= q^9-3q^7-4q^6-39q^5-4q^4-15q^3$.
\item $e(\overline{Y}_3)= q^9-3q^7+15q^6+6q^5+45q^4$.
\item $R(\overline{Y}_{4}/\ZZ_2)=(q^9-3q^7+6q^5)T-(45q^5+15q^3)S_{2}+(15q^6+45q^4)S_{-2}+(-6q^4+3q^2-1)S_{0}$.
\end{itemize}
Here the monodromy group is $\Gamma=\ZZ_2\x\ZZ_2$, generated by the loops $\gamma_{\pm 2}$ around the punctures $\pm 2$ of 
$\CC-\{\pm 2\}$. The ring $R(\Gamma)$ is generated by the trivial representation $T$, the representations $S_{\pm 2}$ which 
are non-trivial around $\pm2$ and trivial around $\mp 2$, and the representation $S_0=S_2\otimes S_{-2}$.

\medskip

Now we shall set up an induction. Assume that for all $k< g$, the Hodge monodromy representation of $\oX_4^k/\ZZ_2$
is in $R(\Gamma)[q]$. We write
 $$
 R(\oX_4^k/\ZZ_2)=a_k T+ b_k S_2 + c_k S_{-2} + d_k S_0,
$$
for some polynomials $a_k,b_k,c_k,d_k \in  \ZZ[q]$.
 
Take $k,h< g$. Fix some $C=\Id, -\Id, J_+,J_-$ or $\xi_\lambda$. Then 
 \begin{equation} \label{eqn:uno}
\prod_{i=1}^{k+h}[A_{i},B_{i}]= C  \iff \prod_{i=1}^{k}[A_{i},B_{i}]= C \prod_{i=1}^{h}[B_{k+i},A_{k+i}].
 \end{equation}

\section{Computation of $e(\oX_0^{k+h})$}\label{subsec:0}

Using (\ref{eqn:uno}), we stratify $\oX_{0}^{k+h}= \bigsqcup W_{i}$, where
\begin{itemize}
\item $W_{0}=\lbrace (A_{1},B_1,\ldots,A_{k+h},B_{k+h}) \mid \prod_{i=1}^{k}[A_{i},B_{i}]=  \prod_{i=1}^{h}[B_{k+i},A_{k+i}] =\Id \rbrace \cong \oX_{0}^{k}\times  \oX_{0}^h$.
\item $W_{1}=\lbrace (A_{1},B_1,\ldots,A_{k+h},B_{k+h}) \mid \prod_{i=1}^{k}[A_{i},B_{i}]=  \prod_{i=1}^{h}[B_{k+i},A_{k+i}] =-\Id \rbrace \cong \oX_{1}^{k}\times  \oX_{1}^h$.
\item $W_2=\lbrace (A_{1},B_1,\ldots,A_{k+h},B_{k+h}) \mid \prod_{i=1}^{k}[A_{i},B_{i}]=  \prod_{i=1}^{h}[B_{k+i},A_{k+i}] \sim J_{+} \rbrace \cong
 \PGL(2,\CC)/U \times \oX_{2}^{k}\times \oX_{2}^h$, where $U=\{ \begin{pmatrix} 1 & x \\ 0 & 1 \end{pmatrix}\} \cong  \CC$.
\item $W_3=\lbrace (A_{1},B_1,\ldots,A_{k+h},B_{k+h}) \mid \prod_{i=1}^{k}[A_{i},B_{i}]=  \prod_{i=1}^{h}[B_{k+i},A_{k+i}] \sim J_{-} \rbrace\cong
 \PGL(2,\CC)/U \times \oX_{3}^{k}\times \oX_{3}^h$.
\item $W_{4}= \lbrace (A_{1},B_1,\ldots,A_{k+h},B_{k+h}) \mid \prod_{i=1}^{k}[A_{i},B_{i}]=  \prod_{i=1}^{h}[B_{k+i},A_{k+i}] \sim
 \begin{pmatrix} \lambda & 0 \\ 0 & \lambda^{-1} \end{pmatrix}, \lambda \neq 0, \pm 1 \rbrace$.
\end{itemize}

To compute $e(W_4)$, we define  
 $$
 \oW_4= \lbrace (A_{1},B_1,\ldots,A_{k+h},B_{k+h},\lambda) \mid \prod_{i=1}^{k}[A_{i},B_{i}]=  \prod_{i=1}^{h}[B_{k+i},A_{k+i}]=
 \begin{pmatrix} \lambda & 0 \\ 0 & \lambda^{-1} \end{pmatrix}, \lambda \neq 0, \pm 1 \rbrace,
 $$ 
which produces the fibration 
  $$
 \oW_{4}/\ZZ_2 \rightarrow \CC -\{ \pm 2\},
 $$ 
whose Hodge monodromy representation is
\begin{align} \label{eqn:tres}
R(\oW_{4}/\ZZ_2) =& \, R(\oX_{4}^{k}/\ZZ_2) \otimes R(\oX_{4}^h/\ZZ_2) \nonumber \\
 = &\,  (a_{k}a_h+b_{k}b_h+c_{k}c_h+d_{k}d_h)T + (a_{k}b_h+b_{k}a_h+c_{k}d_h+d_{k}c_h)S_{2} \\
& + (a_{k}c_h+b_{k}d_h+c_{k}a_h+d_{k}b_h)S_{-2} + (a_{k}d_h+b_{k}c_h+c_{k}b_h+d_{k}a_h)S_{0}. \nonumber
\end{align}

From $R(\oW_{4}/\ZZ_2)$ there is a standard procedure to obtain $e(W_4)$. 
For brevity, write $R(\oW_{4}/\ZZ_2)=A T+ BS_2+ CS_{-2}+DS_0$, where
\begin{align} \label{eqn:tres.5}
 A&=  a_{k}a_h+b_{k}b_h+c_{k}c_h+d_{k}d_h\nonumber \\
 B&= a_{k}b_h+b_{k}a_h+c_{k}d_h+d_{k}c_h \\
 C &= a_{k}c_h+b_{k}d_h+c_{k}a_h+d_{k}b_h \nonumber\\
 D&= a_{k}d_h+b_{k}c_h+c_{k}b_h+d_{k}a_h \nonumber
\end{align}
First, using (\ref{eqn:dos}), we have that $e(\oW_{4}/\ZZ_2)=(q-2)A-(B+C+D)$.
On the other hand, the $2:1$-cover $\CC-\{0,\pm1\} \to \CC -\{ \pm 2\}$ allows us to
deduce that $R(\oW_{4})=(A+D) T+ (B+C)N$, where $T$ is the trivial representation,
and $N$ is the representation which is non-trivial and of order two around the origin. So
using  (\ref{eqn:dos}) again, we have that $e(\oW_{4})=(q-3)(A+D)-2(B+C)$.

Now note that
 $$
 W_4 \cong (\PGL(2,\CC) /D \x \oW_{4}) /\ZZ_2,
 $$
where $D\cong \CC^*$ are the diagonal matrices.
By \cite[Proposition 3.2]{lomune}, we have that $ e(\PGL(2, \CC)/D)^+=q^2$ and $ e(\PGL(2, \CC)/D)^-=q$. Using (\ref{eqn:+-}),
 \begin{align}\label{eqn:RX4-RX4Z2->eX4}
 e(W_4) &= q^2 e(\oW_ 4)^+ + q \, e(\oW_4)^- \nonumber \\
  &= q^2 e(\oW_4/\ZZ_2) + q( e(\oW_4)-e(\oW_4/\ZZ_2))  \nonumber \\
 &= (q^2-q)e(\oW_4/\ZZ_2) +q \, e(\oW_{4}) \\
 &= (q^2-q)((q-2)A-(B+C+D)) + q ((q-3)(A+D)-2(B+C))  \nonumber \\
 &= (q^3-2q^2-q)A-(q^2+q)(B+C)-2qD. \nonumber 
 \end{align}

All together, recalling also that $e(\PGL(2,\CC))=q^3-q$ and so $e(\PGL(2,\CC)/U)=q^2-1$, we have
 \begin{align} \label{eqn:eX0} 
 e(\oX_{0}^{k+h}) &=  e(\oX_0^k)e(\oX_0^h)+ e(\oX_1^k)e(\oX_1^h)+
 (q^2-1) e(\oX_2^k)e(\oX_2^h)+ (q^2-1) e(\oX_3^k)e(\oX_3^h)+ e(W_4).
 \end{align}

Setting $k=g-1$, $h=1$, and substituting the values $A,B,C,D$ from (\ref{eqn:tres.5}) into (\ref{eqn:RX4-RX4Z2->eX4}),
and then the values of $e(\oX_j^1)$ and $a_1,b_1,c_1,d_1$ from Section \ref{sec:strata}, we have
 \begin{align}\label{eqn:eX0.} \tag{$\alpha$}
 e_0^g=e(\oX_{0}^{g}) = & \, (q^4 + 4 q^3 - q^2 - 4 q) e_0^{g-1}+ (q^3-q) e_1^{g-1} \nonumber \\ 
&+ ( q^5- 2 q^4 - 4 q^3+ 2 q^2  +3 q ) e_2^{g-1}+ (q^5+3q^4-q^3-3q^2) e_3^{g-1}  \nonumber \\ 
&+ (q^6-2q^5-4q^4+3q^2+2q) a_{g-1} + (-q^5-4q^4+4q^2+q)b_{g-1}   \nonumber\\
&+ (2q^5-7q^4-3q^3+7q^2-q) c_{g-1} + (-5q^4-q^3+5q^2-q) d_{g-1} \nonumber
 \end{align}

\section{Computation of $e(\oX_1^{k+h})$}\label{subsec:1}

We do something similar to the previous case. We stratify $\oX_{1}^{k+h}= \bigsqcup W'_{i}$, where
\begin{itemize}
\item $W'_{0}=\lbrace (A_{1},B_1,\ldots,A_{k+h},B_{k+h}) \mid \prod_{i=1}^{k}[A_{i},B_{i}]= - \prod_{i=1}^{h}[B_{k+i},A_{k+i}] =\Id \rbrace \cong \oX_{0}^{k}\times  \oX_{1}^h$.
\item $W'_{1}=\lbrace (A_{1},B_1,\ldots,A_{k+h},B_{k+h}) \mid \prod_{i=1}^{k}[A_{i},B_{i}]=  -\prod_{i=1}^{h}[B_{k+i},A_{k+i}] =-\Id \rbrace \cong \oX_{1}^{k}\times  \oX_{0}^h$.
\item $W'_2=\lbrace (A_{1},B_1,\ldots,A_{k+h},B_{k+h}) \mid \prod_{i=1}^{k}[A_{i},B_{i}]=  -\prod_{i=1}^{h}[B_{k+i},A_{k+i}] \sim J_{+} \rbrace
 \cong\PGL(2,\CC)/U \times \oX_{2}^{k}\times \oX_{3}^h$, where $U=\{ \begin{pmatrix} 1 & x \\ 0 & 1 \end{pmatrix}\} \cong  \CC$.
\item $W'_3=\lbrace (A_{1},B_1,\ldots,A_{k+h},B_{k+h}) \mid \prod_{i=1}^{k}[A_{i},B_{i}]= - \prod_{i=1}^{h}[B_{k+i},A_{k+i}] \sim J_{-} \rbrace
 \cong\PGL(2,\CC)/U \times \oX_{3}^{k}\times \oX_{2}^h$.
\item $W'_{4}= \lbrace (A_{1},B_1,\ldots,A_{k+h},B_{k+h}) \mid \prod_{i=1}^{k}[A_{i},B_{i}]= - \prod_{i=1}^{h}[B_{k+i},A_{k+i}] \sim
 \begin{pmatrix} \lambda & 0 \\ 0 & \lambda^{-1} \end{pmatrix}, \lambda \neq 0, \pm 1 \rbrace$.
\end{itemize}

To compute $e(W'_4)$, we define  $\oW'_4=
\lbrace (A_{1},B_1,\ldots,A_{k+h},B_{k+h},\lambda) \mid \prod_{i=1}^{k}[A_{i},B_{i}]= - \prod_{i=1}^{h}[B_{k+i},A_{k+i}]=
 \begin{pmatrix} \lambda & 0 \\ 0 & \lambda^{-1} \end{pmatrix}, \lambda \neq 0, \pm 1 \rbrace$, which
produces the fibration 
  $$
 \oW'_{4}/\ZZ_2 \rightarrow \CC -\{ \pm 2\},
 $$ 
whose Hodge monodromy representation is given as (where $\tau(\lambda)=-\lambda$), 
\begin{align} \label{eqn:tres'}
R(\oW'_{4}/\ZZ_2) =& \, R(\oX_{4}^{k}/\ZZ_2) \otimes \tau^* R(\oX_{4}^h/\ZZ_2) \nonumber \\
 = & \, (a_{k}T+b_{k}S_{2}+c_{k}S_{-2}+d_{k}S_{0}) \otimes (a_hT+c_hS_{2}+b_hS_{-2}+d_hS_{0}) \nonumber\\
 = & \, (a_{k}a_h+b_{k}c_h+c_{k}b_h+d_{k}d_h)T + (a_{k}c_h+b_{k}a_h+c_{k}d_h+d_{k}b_h)S_{2}\\
& + (a_{k}b_h+b_{k}d_h+c_{k}a_h+d_{k}c_h)S_{-2} + (a_{k}d_h+b_{k}b_h+c_{k}c_h+d_{k}a_h)S_{0} \nonumber
\end{align}
We write $R(\oW'_{4}/\ZZ_2)=A' T+ B'S_2+ C'S_{-2}+D'S_0$, with
\begin{align} \label{eqn:tres'.5}
 A'&=  a_{k}a_h+b_{k}c_h+c_{k}b_h+d_{k}d_h\nonumber \\
 B'&= a_{k}c_h+b_{k}a_h+c_{k}d_h+d_{k}b_h \\
 C' &=a_{k}b_h+b_{k}d_h+c_{k}a_h+d_{k}c_h \nonumber\\
 D'&=a_{k}d_h+b_{k}b_h+c_{k}c_h+d_{k}a_h \nonumber
\end{align}
We use (\ref{eqn:RX4-RX4Z2->eX4}) to get
 \begin{align*}
  e(W'_4) &= (q^3-2q^2-q)A'-(q^2+q)(B'+C')-2qD'.
 \end{align*}
Finally
 \begin{align} \label{eqn:eX1} 
 e(\oX_{1}^{k+h}) &=  e(\oX_0^k)e(\oX_1^h)+ e(\oX_1^k)e(\oX_0^h)+
 (q^2-1) e(\oX_2^k)e(\oX_3^h)+ (q^2-1) e(\oX_3^k)e(\oX_2^h)+ e(W'_4).
 \end{align}

Setting $k=g-1$, $h=1$, and substituting the values $A',B',C',D'$ from (\ref{eqn:tres'.5})
and the values of $e(\oX_j^1)$ and $a_1,b_1,c_1,d_1$ from Section \ref{sec:strata}, we have
 \begin{align}\label{eqn:eX1.} \tag{$\beta$}
  e_1^g=e(\oX_{1}^{g}) = & \,  (q^3-q)e_0^{g-1}+ (q^4 + 4 q^3 - q^2 - 4 q) e_1^{g-1} \nonumber \\ 
&+  (q^5+3q^4-q^3-3q^2)e_2^{g-1}+ ( q^5- 2 q^4 - 4 q^3+ 2 q^2  +3 q ) e_3^{g-1}  \nonumber \\ 
&+(q^6-2q^5-4q^4+3q^2+2q) a_{g-1} + (2q^5-7q^4-3q^3+7q^2+q) b_{g-1} \nonumber\\
&+ (-q^5-4q^4+4q^2+q) c_{g-1} + (-5q^4-q^3+5q^2+q) d_{g-1} \nonumber
 \end{align}

\section{Computation of $e(\oX_2^{k+h})$}\label{subsec:2}

Now we consider
$$
Z=\lbrace (A_{1},B_{1},\ldots, A_{k+h},B_{k+h}) \mid \prod_{i=1}^{k}[A_{i},B_{i}]= J_{+}\prod_{i=1}^{h}[B_{k+i},A_{k+i}] \rbrace.
$$
We write
$$
\nu =\prod_{i=1}^{h}[B_{k+i},A_{k+i}] =\begin{pmatrix} a & b \\ c & d \end{pmatrix}, \qquad 
\delta = \prod_{i=1}^{k}[A_{i},B_{i}] = J_{+}\nu = \begin{pmatrix} a+c & b+d \\ c & d \end{pmatrix}.
$$
Let $t_{1}=\tr \nu$, $t_{2}=\tr \delta$. Note that $c=t_{2}-t_{1}$. 

We use the stratification defined as in  \cite[Section 11]{lomune}, according to the values of the pair $(t_{1},t_{2})$. 
\begin{itemize}
\item $Z_{1}$ given by $(t_{1},t_{2})=(2,2)$. In this case $c=0$, $a=d=1, b\in \CC$. If $b\neq 0,-1$, both are of Jordan type, 
whereas if $b=0,1$ one of them is of Jordan type and the other is equal to $\Id$. We get
$$
e(Z_{1})=(q-2)e(\oX^k_{2})e(\oX_{2}^{h})+e(\oX_{2}^k)e(\oX_{0}^{h})+e(\oX_{0}^k)e(\oX_{2}^{h}).
$$ 
\item $Z_{2}$ given by $(t_{1},t_{2})=(-2,-2)$. Analogously
$$
e(Z_{2}) = (q-2)e(\oX_{3}^k)e(\oX_{3}^{h})+e(\oX_{3}^k)e(\oX_{1}^{h})+e(\oX_{1}^k)e(\oX_{3}^{h}).
$$
\item $Z_{3}$ given by $(t_{1},t_{2})=(2,-2),(-2,2)$. Now $c\neq 0$. The action of $U\cong \CC$ allows to fix 
$d=0$. Both $\nu,\delta$ are of Jordan type. So we obtain
 $$
 e(Z_{3})=q(e(\oX^k_{2})e(\oX_{3}^{h})+e(\oX^k_{3})e(\oX_{2}^{h})).
 $$
\item $Z_{4}$ given by the subcases:
\begin{itemize}
\item $Z_{4,1}$ given by $(2,t_{2}), t_{2}\neq \pm 2$ and $(-2,t_{2}), t_{2}\neq \pm 2$. We focus on the first case. 
It must be $c\neq 0$. The group $U\cong \CC$ acts freely on the matrix $\nu$, which is of Jordan type. Note also
that $\delta$ is diagonalizable. The second case is similar with $\nu\sim J_-$. We thus get
$$
e(Z_{4,1})= q(e(\oX_{2}^{h})+e(\oX_{3}^{h}))e(\oX_{4}^k/\ZZ_{2}).
$$
\item $Z_{4,2}$ given by $(t_{1},2),t_{1}\neq \pm 2$ and $(t_{1},-2), t_{1}\neq \pm 2$. It is completely similar, interchanging the roles of $\nu$ and $\delta$.
$$
e(Z_{4,2}) =q(e(\oX_{2}^k)+e(\oX_{3}^k))e(\oX_{4}^{h}/\ZZ_{2}).
$$
\end{itemize}
\item $Z_{5}$ corresponding to $t_{1}=t_{2}\neq \pm 2$. Now 
 $$
  \eta=\left(\begin{array}{cc}
      a & b\\
      0 & a^{-1}
    \end{array}\right), \ \delta=\left(\begin{array}{cc}
      a & b+a^{-1}\\
      0 & a^{-1}
    \end{array}\right),
    $$
Therefore $e(Z_5)=q\, e(\overline{Z}_5)$, where $\overline{Z}_{5}$ is a fibration over $a\in \CC- \{0,\pm 1 \}$ whose fibers are 
$\oX_{4,a}^k\times \oX_{4,a}^{h}$. Thus the
 Hodge monodromy representation is given in (\ref{eqn:tres}),
\begin{align*}
R(\overline{Z}_{5}/\ZZ_2) & =A T+ BS_2+CS_{-2}+DS_0 ,\\
R(\overline{Z}_{5}) & =(A+D)T+(B+C)N, \\
 e(Z_5)  &= q\, e(\overline{Z}_{5})=q((q-3)(A+D)-2(B+C)).
\end{align*}

\item $Z_{6}$ corresponding to the open stratum $t_{1},t_{2}\neq \pm 2, t_{1}\neq t_2$. 
As $c\neq 0$, we can arrange $d=0$ by using the action of $U\cong \CC$. Both 
$\delta$ and $\nu$ are diagonalizable matrices. If we ignore for a while the condition 
$t_{1}\neq t_{2}$, the total space is isomorphic to $\CC \times \oX_{4}^k/\ZZ_2 \times \oX_{4}^{h}/\ZZ_2$. 
The fibration over the diagonal $(t_{1},t_{1})$ has total space isomorphic $\CC \times (\overline{Z}_{5}/\ZZ_{2})$. Thus
\begin{align*}
e(\overline{Z}_{5}/\ZZ_2) &=(q-2)A-(B+C+D), \\
 e(Z_{6}) &=q(e(\oX_{4}^k/\ZZ_2)e(\oX_{4}^{h}/\ZZ_2)-e(\overline{Z}_{5}/\ZZ_2)).
\end{align*}
\end{itemize}

Adding all up,
\begin{align*}
e(\oX_{2}^{k+h}) = & \,e(\oX_{2}^k)e(\oX_{0}^{h})+e(\oX_{0}^k)e(\oX_{2}^{h})-2e(\oX^k_{2})e(\oX_{2}^{h})
+e(\oX_{3}^k)e(\oX_{1}^{h})+e(\oX_{1}^k)e(\oX_{3}^{h})-2e(\oX_{3}^k)e(\oX_{3}^{h}) \\
& +q\, (e(\oX^k_{2})+e(\oX^k_{3})+e(\oX_{4}^k/\ZZ_{2}))( e(\oX_{2}^{h})+ e(\oX_{3}^{h})+e(\oX_{4}^{h}/\ZZ_{2}))
+q\,( e(\overline{Z}_{5})- e(\overline{Z}_{5}/\ZZ_2)).
\end{align*}

Setting $k=g-1$, $h=1$, and substituting the values $A,B,C,D$ from (\ref{eqn:tres.5})
and the values of $e(\oX_j^1)$ and $a_1,b_1,c_1,d_1$ from Section \ref{sec:strata}, we have
 \begin{align}\label{eqn:eX2.} \tag{$\gamma$}
  e_2^g=e(\oX_2^{g}) = & \, 
 (q^3 - 2 q^2 - 3 q)e_0^{g-1}+ (q^3 + 3 q^2) e_1^{g-1} \nonumber \\ 
&+  (q^5+q^4+ 3 q^2 + 3 q)e_2^{g-1}+ (q^5-3 q^3- 6 q^2  ) e_3^{g-1}  \nonumber \\ 
& +(q^6-2q^5-3q^4+q^3+3q^2)a_{g-1}+(-q^5+2q^4-4q^3+3q^2)  b_{g-1}\nonumber \\
& +(-q^5-q^4-4q^3+6q^2) c_{g-1}+(-2q^4-q^3+3q^2)d_{g-1} \nonumber
 \end{align}

\section{Computation of $e(\oX_3^{k+h})$}\label{subsec:3}

This is similar to the previous case. Consider
$$
Z'=\lbrace (A_{1},B_{1},\ldots, A_{k+h},B_{k+h}) \mid \prod_{i=1}^{k}[A_{i},B_{i}]= J_{-}\prod_{i=1}^{h}[B_{k+i},A_{k+i}] \rbrace.
$$
We write
$$
\nu =\prod_{i=1}^{h}[B_{k+i},A_{k+i}] =\begin{pmatrix} a & b \\ c & d \end{pmatrix}, \qquad 
\delta = \prod_{i=1}^{k}[A_{i},B_{i}] = J_{-}\nu = \begin{pmatrix} -a+c & -b+d \\ -c & -d  \end{pmatrix}.
$$
Let $t_{1}=\tr \nu$, $t_{2}=\tr \delta$. In this case, $c=t_{2}+t_{1}$. Stratifying as in Section \ref{subsec:2}, we get

\begin{itemize}
\item $Z'_{1}$ given by $(t_{1},t_{2})=(2,-2)$. We obtain
$$
e(Z'_{1})=(q-2)e(\oX^h_{2})e(\oX_{3}^{k})+e(\oX_{2}^h)e(\oX_{1}^{k})+e(\oX_{0}^h)e(\oX_{3}^{k}).
$$
\item $Z'_{2}$ given by $(t_{1},t_{2})=(-2,2)$. Analogously
$$
e(Z'_{2})=(q-2)e(\oX^h_{3})e(\oX_{2}^{k}) +e(\oX^h_{3})e(\oX_{0}^{k})+e(\oX^h_{1})e(\oX_{2}^{k}).
$$
\item $Z'_{3}$ given by the two values $(t_{1},t_{2})=(2,2),(-2,-2)$. We get
$$
e(Z'_{3})=q(e(\oX^k_{2})e(\oX_{2}^{h})+e(\oX_{3}^k)e(\oX_{3}^{h})).
$$
\item $Z'_{4}$, divided into two possible cases:
\begin{itemize}
\item $Z'_{4,1}$ given by the lines $t_{1}=2$, $t_{2}\neq \pm 2$ and $t_{1}=-2$, $t_{2}\neq \pm 2$. We get
$$
e(Z'_{4,1})=q(e(\oX^h_{2})+e(\oX^h_{3}))e(\oX_{4}^{k}/\ZZ_2).
$$
\item $Z'_{4,2}$  given by the lines $t_{2}=2$, $t_{1}\neq \pm 2$ and $t_{2}=-2$, $t_{1}\neq \pm 2$. We obtain
$$
e(Z'_{4,2})=q(e(\oX_{2}^{k})+e(\oX_{3}^{k}))e(\oX_{4}^h/\ZZ_2).
$$
\end{itemize}
\item $Z'_{5}$ corresponding to $t_{1}=-t_{2}\neq \pm 2$. So $c=0$ and 
 $$
  \eta=\left(\begin{array}{cc}
      a & b\\
      0 & a^{-1}
    \end{array}\right), \ \delta=\left(\begin{array}{cc}
      -a & -b+a^{-1}\\
      0 & -a^{-1}
    \end{array}\right).
    $$
Therefore $e(Z'_5)=q\, e(\overline{Z}{}'_5)$, where $\overline{Z}{}'_{5}$ is a fibration over 
$a\in \CC- \{0,\pm 1 \}$ whose fibers are 
$\oX_{4,a}^k\times \oX_{4,-a}^{h}$. Thus the Hodge monodromy representation is given in (\ref{eqn:tres'}),
\begin{align*}
R(\overline{Z}{}'_{5}/\ZZ_2) & =A' T+ B'S_2+C'S_{-2}+D'S_0 ,\\
R(\overline{Z}{}'_{5}) & =(A'+D')T+(B'+C')N, \\
 e(Z'_5)  &= q\, e(\overline{Z}{}'_{5})=q((q-3)(A'+D')-2(B'+C')).
\end{align*}

\item $Z'_{6}$ corresponding to the open stratum $t_{1},t_{2}\neq \pm 2$, $t_{1}\neq -t_{2}$. The action of $U\cong \CC$ can be used to set $d=0$.
The total space, ignoring the condition $t_{1}\neq -t_{2}$, gives a contribution of $e(\oX_{4}^k)e(\oX_{4}^{h})$. 
The fibration over the diagonal $(t_{1},-t_{1})$ has total space isomorphic 
$\CC \times (\overline{Z}{}'_{5}/\ZZ_{2})$. Thus
\begin{align*}
e(\overline{Z}{}'_{5}/\ZZ_2) &=(q-2)A'-(B'+C'+D') ,\\
 e(Z'_{6}) &=q(e(\oX_{4}^k/\ZZ_2)e(\oX_{4}^{h}/\ZZ_2)-e(\overline{Z}'_{5}/\ZZ_2)).
\end{align*}
\end{itemize}

Adding all up,
\begin{align*}
e(\oX_{2}^{k+h}) = & \,e(\oX_{2}^k)e(\oX_{1}^{h})+e(\oX_{0}^k)e(\oX_{3}^{h})-2e(\oX^k_{2})e(\oX_{2}^{h})
+e(\oX_{3}^k)e(\oX_{0}^{h})+e(\oX_{1}^k)e(\oX_{2}^{h})-2e(\oX_{2}^k)e(\oX_{3}^{h}) \\
& +q\, (e(\oX^k_{2})+e(\oX^k_{3})+e(\oX_{4}^k/\ZZ_{2}))( e(\oX_{2}^{h})+ e(\oX_{3}^{h})+e(\oX_{4}^{h}/\ZZ_{2}))
+q\,( e(\overline{Z}{}'_{5})- e(\overline{Z}{}'_{5}/\ZZ_2)).
\end{align*}

Setting $k=g-1$, $h=1$, and substituting the values $A',B',C',D'$ from (\ref{eqn:tres'.5})
and the values of $e(\oX_j^1)$ and $a_1,b_1,c_1,d_1$ from Section \ref{sec:strata}, we have:
 \begin{align}\label{eqn:eX2.} \tag{$\eta$}
  e_3^g=e(\oX_3^{g}) = & \, 
 (q^3 +3 q^2 )e_0^{g-1}+ (q^3 -2 q^2-3q) e_1^{g-1} \nonumber \\ 
&+ (q^5-3 q^3- 6 q^2  ) e_2^{g-1}+  (q^5+q^4+ 3 q^2 + 3 q)e_3^{g-1}  \nonumber \\ 
& + (q^{6}-2q^5-3q^4+q^3+3q^2)a_{g-1} + (-q^5-q^4-4q^3+6q^2) b_{g-1} \nonumber \\
& +(-q^5+2q^4-4q^3+3q^2) c_{g-1}+(-2q^4-q^3+3q^2) d_{g-1}\nonumber 
 \end{align}

%
%

\section{Computation of $R(\oX_4^{k+h})$}\label{subsec:4}

Now we move to the stratum $\oX_4^g$. This one is controlled by a Hodge monodromy representation
$R(\oX_4^g/\ZZ_2)$. We start by computing $R(\oX_4^g)$. 
As before, we write
$$
\oX_{4}^{k+h}= \lbrace (A_{1},B_{1},\ldots, A_{k+h},B_{k+h},\lambda) \mid
 \prod_{i=1}^{k}[A_{i},B_{i}]= \xi_\lambda \prod_{i=1}^{h}[B_{k+i},A_{k+i}] , \lambda\neq 0, \pm 1 \},
$$
where $\xi_\lambda=\begin{pmatrix} \lambda & 0 \\ 0 & \lambda^{-1} \end{pmatrix}$.
We are going to study the fibration $\oX_{4}^{k+h}\to \CC-\{0,\pm 1\}$, with fiber
$\oX_{4,\lambda}^{k+h}$.
 Let
$$
\nu =\prod_{i=1}^{h}[B_{k+i},A_{k+i}] =\begin{pmatrix} a & b \\ c & d \end{pmatrix}, \qquad 
\delta = \prod_{i=1}^{k}[A_{i},B_{i}] = \xi_\lambda \nu =\begin{pmatrix} \lambda a & \lambda b \\ \lambda^{-1} c & \lambda^{-1}d \end{pmatrix}.
$$
Note that $t_{1}=\tr \nu$, $t_{2}= \tr \delta$ and $\lambda$ determine $a,d$, and $bc=ad-1$.

We follow the stratification in terms of the traces $(t_{1},t_{2})$ given in \cite[Section 10]{lomune} for the genus $2$ case.
We decompose $\oX_{4,\lambda}=\bigsqcup_{j=1}^7 Z_{j,\lambda}$, where
\begin{itemize}
\item $Z_{1,\lambda}$ corresponding to $t_{1}=\pm 2, t_{2}=\pm 2$. In this case both $\nu,\delta$ are of Jordan type. 
Focus on the case $(t_1,t_2)=(2,2)$, the other cases being similar. Taking an adequate basis,
$$
\nu=\begin{pmatrix} 1 & x \\ 0 & 1\end{pmatrix}, \quad \delta=\begin{pmatrix} 1 & 0 \\ y & 1\end{pmatrix},
$$
for certain $x,y\in \CC^*$. We can fix $x=1$ by rescaling the basis vectors.
Since $\delta\nu^{-1}=\begin{pmatrix} \lambda & 0 \\ 0 & \lambda^{-1} \end{pmatrix}$, we obtain that 
$\lambda+\lambda^{-1}=2-xy$ , so $y$ is also fixed. When varying $\lambda$, we see that there is no monodromy around the punctures. 
Therefore, taking also care of all four possibilities for $(t_1,t_2)$, we have
\begin{align*}
R(Z_{1}) & =e(Z_{1,\lambda})T \\ &=
(q-1)(e(\oX_{2}^k)e(\oX_{2}^{h})+e(\oX_{2}^k)e(\oX_{3}^{h})
+e(\oX_{3}^k)e(\oX_{2}^{h})+e(\oX_{3}^k)e(\oX_{3}^{h}))T \\
&=(q-1)(e(\oX_{2}^k)+e(\oX_{3}^k))(e(\oX_{2}^{h})+e(\oX_{3}^{h}))T ,
\end{align*}
where $T$ is the trivial representation.

\item $Z_{2,\lambda}$ corresponding to $(t_{1},t_2)=(2, \lambda+\lambda^{-1})$ and $(t_{1},t_{2})=(-2,-\lambda-\lambda^{-1})$. We focus on the first case. 
In this situation $bc=0$, so there are three possibilities: either $b=c=0$ (in which case $\nu=\Id$) or $b=0,c\neq 0$ or $b\neq 0,c=0$ (in either case 
there is a parameter in $\CC^*$ and $\nu\sim J_+$). In all cases, there is no monodromy for $\nu$ as $\lambda$ moves in 
$\CC-\{0,\pm 1\}$. On the other hand, 
$\delta\sim \begin{pmatrix} \lambda & 0 \\ 0 & \lambda^{-1} \end{pmatrix}$ gives a contribution $R(\oX^{h}_{4})$. 
The second case is analogous, changing $\Id$ by $-\Id$ and $J_{+}$ by $J_{-}$. Therefore
\begin{align*}
R(Z_{2}) & =  (e(\oX^k_{0})+2(q-1)e(\oX^k_{2})+e( \oX^k_{1})+2(q-1)e(\oX^k_{3}))R(\oX_{4}^{h}).
\end{align*}

\item $Z_{3,\lambda}$ corresponding to $(t_{1},t_2)=(\lambda+\lambda^{-1},2)$ and $(t_{1},t_{2})=(-\lambda-\lambda^{-1},-2)$. 
This is completely analogous to the previous case, so
\begin{align*}
R(Z_{3}) & = (e(\oX^h_{0})+2(q-1)e(\oX^h_{2})+e( \oX^h_{1})+2(q-1)e(\oX^h_{3}))R(\oX_{4}^{k}).
\end{align*}

\item $Z_{4,\lambda}$ defined by $t_{1}=2, t_{2}\neq \pm 2, \lambda+\lambda^{-1}$ and $t_{1}=-2, t_{2}\neq \pm 2, -\lambda-\lambda^{-1}$. 
For each $\lambda$, $(t_{1},t_{2})$ move in (two) punctured lines $\{ (t_{1},t_{2}) \mid t_{1}=\pm 2, t_{2}\neq \pm 2, \pm (\lambda+\lambda^{-1}) \}$, 
where $\nu$  is of Jordan
type and $\delta$ is of diagonal type. Both families can be trivialized, giving a contribution of 
$e(\oX_{2}^k)$ times $e(\oX_{4}^{h}/\mathbb{Z}_{2})$ for one line, and
$e(\oX_{3}^k)$ times $e(\oX^{h}_{4}/\ZZ_2)$ for the other line. The missing fiber $\oX_{4,\lambda}^{h}$ over $\lambda+\lambda^{-1}$, 
which needs to be removed, has monodromy given by $R(\oX_{4}^{h})$ as $\lambda$ varies. Finally, there is a $(q-1)$ factor due to the fact that $bc\neq 0$. Therefore
\begin{align*}
R(Z_{4}) = & (q-1)(e(\oX_{2}^k)+e(\oX_{3}^k))(e(\oX_{4}^{h}/\mathbb{Z}_{2})T-R(\oX^{h}_{4})) .
\end{align*}
 
\item $Z_{5,\lambda}$ defined by $t_{2}=2, t_{1}\neq \pm 2, \lambda +\lambda^{-1}$ and $t_{2}=-2, t_{1}\neq \pm 2, 
-\lambda-\lambda^{-1}$. Similarly to $Z_{4,\lambda}$, we obtain
\begin{align*}
R(Z_{5}) & = (q-1)(e(\oX_{2}^h)+e(\oX_{3}^h))(e(\oX_{4}^{k}/\mathbb{Z}_{2})T-R(\oX^{k}_{4})) .
\end{align*}

\item $Z_{6,\lambda}$. This stratum corresponds to the set $\{ (t_{1},t_{2})\mid t_{1},t_{2}\neq \pm 2, ad=1 \}$, 
which is a hyperbola $H_\lambda$ for every $\lambda$ (see \cite[Figure 1, Section 10]{lomune}).
There is a contribution of $2q-1$ which accounts for  $bc=0$. Parametrizing 
$H_\lambda$ by $\mu \in \CC^* -\{ \pm 1, \pm \lambda^{-1} \}$ as in \cite[Section 10]{lomune}, 
we obtain a fibration over $\CC^* -\{ \pm 1, \pm \lambda^{-1} \}$ 
whose fiber over $\mu$ is $\oX^k_{4,\mu}\times \oX^{h}_{4,\lambda\mu}$, for each $\lambda$. 
When $\lambda$ varies over $\CC -\{0, \pm 1 \}$, 
we can extend the local system trivially to the cases $\lambda,\mu = \pm 1$. This extension 
can be regarded as a local system over the set of $(\lambda,\mu) \in \CC^*\times \CC^*$,
 $$
 \overline{Z}_{6}=\oX^k_{4}\times m^{*}\oX^{h}_{4} \longrightarrow \CC^*\times\CC^*\, , 
 $$
where $m: \CC^*\times \CC^* \rightarrow \CC^*$ maps $(\lambda,\mu)\mapsto \lambda\mu$. 
The Hodge monodromy representation of $\overline{Z}_{6}$ belongs to $R(\zz)[q]$ (with generators 
$N_{1},N_{2}$ denoting the  representation which is not trivial over the generator of the 
fundamental group of the first and second copies of $\CC^*$, respectively, and
$N_{12}=N_1\otimes N_2$). So we get
\begin{align*}
R_{ \CC^*\times\CC^*} (\overline{Z}_{6}) & =((a_k+d_k)T+(b_k+c_k)N_{2})\otimes  ((a_h+d_h)T+(b_h+c_h)N_{12})  \\
& =(a_h+d_h)(a_k+d_k) T +(b_h+c_h)(b_k+c_k) N_{1} +(a_h+d_h) (b_k+c_k)  N_{2}+ (b_h+c_h)(a_k+d_k)N_{12}.
\end{align*}
To obtain the Hodge monodromy representation over $\lambda \in \CC^*$, we use the projection 
$\pi_{1}:\CC^*\times\CC^* \rightarrow \CC^*$, $(\lambda,\mu)\mapsto \lambda$, which maps
$T\mapsto e(T) T=(q-1)T$, $N_2 \mapsto e(N_2)T=0$, $N_1\mapsto e(T) N=(q-1)N$, 
$N_{12} \mapsto e(N_2)N=0$ for the representations. Therefore $R_{\CC^*}(\overline{Z}_6)=
(q-1)((a_h+d_h)(a_k+d_k) T +(b_h+c_h)(b_k+c_k) N)$. Now we have to substract the contribution 
from the sets $\mu=\pm1,\pm\lambda^{-1}$. The first two yield $-2e(\oX_4^k)R(\oX_4^h)$ and
the second two yield $-2e(\oX_4^h)R(\oX_4^k)$. Therefore
\begin{align*}
R(\overline{Z}_{6}) = &(q-1)((a_h+d_h)(a_k+d_k) T +(b_h+c_h)(b_k+c_k) N)-2e(\oX_{4,\lambda}^k)R(\oX_4^h)-2e(\oX_{4,\lambda}^h)R(\oX_4^k),\\
R(Z_6) = &(2q-1) R(\overline{Z}_{6}) .
\end{align*}

\item $Z_{7,\lambda}$ corresponding to the open stratum given by the set of $(t_{1},t_{2})$ such that $t_{i} \neq \pm 2$, 
$i=1,2$ and $(t_{1},t_{2})\not\in H_\lambda$. If we forget about the condition $(t_{1},t_{2})\in H_\lambda$, $Z_{7,\lambda}$ is a fibration over 
$(t_{1},t_{2})$ with fiber isomorphic to $\oX^h_{4,\mu_{1}}\times \oX^{k}_{4,\mu_{2}}$, $t_{i}=\mu_i+\mu^{-1}_i$, $i=1,2$. 
Its monodromy is trivial, as the local system is trivial when $\lambda$ varies. The contribution over $H_\lambda$, already computed in the previous
stratum, is $R(\overline{Z}_6)$. So we get
\begin{align*}
R(Z_{7}) =& (q-1)(e(\oX^k_{4}/\mathbb{Z}_{2})e(\oX^h_{4}/\ZZ_2)T-R(\overline{Z}_{6})) .
\end{align*}
\end{itemize}

Adding all the pieces, we get
\begin{align*}
R(\oX^{k+h}_{4}) =& 
 (q-1)(e(\oX_{2}^k)+e(\oX_{3}^k)+e(\oX^k_{4}/\mathbb{Z}_{2}))(e(\oX_{2}^{h})+e(\oX_{3}^{h})+e(\oX^h_{4}/\mathbb{Z}_{2}))T\nonumber \\
 &+ (e(\oX^k_{0})+e( \oX^k_{1})+(q-1)e(\oX^k_{2})+(q-1)e(\oX^k_{3}))R(\oX_{4}^{h}) \\
& +(e(\oX^h_{0})+e( \oX^h_{1})+(q-1)e(\oX^h_{2})+(q-1)e(\oX^h_{3}))R(\oX_{4}^{k}) + q R(\overline{Z}_{6}). \nonumber 
\end{align*}

Setting $k=g-1$, $h=1$, we have:
 \begin{align} \label{eqn:R4}
R(\oX^{g}_{4}) = & \, \Big(
(q^3-1)e_0^{g-1}+(q^3-1)e_1^{g-1}+(q^5-3q^3+2q^2)e_2^{g-1}+(q^5-3q^3+2q^2)e_3^{g-1} \nonumber \\
&+(q^6- 2q^5 -2q^4 + 4q^3- 3 q^2 +2)a_{g-1}
+(- q^5 -q^4 + 2q^3- 2 q^2 +q+1)(b_{g-1}+c_{g-1}) \nonumber\\ 
&+(-q^4-2q^2+2q+1)d_{g-1} \Big)T \\
&+ \Big( (3q^2-3q)e_0^{g-1}+(3q^2-3q)e_1^{g-1}+(3  q^3 - 6 q^2 + 3 q)e_2^{g-1}+(3  q^3 - 6 q^2 + 3 q)e_3^{g-1} \nonumber\\
&+(-6q^3+6q^2)(a_{g-1}+d_{g-1}) +(  4q^4-14q^3+ 10  q^2) (b_{g-1}+c_{g-1}) \Big)N \nonumber
\end{align}

\section{Computation of $R(\oX_4^{g}/\ZZ_2)$}\label{subsec:5}

%

\begin{lem} \label{lem:RR}
Suppose that  $R(\oX_4^k/\ZZ_2)=a_k T+b_k S_2+ c_k S_{-2}+ d_k S_0$, for all $k<g$.
 Then the Hodge monodromy representation $R(\oX^g_{4}/\mathbb{Z}_{2})$ is of the form
$R(\oX^g_{4}/\mathbb{Z}_{2})= a_gT+b_gS_{2}+c_gS_{-2}+d_gS_{0}$, for some polynomials $a_g,b_g,c_g,d_g\in \ZZ[q]$.
\end{lem}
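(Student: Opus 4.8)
The statement to prove is Lemma~\ref{lem:RR}: that $R(\oX_4^g/\ZZ_2)$ is again of the form $a_g T + b_g S_2 + c_g S_{-2} + d_g S_0$ with $a_g,b_g,c_g,d_g \in \ZZ[q]$, i.e.\ that the inductive hypothesis is propagated. The key input is already in hand: equation~(\ref{eqn:R4}) gives $R(\oX_4^g)$ explicitly as $PT + QN$, where $P,Q\in\ZZ[q]$ are concrete polynomials built from $e_j^{g-1}$ and $a_{g-1},b_{g-1},c_{g-1},d_{g-1}$. This describes the fibration $\oX_4^g \to \CC-\{0,\pm 1\}$, with monodromy group $\ZZ_2$ generated by $N$ (nontrivial and of order two around the origin). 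The task is to push this down through the free $\ZZ_2$-action $(A_i,B_i,\lambda)\mapsto(P_0^{-1}A_iP_0,\dots,\lambda^{-1})$ to obtain the monodromy representation of $\oX_4^g/\ZZ_2 \to (\CC-\{0,\pm1\})/\ZZ_2 \cong \CC-\{\pm 2\}$, whose monodromy group is $\Gamma=\ZZ_2\times\ZZ_2 = \langle S_2,S_{-2}\rangle$.

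\textbf{Main steps.} First I would make precise how the $\ZZ_2$-quotient relates the base $\CC-\{0,\pm 1\}$ (coordinate $\lambda$) to $\CC-\{\pm 2\}$ (coordinate $t=\lambda+\lambda^{-1}$): this is the double cover branched nowhere on these punctured bases, with the two punctures $\lambda=1$ and $\lambda=-1$ mapping to $t=2$ and $t=-2$ respectively, and $\lambda=0,\infty$ (the ``$N$'' puncture) descending to the covering data. The generator $\gamma_{+2}$ of $\pi_1(\CC-\{\pm 2\})$ lifts to a loop that is \emph{not} closed upstairs (it swaps sheets), while $\gamma_{-2}$ similarly corresponds to the branch/covering structure; concretely, the representation $S_0=S_2\otimes S_{-2}$ pulls back to the nontrivial $N$ on the cover, while $T$ pulls back to $T$. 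This is exactly the relationship already used implicitly in Section~\ref{subsec:0}, where the $2:1$ cover $\CC-\{0,\pm1\}\to\CC-\{\pm2\}$ was used to pass between $R(\oW_4/\ZZ_2)=AT+BS_2+CS_{-2}+DS_0$ and $R(\oW_4)=(A+D)T+(B+C)N$. The content of the lemma is that this correspondence is invertible at the level of the four-coefficient data: given $R(\oX_4^g)=PT+QN$ together with the induced action, the four numbers $a_g,b_g,c_g,d_g$ are determined and lie in $\ZZ[q]$.

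\textbf{Carrying it out.} The cleanest route is to verify that $R(\oX_4^g/\ZZ_2)$ \emph{lies in} $R(\Gamma)[q]$ by checking the two hypotheses of Theorem~\ref{thm:general-fibr}: that the monodromy group is abelian and finite, and that the relevant cover is of balanced type. The monodromy group of $\oX_4^g/\ZZ_2 \to \CC-\{\pm 2\}$ is a subgroup of $\Gamma=\ZZ_2\times\ZZ_2$ (generated by the two loops $\gamma_{\pm 2}$), hence abelian and finite; this is the structural fact that forces the decomposition into the four one-dimensional characters $T,S_2,S_{-2},S_0$. That $\oX_4^g$ and its $\ZZ_2$-quotient are of balanced type follows because every stratum $Z_{j,\lambda}$ entering the computation of Section~\ref{subsec:4} was built from products of the balanced-type pieces $\oX_0^k,\oX_1^k,\oX_2^k,\oX_3^k$ and the inductively-balanced $\oX_4^k/\ZZ_2$, together with fibrations over rational bases $\CC^*,\CC^*\times\CC^*$ satisfying the hypotheses of Theorem~\ref{thm:general-fibr}. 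Once membership in $R(\Gamma)[q]$ is established, the coefficients $a_g,b_g,c_g,d_g$ are the (integer-polynomial) multiplicities of the four irreducible characters, so they automatically lie in $\ZZ[q]$, completing the lemma. Explicit formulas for $a_g,b_g,c_g,d_g$ in terms of $P,Q$ and the branching data then follow by the same inversion as in~(\ref{eqn:RX4-RX4Z2->eX4}), but for the lemma's qualitative claim this explicit inversion is not needed.

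\textbf{The main obstacle.} The delicate point is establishing that the monodromy of the quotient fibration really takes values in the full $\Gamma=\ZZ_2\times\ZZ_2$ in a way compatible with the generators $S_2,S_{-2}$ rather than only producing $T$ and $S_0$. This requires tracking the local monodromy of $R(\oX_4^g)$ near $\lambda=\pm 1$: the coefficients $e_j^{g-1}$ and $a_{g-1},\dots,d_{g-1}$ feed in through equation~(\ref{eqn:R4}), and one must confirm that the $\ZZ_2$-action interchanging $\lambda\leftrightarrow\lambda^{-1}$ interacts with the punctures $t=\pm 2$ so as to separate the $S_2$ and $S_{-2}$ contributions. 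This is precisely the role the branched-cover bookkeeping of Section~\ref{subsec:0} plays, and it is the step where the induction could a priori fail to close; verifying it is the crux of the proof.
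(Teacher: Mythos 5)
Your proposal assembles the right ingredients (the double cover $\CC-\{0,\pm1\}\to\CC-\{\pm2\}$ and the computed $R(\oX_4^g)$ from (\ref{eqn:R4})), but it leaves a genuine gap exactly where the proof lives. You assert that ``the monodromy group of $\oX_4^g/\ZZ_2\to\CC-\{\pm2\}$ is a subgroup of $\Gamma=\ZZ_2\times\ZZ_2$, hence abelian and finite'' --- but that is precisely the statement to be proved: $\pi_1(\CC-\{\pm2\})$ is a free group on $\gamma_2,\gamma_{-2}$, so a priori the monodromy could be any quotient of it (nonabelian, or with elements of order $>2$, or infinite). You then flag this verification as ``the crux'' in your last paragraph and never carry it out. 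The paper's proof is exactly this missing deduction: pulling back under the double cover turns the quotient fibration into $\oX_4^g\to\CC-\{0,\pm1\}$, and (\ref{eqn:R4}) shows its Hodge monodromy involves only $T$ and $N$, i.e.\ it is trivial around $\lambda=\pm1$ and of order $\le 2$ around $\lambda=0$. Since a loop around $\lambda=\pm1$ is conjugate to $\gamma_{\pm2}^{\,2}$, and a loop around $\lambda=0$ maps to a conjugate of $(\gamma_2\gamma_{-2})^{\pm1}$, this gives $\rho(\gamma_2)^2=\rho(\gamma_{-2})^2=\Id$ and $(\rho(\gamma_2)\rho(\gamma_{-2}))^2=\Id$; two involutions whose product is an involution commute, so the monodromy image is a quotient of $\ZZ_2\times\ZZ_2$, and therefore $R(\oX_4^g/\ZZ_2)$ lies in $R(\ZZ_2\times\ZZ_2)[q]$, which is the lemma.

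Two further statements in your write-up are incorrect and would derail the execution. First, the covering correspondence is misstated: $S_0=S_2\otimes S_{-2}$ pulls back to the \emph{trivial} representation $T$ on the cover (its monodromy around $\gamma_{\pm2}^{\,2}$ and around $\gamma_2\gamma_{-2}$ is $(-1)(-1)=1$), while $S_2$ and $S_{-2}$ both pull back to $N$; this is exactly what the formula you quote, $R(\oW_4)=(A+D)T+(B+C)N$, encodes. Second, and relatedly, the lemma does \emph{not} say that $a_g,b_g,c_g,d_g$ are ``determined'' by $R(\oX_4^g)=PT+QN$: the pullback map $R(\ZZ_2\times\ZZ_2)[q]\to R(\ZZ_2)[q]$ is not injective (it only yields $a_g+d_g=P$ and $b_g+c_g=Q$), which is precisely why the paper must afterwards produce two additional equations --- by computing $e(\oX_0^{g+1})$ and $e(\oX_1^{g+1})$ in two different ways --- to solve for the four coefficients. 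Finally, your stated ``main obstacle'' (that the monodromy might produce only $T$ and $S_0$ rather than the full $\Gamma$) is harmless for the lemma: that would simply mean $b_g=c_g=0$. The danger to be excluded is the opposite one, namely that the monodromy is larger than $\ZZ_2\times\ZZ_2$, and that exclusion is the entire content of the proof.
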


\begin{proof}
 The Hodge monodromy representation $R(\oX^g_{4}/\mathbb{Z}_{2})$ lies in the representation ring of the
 fundamental group of $\CC-\{\pm 2\}$. Under the double cover $\CC-\{0,\pm 1\} \to \CC-\{ \pm 2\}$, it reduces
to $R(\oX^g_{4})$. By Section \ref{subsec:4},  $R(\oX^g_{4})$ is of order $2$. Hence
$R(\oX^g_{4}/\mathbb{Z}_{2})$ has only monodromy of order $2$ over the loops $\gamma_{\pm 2}$ 
around the points $\pm 2$. This is the statement of the lemma.
\end{proof}

\begin{prop} \label{prop:balanced}
All $\oX_0^g$, $\oX_1^g$, $\oX_2^g$, $\oX_3^g$, $\oX_4^g$ and $\oX_{4,\lambda}^g$ are of balanced type.
\end{prop}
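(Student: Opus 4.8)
The plan is to prove Proposition \ref{prop:balanced} by an induction on $g$ that parallels the inductive structure already set up for computing the E-polynomials. The base cases $g=1$ and $g=2$ are handled by the explicit data collected in Section \ref{sec:strata}: the listed E-polynomials $e(\oX_j)$ and $e(\overline{Y}_j)$ are genuine polynomials in $q=uv$, and the Hodge monodromy representations $R(\oX_4/\ZZ_2)$ and $R(\overline{Y}_4/\ZZ_2)$ have coefficients in $\ZZ[q]$, which is precisely what balancedness means for these spaces (and for the fibres $\oX_{4,\lambda}$, whose invariant and anti-invariant parts are again polynomials in $q$). The genus $3$ case $\oX_j^3$ is supplied by \cite{mamu}, so the induction can be taken to start at $g=3$.

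For the inductive step I would fix $g\geq 4$, set $k=g-1$ and $h=1$, and invoke the inductive hypothesis that $\oX_0^{g-1},\ldots,\oX_4^{g-1}$ and $\oX_{4,\lambda}^{g-1}$ are all of balanced type; equivalently, that $e_0^{g-1},\ldots,e_3^{g-1}$ and $a_{g-1},b_{g-1},c_{g-1},d_{g-1}$ all lie in $\ZZ[q]$. The formulas $(\alpha)$, $(\beta)$, $(\gamma)$, $(\eta)$ and equation (\ref{eqn:R4}) then express $e_0^g,e_1^g,e_2^g,e_3^g$ and $R(\oX_4^g)$ as explicit $\ZZ[q]$-combinations of these quantities, so each of $\oX_0^g,\oX_1^g,\oX_2^g,\oX_3^g$ is automatically of balanced type and $R(\oX_4^g)$ has coefficients in $\ZZ[q]$. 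The content of Lemma \ref{lem:RR} is exactly that this forces $R(\oX_4^g/\ZZ_2)=a_gT+b_gS_2+c_gS_{-2}+d_gS_0$ with $a_g,b_g,c_g,d_g\in\ZZ[q]$, which closes the induction for both $\oX_4^g/\ZZ_2$ and hence, via the relation $e(\oX_{4,\lambda}^g)=a_g+b_g+c_g+d_g$ (the full E-polynomial of the fibre) and $e(\oX_{4,\lambda}^g)^{inv}=a_g$, for the fibre $\oX_{4,\lambda}^g$ itself.

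The one genuinely non-formal point is the transition from $R(\oX_4^g)$ being of balanced type to $\oX_4^g$ (and the fibres) being of balanced type, since $\oX_4^g$ is the total space of an analytically locally trivial fibration over $\CC-\{0,\pm1\}$ rather than a single fibre. This is where I would appeal to Theorem \ref{thm:general-fibr}: the base $\CC-\{0,\pm1\}$ (and its $\ZZ_2$-quotient $\CC-\{\pm2\}$) is a rational curve, hence of balanced type, and once $R(\oX_4^g/\ZZ_2)$ is known to lie in $R(\Gamma)[q]$ with the finite abelian monodromy group $\Gamma=\ZZ_2\times\ZZ_2$, the theorem guarantees that the total space $\oX_4^g/\ZZ_2$, and therefore $\oX_4^g$ and each fibre $\oX_{4,\lambda}^g$, are of balanced type. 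I expect the main obstacle to be purely bookkeeping: verifying that every coefficient appearing in $(\alpha)$--$(\eta)$ and (\ref{eqn:R4}), together with the operations in Lemma \ref{lem:RR}, keeps one strictly inside $\ZZ[q]$ (so that no off-diagonal Hodge numbers $h^{k,p,q}$ with $p\neq q$ can appear), rather than any conceptual difficulty; the structural work has already been done in constructing the recursion, and balancedness is inherited at each stratum because additivity of the E-polynomial over the stratifications $\oX_j^{k+h}=\bigsqcup W_i$ preserves the property.
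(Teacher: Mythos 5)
Your induction has the same skeleton as the paper's proof (base cases $g=1,2$ from \cite{lomune}, induction on $g$, and Lemma \ref{lem:RR} plus Theorem \ref{thm:general-fibr} to handle the $\oX_4$ part), but the step you use to propagate balancedness is logically flawed, and it is used at every stage. You repeatedly infer that a variety is of balanced type because its E-polynomial (or the coefficients of its Hodge monodromy representation) lies in $\ZZ[q]$: for the base cases, for $\oX_0^g,\ldots,\oX_3^g$ via the formulas $(\alpha)$--$(\eta)$, and for $\oX_{4,\lambda}^g$ via $e(\oX_{4,\lambda}^g)=a_g+b_g+c_g+d_g$. This implication is false in general: balanced type means that \emph{every} mixed Hodge number $h^{k,p,q}_c$ with $p\neq q$ vanishes, whereas the E-polynomial only records the alternating sums $\sum_k(-1)^k h^{k,p,q}_c$, so off-diagonal classes sitting in different cohomological degrees can cancel and leave no trace in $e(Z)$. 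In the paper's definition, balancedness is a property of the mixed Hodge structure; membership of $e(Z)$ in $\ZZ[q]$ is a necessary consequence of it, not an equivalent reformulation. For the same reason, your closing remark that ``additivity of the E-polynomial over the stratifications preserves the property'' does not do the work you need: what is additive, and what actually carries the proof, is balancedness itself, namely \cite[Proposition 2.8]{lomune} (if $Z=\bigsqcup Z_i$ with all locally closed strata $Z_i$ balanced, then $Z$ is balanced). The paper's proof therefore argues geometrically rather than from the formulas: by the inductive hypothesis each stratum appearing in Sections \ref{subsec:0}--\ref{subsec:4} is a product of genus $<g$ pieces with balanced spaces such as $\PGL(2,\CC)/U$ or $\PGL(2,\CC)/D$, or a $\ZZ_2$-quotient of such (which preserves balancedness), or a fibration covered by Theorem \ref{thm:general-fibr}; additivity of balancedness over these stratifications then gives the claim for $\oX_0^g,\ldots,\oX_3^g$ and $\oX_{4,\lambda}^g$, and finally (\ref{eqn:R4}) together with Theorem \ref{thm:general-fibr} gives it for $\oX_4^g$.

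A second, related problem is that your treatment of the fibre $\oX_{4,\lambda}^g$ runs backwards. In Theorem \ref{thm:general-fibr} balancedness of the fibre $F$ is a \emph{hypothesis} --- indeed the Hodge monodromy representation (\ref{eqn:Hodge-mon-rep}) is only defined in terms of the groups $H^{k,p,p}_c(F)$ of an already-balanced fibre --- so you cannot obtain balancedness of $\oX_{4,\lambda}^g$ as a \emph{conclusion} of that theorem, nor deduce it from $R(\oX_4^g/\ZZ_2)$ having coefficients in $\ZZ[q]$; Lemma \ref{lem:RR} likewise presupposes this framework rather than supplying it. The fix is the paper's order of deduction: first establish that the fibre $\oX_{4,\lambda}^g$ is balanced directly from its stratification into the pieces $Z_{1,\lambda},\ldots,Z_{7,\lambda}$ of Section \ref{subsec:4}, each assembled from genus $<g$ data that is balanced by induction, and only then invoke Theorem \ref{thm:general-fibr} for the fibration over $\CC-\{\pm 2\}$ (whose monodromy group $\ZZ_2\times\ZZ_2$ is finite abelian and whose trivializing cover is rational, hence balanced) to conclude that the total space $\oX_4^g$, and $\oX_4^g/\ZZ_2$, are of balanced type.
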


\begin{proof}
 By \cite[Proposition 2.8]{lomune}, if $Z= \bigsqcup Z_i$ and all $Z_i$ are of balanced type, then $Z$ is of balanced type.
Also, if $\ZZ_2$ acts on $Z$ and $Z$ is of balanced type, so is $Z/\ZZ_2$. Also Theorem  \ref{thm:general-fibr} says
that if $F\to Z \to B$ is a fibration with $F$ of balanced type, with
either $B=\CC-\{0,\pm 1\}$ and Hodge monodromy $R(Z)=a T+ b N$ or 
$B=\CC-\{\pm 2\}$ and Hodge monodromy $R(Z)=a T+ b S_2+cS_{-2}+dS_0$,  then $Z$ is of balanced type.

In \cite{lomune} it is proved that the result holds for $g=1,2$. Also $\SL(2,\CC)$, $\PGL(2,\CC)$, $\PGL(2,\CC)/D$,
$\PGL(2,\CC)/U$ are of balanced type. Now assume that all 
 $\oX_0^k$, $\oX_1^k$, $\oX_2^k$, $\oX_3^k$, $\oX_4^k$ and $\oX_{4,\lambda}^k$ are of balanced type
for $k<g$. A look at the description of all strata for which we compute the E-polynomials
 in Sections \ref{subsec:0}--\ref{subsec:3} convinces us that $\oX_0^g$, $\oX_1^g$, $\oX_2^g$, $\oX_3^g$
are of balanced type. The same is true for $\oX_{4,\lambda}^g$ by the stratification in Section \ref{subsec:4}. 
Finally formula (\ref{eqn:R4}) gives us that $\oX_4^g$ is also of balanced type.
\end{proof}

\medskip

Now to find the four polynomials  $a_g,b_g,c_g,d_g\in \ZZ[q]$, we need four equations.
Two come from the fact that $R(\oX^g_{4})=(a_g+d_g)T+(b_g+c_g)N$. From (\ref{eqn:R4}), we have
\begin{align}\label{eqn:aaa}
 a_g+d_g =&
(q^3-1)e_0^{g-1}+(q^3-1)e_1^{g-1}+(q^5-3q^3+2q^2)e_2^{g-1}+(q^5-3q^3+2q^2)e_3^{g-1} \\
&+(q^6- 2 q^5 -2 q^4 + 4 q^3- 3 q^2 +2)a_{g-1}
+(-q^5-q^4+2q^3-2q^2+q+1) (b_{g-1}+c_{g-1})  \nonumber\\ &+(-q^4-2q^2+2q+1)d_{g-1}  \nonumber\\
 b_g+d_g =& (3q^2-3q)e_0^{g-1}+(3q^2-3q)e_1^{g-1}+(3  q^3 - 6 q^2 + 3 q)e_2^{g-1}+(3  q^3 - 6 q^2 + 3 q)e_3^{g-1} 
\label{eqn:bbb}\\
&+(-6q^3+6q^2)(a_{g-1}+d_{g-1}) +(  4q^4-14q^3+ 10  q^2) (b_{g-1}+c_{g-1}).\nonumber
\end{align}

One more equation is obtained by computing $e(\oX_0^{g+1})$ with $k=g$, $h=1$ and with $k=g-1$, $h=2$, and
equating. From ($\alpha$) with $k=g$, we get
 \begin{align}\label{eqn:cc}
 e_0^{g+1}= & \, (q^4 + 4 q^3 - q^2 - 4 q) e_0^{g}+ (q^3-q) e_1^{g} \nonumber \\ 
&+ ( q^5- 2 q^4 - 4 q^3+ 2 q^2  +3 q ) e_2^{g}+ (q^5+3q^4-q^3-3q^2) e_3^{g}  \\ 
&+ (q^6-2q^5-4q^4+3q^2+2q) a_{g} + (-q^5-4q^4+4q^2+q)b_{g}   \nonumber\\
&+ (2q^5-7q^4-3q^3+7q^2+q) c_{g} + (-5q^4-q^3+5q^2+q) d_{g} \nonumber
 \end{align}
Using (\ref{eqn:eX0}) with $k=g-1$, $h=2$, and the 
 values of $e(\oX_j^2)$ and $a_2,b_2,c_2,d_2$ from Section \ref{sec:strata}, we have
 \begin{align}\label{eqn:cc'}
 e_0^{g+1}= & \, (q^9+ q^8+12q^7+2q^6-3q^4-12q^3-q) e_0^{g-1}
+ (q^9-3q^7-30q^6+30q^4+3q^3-q) e_1^{g-1}\nonumber \\ 
&+ ( q^{11}-4q^9-4q^8-36q^7+24q^5+4q^4+15q^3)e_2^{g-1} \nonumber \\ 
&+ (q^{11}-4q^9+15q^8+9q^7+30q^6-6q^5-45q^4) e_3^{g-1}  \nonumber \\ 
&+ (q^{12}- 2 q^{11}-  4 q^{10}+ 6 q^9 - 6 q^8+ 18 q^7 - 6 q^6 - 18 q^5+ 15 q^4- 6 q^3+2 q ) a_{g-1}  \\
&+ (- q^{11}- q^{10}+ 3 q^9 - 42 q^8+ 54 q^7 + 30 q^6- 54 q^5 + 12 q^4 - 3 q^3 + q^2 +q )b_{g-1} \nonumber \\
&+ (- q^{11}- q^{10}+ 18 q^9 - 27 q^8 + 24 q^7- 39 q^5+ 27 q^4- 3 q^3+ q^2 +q  ) c_{g-1} \nonumber\\
&+ (- 2 q^{10}- 9 q^8 + 24 q^7 - 21 q^5+ 9 q^4  - 4 q^3+ 2 q^2 +q  ) d_{g-1} \nonumber
 \end{align}

A fourth equation are obtained by computing $e(\oX_1^{g+1})$ with $k=g$, $h=1$ and with $k=g-1$, $h=2$, and
equating. From ($\beta$) with $k=g$, we get
 \begin{align}\label{eqn:dd}
  e_1^{g+1}=& \,  (q^3-q)e_0^{g}+ (q^4 + 4 q^3 - q^2 - 4 q) e_1^{g} \nonumber\\ 
&+  (q^5+3q^4-q^3-3q^2)e_2^{g}+ ( q^5- 2 q^4 - 4 q^3+ 2 q^2  +3 q ) e_3^{g}  \\ 
& +(q^6-2q^5-4q^4+3q^2+2q) a_{g} + (2q^5-7q^4-3q^3+7q^2+q) b_{g} \nonumber\\
&+ (-q^5-4q^4+4q^2+q) c_{g} + (-5q^4-q^3+5q^2+q) d_{g} \nonumber
 \end{align}
Using (\ref{eqn:eX1}) with $k=g-1$, $h=2$, and the 
 values of $e(\oX_j^2)$ and $a_2,b_2,c_2,d_2$ from Section \ref{sec:strata}, we have
 \begin{align}\label{eqn:dd'}
 e_1^{g+1}= & \, (q^9-3q^7-30q^6+30q^4+3q^3-q) e_0^{g-1} +(q^9+ q^8+12q^7+2q^6-3q^4-12q^3-q) e_1^{g-1} \nonumber\\ 
&+ (q^{11}-4q^9+15q^8+9q^7+30q^6-6q^5-45q^4) e_2^{g-1} \nonumber \\ 
&+ (q^{11}-4q^9-4q^8-36q^7+24q^5+4q^4+15q^3)e_3^{g-1} \\
&+ (q^{12}- 2 q^{11}-  4 q^{10}+ 6 q^9 - 6 q^8+ 18 q^7 - 6 q^6 - 18 q^5+ 15 q^4- 6 q^3+2 q ) a_{g-1} \nonumber\\
&+ (- q^{11}- q^{10}+ 18 q^9 - 27 q^8 + 24 q^7- 39 q^5+ 27 q^4- 3 q^3+ q^2 +q  ) c_{g-1} \nonumber\\
&+ (- q^{11}- q^{10}+ 3 q^9 - 42 q^8+ 54 q^7 + 30 q^6- 54 q^5 + 12 q^4 - 3 q^3 + q^2 +q )b_{g-1}   \nonumber\\
&+ (- 2 q^{10}- 9 q^8 + 24 q^7 - 21 q^5+ 9 q^4  - 4 q^3+ 2 q^2 +q  ) d_{g-1} \nonumber
 \end{align}

The solutions to (\ref{eqn:aaa}), (\ref{eqn:bbb}), (\ref{eqn:cc})=(\ref{eqn:cc'}) and (\ref{eqn:dd})=(\ref{eqn:dd'}),
and using the values of ($\alpha$), ($\beta$), ($\gamma$) and ($\delta$), are given by
\begin{align*}
 a_g =& q^3 e_{0}^{g-1} + q^3 e_{1}^{g-1} +(q^5-3q^3) e_{2}^{g-1} + (q^5-3q^3)e_{3}^{g-1} \\
 &  + (q^6-2q^5-2q^4+4q^3+q^2)a_{g-1} + (-q^5-q^4+2q^3)b_{g-1} +(-q^5-q^4+2q^3)c_{g-1} -2q^4 d_{g-1}\\
 b_g =&-3q e_{0}^{g-1} +3q^2 e_{1}^{g-1} +(3q^3+3q) e_{2}^{g-1} -6q^2 e_{3}^{g-1} \\
 & +(-3q^3+3q^2)a_{g-1} + (4q^4-6q^3+4q^2)b_{g-1} + (-8q^3+6q^2)c_{g-1} + (q^2-3q^3+3q^2)d_{g-1} \\
 c_g =& 3q^2 e_{0}^{g-1} -3q e_{1}^{g-1} -6q^2 e_{2}^{g-1} +(3q^3+3q) e_{3}^{g-1}\\
 & (-3q^3+3q^2)a_{g-1} + (-8q^3+6q^2)b_{g-1} + (4q^4-6q^3+4q^2) c_{g-1} + (-3q^3+3q^2) d_{g-1} \\
 d_g =& -e_{0}^{g-1} -e_{1}^{g-1} +2q^{2}e_{2}^{g-1} +2q^{2}e_{3}^{g-1}\\
 & +(-4q^2+2)a_{g-1} + (-2q^2+q+1)b_{g-1} + (-2q^{2}+q+1)c_{g-1} + (q^{4}-2q^{2}+2q+1)d_{g-1}
\end{align*}
We put this together with equations ($\alpha$), ($\beta$), ($\gamma$) and ($\delta$)
\begin{align*}
 e^g_0 =& (q^4 + 4 q^3 - q^2 - 4 q) e_0^{g-1}+ (q^3-q) e_1^{g-1} + ( q^5- 2 q^4 - 4 q^3+ 2 q^2  +3 q ) e_2^{g-1}+ (q^5+3q^4-q^3-3q^2) e_3^{g-1}  \\ 
&+ (q^6-2q^5-4q^4+3q^2+2q) a_{g-1} + (-q^5-4q^4+4q^2+q)b_{g-1} \\
&+ (2q^5-7q^4-3q^3+7q^2-q) c_{g-1} + (-5q^4-q^3+5q^2-q) d_{g-1}\\
 e^g_1  =& (q^3-q)e_0^{g-1}+ (q^4 + 4 q^3 - q^2 - 4 q) e_1^{g-1} +  (q^5+3q^4-q^3-3q^2)e_2^{g-1}+ ( q^5- 2 q^4 - 4 q^3+ 2 q^2  +3 q ) e_3^{g-1}  \\ 
&+(q^6-2q^5-4q^4+3q^2+2q) a_{g-1} + (2q^5-7q^4-3q^3+7q^2+q) b_{g-1} \\
&+ (-q^5-4q^4+4q^2+q) c_{g-1} + (-5q^4-q^3+5q^2+q) d_{g-1} \\
 e^g_2 =& (q^3 - 2 q^2 - 3 q)e_0^{g-1}+ (q^3 + 3 q^2) e_1^{g-1} +  (q^5+q^4+ 3 q^2 + 3 q)e_2^{g-1}+ (q^5-3 q^3- 6 q^2  ) e_3^{g-1}  \\ 
& +(q^6-2q^5-3q^4+q^3+3q^2)a_{g-1}+(-q^5+2q^4-4q^3+3q^2)  b_{g-1} \\
& +(-q^5-q^4-4q^3+6q^2) c_{g-1}+(-2q^4-q^3+3q^2)d_{g-1} \\
 e^g_3=& (q^3 +3 q^2 )e_0^{g-1}+ (q^3 -2 q^2-3q) e_1^{g-1} + (q^5-3 q^3- 6 q^2  ) e_2^{g-1}+  (q^5+q^4+ 3 q^2 + 3 q)e_3^{g-1}   \\ 
& + (q^{6}-2q^5-3q^4+q^3+3q^2)a_{g-1} + (-q^5-q^4-4q^3+6q^2) b_{g-1}  \\
& +(-q^5+2q^4-4q^3+3q^2) c_{g-1}+(-2q^4-q^3+3q^2) d_{g-1}
\end{align*}
Hence there is a $8\x 8$-matrix $M$ such that if we write 
$v_g=( e^g_0, e^g_1, e^g_2, e^g_3,a_g,b_g,c_g,d_g)^t$,
\begin{equation}\label{eqn:matrix}
v_g =M v_{g-1},
\end{equation}
for all $g\geq 3$. $M$ is the following matrix
\begin{equation} \label{matrixg-1-->g}
\scalemath{0.68}{\left( \begin{array}{c@{\hspace{2em}}c@{\hspace{2em}}c@{\hspace{2em}}c@{\hspace{2em}}c@{\hspace{2em}}c@{\hspace{2em}}c@{\hspace{2em}}c}
 q^4+4q^3 & q^3-q & q^5 -2q^4-4q^3 & q^5+3q^4   & q^6 -2q^5 -4q^4 & -q^5-4q^4 & 2q^5 -7q^4 -3q^3 & -5q^4-q^3  \\
 -q^2-4q  &       & +2q^2+3q       & -q^3 -3q^2 & +3q^2 +2q       & +4q^2+q  & +7q^2+q         & +5q^2 +q    \\
& & & & & & & \\ 
q^3 -q & q^4 +4q^3 & q^5+3q^4  & q^5-2q^4-4q^3 & q^6-2q^5-4q^4 & 2q^5-7q^4-3q^3 & -q^5-4q^4 & -5q^4-q^3 \\
       & -q^2-4q   & -q^3-3q^2 & +2q^2+3q      & +3q^2+2q      & +7q^2 +q       & +4q^2+q   &  +5q^2+q \\
 & & & & & & & \\
q^3-2q^2 & q^3+3q^2 & q^5+q^4  & q^5 -3q^3 & q^6-2q^5-3q^4 & -q^5+2q^4  & -q^5 -q^4   & -2q^4 -q^3 \\
-3q      &          & +3q^2+3q & -6q^2     & +q^3+3q^2     & -4q^3+3q^2 & -4q^3 +6q^2 & +3q^2 \\
 & & & & & & & \\
q^3+3q^2 & q^3-2q^2 & q^5-3q^3 & q^5+q^4  & q^6-2q^5-3q^4 & -q^5-q^4   & -q^5 +2q^4  & -2q^4 -q^3 \\
         & -3q      & -6q^2    & +3q^2+3q & +q^3+3q^2     & -4q^3+6q^2 & -4q^3 +3q^2 & +3q^2 \\
 & & & & & & & \\
q^3 & q^3 & q^5-3q^3 & q^5-3q^3 & q^6 -2q^5-2q^4 & -q^5-q^4 & -q^5-q^4 & -2q^4 \\
    &     &          &          & +4q^3+q^2      & +2q^3    & +2q^3    &       \\
 & & & & & & & \\
-3q & 3q^2 & 3q^3+3q & -6q^2 & -3q^3+3q^2 & 4q^4-6q^3 +4q^2 & -8q^3+6q^2 & -3q^3+3q^2 \\
& & & & & & & \\
3q^2 & -3q & -6q^2 & 3q^3+3q & -3q^3+3q^2 & -8q^3+6q^2 & 4q^4-6q^3+4q^2 & -3q^3+3q^2 \\
& & & & & & & \\
-1 & -1 & 2q^2 & 2q^2 & -4q^2+2 & -2q^2+q+1 & -2q^2+q+1 & q^4 -2q^2 \\
   &    &      &      &         &           &           & +2q+1 
\end{array} \right)}.
\end{equation}
The starting vector is given in Section \ref{sec:strata},
$ v_2= (e_0^2,e_1^2,e_2^2,e_3^2, a_2,b_2,c_2,d_2)^t=
( q^9+q^8+12q^7+2q^6-3q^4-12q^3-q, 
q^9-3q^7-30q^6+30q^4+3q^3-q,
q^9-3q^7-4q^6-39q^5-4q^4-15q^3,
q^9-3q^7+15q^6+6q^5+45q^4,
q^9-3q^7+6q^5,-(45q^5+15q^3),15q^6+45q^4,-6q^4+3q^2-1)^t$.
If we write $v_1=(e_0^1,e_1^1,e_2^1,e_3^1, a_1,b_1,c_1,d_1)^t=
( q^{4}+4q^{3}-q^2-4q,q^{3}-q , q^3-2q^2-3 q,q^3 +3 q^2 ,q^{3},-3q,3q^{2},-1)^t$
and  
$$
 v_0=(1,0,0,0,0,0,0,0)^t,
$$
then equation (\ref{eqn:matrix}) holds for all $g\geq 1$. So
  $$
 v_g=M^g v_0 .
 $$

\begin{rem}
As in \cite{lomune}, we can stratify $\sldos^{2g} = \bigsqcup_{i=0}^{4} X_{i}^{g}$, with
\begin{itemize}
\item $X_0^g=\oX_{0}^{g}$, $e(X_0^g)=e_0^g$.
\item $X_1^g=\oX_{1}^{g}$, $e(X_1^g)=e_1^g$.
\item $X_{2}^{g}= \lbrace (A_{1},B_{1},\ldots, A_{g},B_{g}) \mid \prod_{i=1}^{g}[A_{i},B_{i}]\sim J_+\rbrace \cong
(\PGL(2,\CC)/U) \x \oX_2^g$. So $e(X_2^g)=(q^2-1)e_2^g$.
\item $X_{3}^{g}= \lbrace (A_{1},B_{1},\ldots, A_{g},B_{g}) \mid \prod_{i=1}^{g}[A_{i},B_{i}]\sim  J_- \}
\cong  (\PGL(2,\CC)/U) \x \oX_3^g$. So $e(X_3^g)=(q^2-1)e_3^g$.
\item  $X_{4}^{g}= 
\lbrace (A_{1},B_{1},\ldots, A_{g},B_{g}) \mid 
\prod_{i=1}^{g}[A_{i},B_{i}] \sim \xi_\lambda,$ for some $\lambda\in \CC-\{0,\pm 1\}\rbrace$. Here
$X_4^g \cong ( \PGL(2,\CC)/D \x \oX_4^g  )/\ZZ_2$. Using (\ref{eqn:RX4-RX4Z2->eX4}),
we have  $e(X_4^g)=(q^3-2q^2-q)a_g-(q^2+q)(b_g+c_g)-2q d_g$.
\end{itemize}
Therefore it must be
\begin{equation}\label{EpolyStratificationSL2Cg}
(q^3-q)^{2g} =  e_{0}^{g}+e_{1}^{g}+(q^2-1)(e_{2}^{g}+e_{3}^{g})+(q^3-2q^2-q)a_{g}-(q^2+q)(b_{g}+c_{g})-2qd_{g}. 
\end{equation}
We can prove (\ref{EpolyStratificationSL2Cg}) numerically by induction on $g\geq 0$, using 
(\ref{eqn:matrix}). The equation (\ref{EpolyStratificationSL2Cg}) is certainly true for $g=0$. Suppose it holds
for $g-1$ and let $w=(w_{0},\ldots,w_{7})^t=Mv_{g-1}$. Then 
an easy computation gives
\begin{align*}
 w_{0}&+w_{1}+(q^2-1)(w_{2}+w_{3})+(q^3-2q^2-q)w_{4}-(q^2+q)(w_{5}+w_{6})-2qw_{7} \\ & = (q^3-q)^2(e_{0}^{g-1}+e_{1}^{g-1}+(q^2-1)(e_{2}^{g-1}+e_{3}^{g-1})+(q^3-2q^2-q)a_{g-1}-(q^2+q)(b_{g-1}+c_{g-1})-2qd_{g-1}) \\ & = (q^3-q)^{2} (q^3-q)^{2g-2} =(q^3-q)^{2g},
\end{align*}
so equation (\ref{EpolyStratificationSL2Cg}) holds for $v_{g}=w=Mv_{g-1}$.
\end{rem}

We start by proving Corollary \ref{cor:Hausel} using (\ref{matrixg-1-->g}).
\begin{thm}
For every $g\geq 1$, we have
$e(\mathcal{M}_{J_{-}})+(q+1)e(\mathcal{M}_{-\Id})=e(\mathcal{M}_{\xi_{\lambda}})$.
\end{thm}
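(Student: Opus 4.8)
The plan is to reduce the identity to a single linear relation among the entries of the vector $v_g=(e_0^g,e_1^g,e_2^g,e_3^g,a_g,b_g,c_g,d_g)^t$ and then to show that this relation propagates through the recursion $v_g=Mv_{g-1}$ of (\ref{eqn:matrix}).

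First I would express the three character varieties through $v_g$. The point is that for $C=-\Id$, $J_-$ or $\xi_\lambda$ (with $\lambda\neq0,\pm1$) the corresponding fibre contains no reducible representation: a reducible tuple may be conjugated to upper triangular form, whence $\prod_i[A_i,B_i]$ is upper triangular with diagonal $(1,1)$, i.e.\ unipotent, and so can never equal $-\Id$, $J_-$ or $\xi_\lambda$. Hence $\Stab(C)$ acts freely, and by the fibre-bundle case of Theorem \ref{thm:general-fibr} (free action of a connected group) one divides by $e(\Stab(C))$. Since $\Stab(-\Id)=\PGL(2,\CC)$, $\Stab(J_-)=U\cong\CC$ and $\Stab(\xi_\lambda)\cong\CC^*$, with $e(\PGL(2,\CC))=q^3-q$, $e(U)=q$ and $e(\CC^*)=q-1$, this yields
\[
e(\mathcal{M}_{-\Id})=\frac{e_1^g}{q^3-q},\qquad
e(\mathcal{M}_{J_-})=\frac{e_3^g}{q},\qquad
e(\mathcal{M}_{\xi_\lambda})=\frac{a_g+b_g+c_g+d_g}{q-1},
\]
where the last numerator is the E-polynomial of the fibre $\oX_{4,\lambda}^g$, obtained from $R(\oX_4^g/\ZZ_2)=a_gT+b_gS_2+c_gS_{-2}+d_gS_0$ by sending each one-dimensional representation to $1$.

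Next I would substitute these into $e(\mathcal{M}_{J_-})+(q+1)e(\mathcal{M}_{-\Id})=e(\mathcal{M}_{\xi_\lambda})$ and clear denominators. Writing $q^3-q=q(q-1)(q+1)$, the identity becomes equivalent to
\[
e_1^g+(q-1)\,e_3^g=q\,(a_g+b_g+c_g+d_g),
\]
that is, to $\ell\cdot v_g=0$ for the covector $\ell=(0,\,1,\,0,\,q-1,\,-q,\,-q,\,-q,\,-q)$.

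The final step is to prove $\ell\cdot v_g=0$ for all $g\ge1$ using $v_g=M^gv_0$. The crucial fact is the polynomial identity $\ell M=q^2(q-1)^2\,\ell$, i.e.\ that $\ell$ is a left eigenvector of the matrix (\ref{matrixg-1-->g}) with eigenvalue $q^2(q-1)^2$; this is verified column by column against the entries of $M$. Granting it, $\ell\cdot v_g=\ell M^gv_0=\big(q^2(q-1)^2\big)^g\,\ell\cdot v_0$, and since $v_0=(1,0,\dots,0)^t$ has $\ell\cdot v_0=0$, we obtain $\ell\cdot v_g=0$ for every $g$, which is exactly the asserted relation. I expect the eigenvector computation to be the main obstacle: it is a routine but lengthy polynomial verification in which the column assembled from the $e_2^{g-1}$-coefficients is the delicate one, the cancellation there to the correct multiple of $\ell$ (namely $0$) being the least transparent. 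The only conceptual input is the no-reducibles observation of the first step, which is precisely what makes the division by $e(\Stab(C))$ exact.
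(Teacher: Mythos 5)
Your proposal is correct and follows essentially the same route as the paper: the paper likewise writes $e(\mathcal{M}_{-\Id})=e_1^g/(q^3-q)$, $e(\mathcal{M}_{J_-})=e_3^g/q$, $e(\mathcal{M}_{\xi_\lambda})=(a_g+b_g+c_g+d_g)/(q-1)$, reduces the claim to the linear identity $(q^2-1)e_3^g+(q+1)e_1^g=(q^2+q)(a_g+b_g+c_g+d_g)$, and proves it by induction using the recursion $v_g=Mv_{g-1}$. Your left-eigenvector statement $\ell M=q^2(q-1)^2\,\ell$ is exactly the content of the paper's induction step (their covector is $-(q+1)\ell$), so the two arguments coincide up to phrasing.
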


\begin{proof}
First, $\cM_{-\Id}=\oX_1^g/\PGL(2,\CC)$, so $e(\cM_{-\Id})=e_1^g/(q^3-q)$. Second,
$\cM_{J_+}=\oX_2^g/U$, so $e(\cM_{J_+})=e_3^g/q$. And third,
$\cM_{\xi_\lambda}= \oX_{4,\lambda}^g /D$, so $e(\cM_{\xi_\lambda})=e(\oX_{4,\lambda}^g) /(q-1)=
(a_g+b_g+c_g+d_g)/(q-1)$.

The assertion is thus equivalent to 
\begin{equation} \label{HauselIdentity}
 (q^2-1)e_{3}^{g}+(q+1)e_{1}^{g}=(q^2+q)(a_{g}+b_{g}+c_{g}+d_{g}),
\end{equation}
for all $g\geq 1$. We proceed by induction starting with $g=0$, where it obviously holds.
If we assume that (\ref{HauselIdentity}) holds for $g-1$, then using (\ref{matrixg-1-->g}),
\begin{align*}
(q^2+q) & (a_{g}+ b_{g}+c_{g}+d_{g})-(q^2-1)e_{3}^{g}-(q+1)e_{1}^{g} \\ 
 & = -q^2(q+1)(q-1)^2e_{1}^{g-1} -q^2(q+1)(q-1)^3e_{3}^{g-1} 
+q^3(q+1)(q-1)^2(a_{g-1}+b_{g-1}+c_{g-1}+d_{g-1}) \\
& = q^2(q-1)^2((q^2+q)(a_{g-1}+b_{g-1}+c_{g-1}+d_{g-1})  -(q^2-1)e_{3}^{g-1}-(q+1)e_{1}^{g-1}) = 0,
\end{align*}
by induction hypothesis.
\end{proof}

Since $v_{g}=M^{g}v_{0}$, we can  obtain closed formulas for $e_{0}^{g},e_{1}^{g},e_{2}^{g},e_{3}^{g},a_{g},b_{g},c_{g},d_{g}$. We summarize them in the following
\begin{prop} \label{Polynomialseig}
For all $g\geq 1$,
\begin{align*}
e_{0}^{g} & = (q^3-q)\left( (q^3-q)^{2g-2}+(q^2-1)^{2g-2}-(q^2-q)^{2g-2}+\frac{1}{2}q^{2g-2}(q+2^{2g}-1)\left( (q+1)^{2g-2}+(q-1)^{2g-2} \right) \right) \\
e_{1}^{g} & = (q^{3}-q)\left( (q^3-q)^{2g-2}+(q^2-1)^{2g-2}-2^{2g-1}(q^{2}+q)^{2g-2}+(2^{2g-1}-1)(q^{2}-q)^{2g-2}) \right)\\
e_{2}^{g} & = (q^{3}-q)^{2g-1}+(2^{2g-1}-1)(q^{2}-q)^{2g-1}-2^{2g-1}(q^{2}+q)^{2g-1}+\frac{1}{2}q^{2g-1}(q-1) \left( (q-1)^{2g-1}-(q+1)^{2g-1} \right) \\
e_{3}^{g} & = (q^{3}-q)^{2g-1}+(2^{2g-1}-1)(q^{2}-q)^{2g-1}+2^{2g-1}(q^{2}+q)^{2g-1} \\
a_{g} & = (q^{3}-q)^{2g-1}+\frac{1}{2}q^{2g-1} \left( (q+1)^{2g-1}-(q-1)^{2g-1} \right) \\
b_{g} & = 2^{2g-1}(q^{2}-q)^{2g-1}-2^{2g-1}(q^{2}+q)^{2g-1} +\frac{1}{2}q^{2g-1} \left( (q+1)^{2g-1}-(q-1)^{2g-1} \right) \\
c_{g} & = 2^{2g-1}(q^{2}-q)^{2g-1}+2^{2g-1}(q^{2}+q)^{2g-1}-\frac{1}{2}q^{2g-1} \left( (q+1)^{2g-1}+(q-1)^{2g-1} \right) \\
d_{g} & = (q^{2}-1)^{2g-1}-\frac{1}{2}q^{2g-1} \left( (q+1)^{2g-1} +(q-1)^{2g-1} \right),
\end{align*}
and also
\begin{align*}
e_{4,\xi_{\lambda}}^{g}& = a_{g}+b_{g}+c_{g}+d_{g} =(q^{3}-q)^{2g-1}+(q^{2}-1)^{2g-1}+(2^{2g}-2)(q^{2}-q)^{2g-1}.
\end{align*}
\end{prop}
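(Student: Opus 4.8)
The plan is to prove the closed formulas by checking that they satisfy the first--order recursion $v_g=Mv_{g-1}$ of (\ref{eqn:matrix}) together with the recorded value of $v_1$; since that recursion determines $v_g$ uniquely from $v_1$, this suffices. The observation that makes the verification finite is that every entry of the claimed $v_g$ is a $\QQ(q)$--linear combination of the six sequences
$$f_1=(q^3-q)^{2g-1},\quad f_2=(q^2-1)^{2g-1},\quad f_3=(q^2-q)^{2g-1},$$
$$f_4=(q^2+q)^{2g-1},\quad f_5=2^{2g-1}(q^2-q)^{2g-1},\quad f_6=2^{2g-1}(q^2+q)^{2g-1},$$
each of which is geometric in $g$, namely $f_i(g)=\lambda_i\,f_i(g-1)$ with
$$\lambda_1=(q^3-q)^2,\ \lambda_2=(q^2-1)^2,\ \lambda_3=(q^2-q)^2,\ \lambda_4=(q^2+q)^2,\ \lambda_5=4(q^2-q)^2,\ \lambda_6=4(q^2+q)^2.$$
For instance, multiplying the bracket in $e_0^g$ through by $(q^3-q)$ and using $(q^3-q)(q^2-q)^{2g-2}=(q+1)(q^2-q)^{2g-1}$ and the analogous reductions rewrites $e_0^g$ as such a combination; the only mildly delicate coefficients are the $q$--dependent ones, such as the $\frac{1}{2}(q-1)$ hidden in $e_2^g$.

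First I would assemble these combinations into a factorization $v_g=P\,F(g)$, where $F(g)=(f_1,\dots,f_6)^t$ and $P$ is an explicit $8\times 6$ matrix over $\QQ(q)$ whose columns $\xi_1,\dots,\xi_6$ are read off from the statement. Writing $\Lambda=\mathrm{diag}(\lambda_1,\dots,\lambda_6)$, we have $F(g)=\Lambda F(g-1)$, so the inductive step $Mv_{g-1}=v_g$ follows from the single $g$--free matrix identity $MP=P\Lambda$, i.e. from the six eigenvector relations $M\xi_i=\lambda_i\xi_i$. Indeed, granting $MP=P\Lambda$ and the inductive hypothesis $v_{g-1}=PF(g-1)$, one gets $Mv_{g-1}=MP\,F(g-1)=P\Lambda F(g-1)=P\,F(g)=v_g$.

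The base case is the direct check that $P\,F(1)$ reproduces the vector $v_1=(q^4+4q^3-q^2-4q,\,q^3-q,\dots,-1)^t$ of Section \ref{sec:strata}. Combined with $MP=P\Lambda$ this yields $v_g=PF(g)=M^{g-1}PF(1)=M^{g-1}v_1=M^{g}v_0$ for every $g\ge 1$, which is exactly the proposition. Finally, the displayed formula for $e_{4,\xi_\lambda}^g=a_g+b_g+c_g+d_g$ is obtained by adding the four expressions: in the $f$--coordinates the $f_4$-- and $f_6$--contributions cancel and one is left with $f_1+f_2+2f_5-2f_3=(q^3-q)^{2g-1}+(q^2-1)^{2g-1}+(2^{2g}-2)(q^2-q)^{2g-1}$.

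The main obstacle is entirely computational: checking $M\xi_i=\lambda_i\xi_i$ for $i=1,\dots,6$ is a system of $48$ polynomial identities in $q$ involving the degree--six entries of the matrix (\ref{matrixg-1-->g}), and one must first extract the columns $\xi_i$ correctly from the statement, taking care of the half--integer and $q$--dependent coefficients. This is best discharged with a computer algebra system. A conceptually simpler but far more tedious alternative, which I would use only to spot--check an individual component, is to bypass the $f_i$ entirely and substitute the eight closed forms for $v_{g-1}$ directly into the corresponding row of $M$, then collect like terms.
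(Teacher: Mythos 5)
Your proposal is correct and is essentially the paper's own argument: the paper proves the proposition by diagonalizing $M=QDQ^{-1}$, where the diagonal of $D$ consists precisely of the ratios you list, namely $(q^2-q)^2$, $(q^2+q)^2$, $4(q^2-q)^2$, $4(q^2+q)^2$, $(q^2-1)^2$, $(q^3-q)^2$ (the first two repeated), and then computes $v_g=M^g v_0=QD^gQ^{-1}v_0$. Your variant --- checking the eigenvector relations $M\xi_i=\lambda_i\xi_i$ for the six columns read off from the claimed formulas together with the base case $PF(1)=v_1$ --- is the same spectral computation run as a verification rather than a derivation, which only spares you computing $Q^{-1}$ and the two eigenvectors not appearing in the orbit of $v_0$.
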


\begin{proof}
We know that $v_{g}=M^{g}v_{0}$, where $M$ is given in \ref{matrixg-1-->g}. 
There exists a matrix $Q$ with entries in the fraction field of $\mathbb{Z}[q]$ such that $M=QDQ^{-1}$, 
where $D$ is the diagonal matrix
$$
D=\begin{pmatrix}
(q^2-q)^2 & 0 & 0 & 0 & 0 & 0 & 0 & 0 \\
0 & (q^2+q)^2 & 0 & 0 & 0 & 0 & 0 & 0 \\
0 & 0 & 4(q^2-q)^2 & 0 & 0 & 0 & 0 & 0 \\
0 & 0 & 0 & 4(q^2+q)^2 & 0 & 0 & 0 & 0 \\
0 & 0 & 0 & 0 & (q^2-1)^2  & 0 & 0 & 0 \\
0 & 0 & 0 & 0 & 0 & (q^3-q)^2 & 0 & 0 \\
0 & 0 & 0 & 0 & 0 & 0 & (q^2-q)^2 & 0 \\
0 & 0 & 0 & 0 & 0 & 0 & 0 & (q^2+q)^2
\end{pmatrix}
$$
As $M^{g}=QD^{g}Q^{-1}$, a straightforward computation gives the desired formulas.	
\end{proof}

Proposition \ref{Polynomialseig} gives us the E-polynomials of all the moduli spaces where the quotient is geometric dividing by the E-polynomials of the respective stabilizers. We obtain
\begin{thm}
For all $g\geq 1$, 
\begin{align*}
e(\mathcal{M}_{-\Id}) = & \, e_{1}^{g}/(q^3-q) = (q^3-q)^{2g-2}+(q^2-1)^{2g-2}-2^{2g-1}(q^{2}+q)^{2g-2}+(2^{2g-1}-1)(q^{2}-q)^{2g-2} \\
e(\mathcal{M}_{J_{+}}) = & \, e_{2}^{g}/q = (q^{3}-q)^{2g-2}(q^{2}-1)+(2^{2g-1}-1)(q-1)(q^{2}-q)^{2g-2}\\
& \qquad \quad -2^{2g-1}(q+1)(q^{2}+q)^{2g-2}+\frac{1}{2}q^{2g-2}(q-1)\left((q-1)^{2g-1}-(q+1)^{2g-1} \right) \\
e(\mathcal{M}_{J_{-}}) =& \, e_{3}^{g}/q = (q^{3}-q)^{2g-2}(q^{2}-1)+(2^{2g-1}-1)(q-1)(q^{2}-q)^{2g-2}+2^{2g-1}(q+1)(q^{2}+q)^{2g-2} \\
e(\mathcal{M}_{\xi_{\lambda}}) = & \, e_{4,\xi_{\lambda}}^{g}/q-1 = (q^{3}-q)^{2g-2}(q^{2}+q)+(q^{2}-1)^{2g-2}(q+1)+(2^{2g}-2)(q^{2}-q)^{2g-2}q.
\end{align*}
\end{thm}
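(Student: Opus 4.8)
The plan is to combine two facts that are already in place by this point in the paper: the description of each $\cM_C$ as a geometric quotient of the stratum $\oX_i^g$ by the stabilizer $\Stab(C)$, and the closed formulas for $e_0^g,\dots,e_3^g$ and $a_g,b_g,c_g,d_g$ from Proposition~\ref{Polynomialseig}. The first fact, recorded in the proof of Corollary~\ref{cor:Hausel}, identifies $\cM_{-\Id}=\oX_1^g/\PGL(2,\CC)$, $\cM_{J_+}=\oX_2^g/U$, $\cM_{J_-}=\oX_3^g/U$ and $\cM_{\xi_\lambda}=\oX_{4,\lambda}^g/D$. In each case the acting group is connected and acts freely on the relevant locus, so $\oX_i^g\to\cM_C$ is an analytically locally trivial $\Stab(C)$-bundle, and the product formula of Theorem~\ref{thm:general-fibr} (the case of trivial isotropy) gives $e(\oX_i^g)=e(\cM_C)\,e(\Stab(C))$. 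Using $e(\PGL(2,\CC))=q^3-q$, $e(U)=q$ and $e(D)=q-1$, this yields
\[
e(\cM_{-\Id})=\frac{e_1^g}{q^3-q},\qquad
e(\cM_{J_+})=\frac{e_2^g}{q},\qquad
e(\cM_{J_-})=\frac{e_3^g}{q},\qquad
e(\cM_{\xi_\lambda})=\frac{a_g+b_g+c_g+d_g}{q-1}.
\]

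With these divisions in hand, the theorem reduces to a purely algebraic verification: I would substitute the formulas of Proposition~\ref{Polynomialseig} and simplify, checking divisibility summand by summand. Since $e_0^g$ and $e_1^g$ carry an overall factor $(q^3-q)$, the $\PGL(2,\CC)$-division is immediate. For the Jordan classes each term of $e_2^g$ and $e_3^g$ is divisible by $q$, via identities such as $(q^3-q)^{2g-1}/q=(q^2-1)(q^3-q)^{2g-2}$, $(q^2\mp q)^{2g-1}/q=(q\mp 1)(q^2\mp q)^{2g-2}$ and $q^{2g-1}/q=q^{2g-2}$. For the semisimple class I would first assemble
\[
e_{4,\xi_\lambda}^g=a_g+b_g+c_g+d_g=(q^3-q)^{2g-1}+(q^2-1)^{2g-1}+(2^{2g}-2)(q^2-q)^{2g-1},
\]
in which each of $q^3-q$, $q^2-1$, $q^2-q$ contains a factor $q-1$, so that division by $q-1$ is legitimate and produces $(q^3-q)^{2g-2}(q^2+q)+(q^2-1)^{2g-2}(q+1)+(2^{2g}-2)q(q^2-q)^{2g-2}$. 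In each case the result is exactly the displayed polynomial.

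Because Proposition~\ref{Polynomialseig} supplies all eight polynomials explicitly, no further geometric input is needed here and the manipulation is routine bookkeeping. The step I would flag as the genuine obstacle lies upstream, in the first ingredient rather than in the algebra: one must know that each quotient is \emph{geometric} with a free, connected-group action, i.e.\ that the GIT quotient $//\Stab(C)$ agrees with the honest orbit space on the locus that carries the cohomology. This rests on the irreducibility of the representations in the relevant strata, and is precisely what is borrowed from the proof of Corollary~\ref{cor:Hausel}. The only subtlety internal to this theorem is that for $\cM_{\xi_\lambda}$ one must collect $a_g,b_g,c_g,d_g$ into $e_{4,\xi_\lambda}^g$ \emph{before} dividing by $q-1$, since the individual polynomials are not separately divisible by $q-1$ (only their sum is).
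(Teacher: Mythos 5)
Your proposal is correct and is essentially the paper's own argument: the paper obtains these formulas in exactly the same way, by identifying each $\cM_C$ as a geometric quotient of the corresponding stratum by $\Stab(C)$ (as recorded in the proof of Corollary \ref{cor:Hausel}) and dividing the closed formulas of Proposition \ref{Polynomialseig} by $e(\Stab(C))$, namely $q^3-q$, $q$, $q$ and $q-1$. Your two added observations — that freeness of the connected-group action is what makes the quotient geometric and the product formula applicable, and that only the sum $a_g+b_g+c_g+d_g$ (not the individual polynomials) is divisible by $q-1$ — are accurate refinements of that same route.
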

Note that $e(\mathcal{M}_{-\Id}^{g})$ agrees with the result obtained by arithmetic methods in \cite{mereb:2010}.

\begin{cor} \label{cor:hodgemonoparabolic}
For $g\geq 1$, the behaviour of the E-polynomial of the parabolic character variety $\mathcal{M}_{\xi_{\lambda}}^{g}$ is given by
$$
R(\mathcal{M}_{\xi_{\lambda}})= 
\left( (q^{3}-q)^{2g-2}(q^{2}+q)+(q+1)(q^{2}-1)^{2g-2}-q(q^{2}-q)^{2g-2} \right) T 
+ \left( (2^{2g}-1)q(q^{2}-q)^{2g-2} \right) N.
$$
\end{cor}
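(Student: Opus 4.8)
The plan is to read off $R(\cM_{\xi_\lambda})$ from the Hodge monodromy representation $R(\oX_4^g)$ of Section~\ref{subsec:4} by dividing out the stabilizer torus, and then to insert the closed formulas of Proposition~\ref{Polynomialseig}.

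First I would recall that $\cM_{\xi_\lambda}^g=\oX_{4,\lambda}^g/D$, where $D\cong\CC^*$ is the group of diagonal matrices fixing $\xi_\lambda$, acting by conjugation, so that $e(D)=q-1$. As $\lambda$ ranges over $\CC-\{0,\pm1\}$, together with the involution $\lambda\mapsto\lambda^{-1}$ induced by conjugation with $P_0$, both $\oX_4^g$ and the total space of the family $\{\cM_{\xi_\lambda}^g\}$ fiber over $\CC-\{0,\pm1\}$, and the fiberwise quotient $\oX_4^g\too\cM_{\xi_\lambda}$ is a fiber bundle with connected fiber $D$. Since $D$ is connected it acts trivially on $H_c^*(D)$, and since $P_0$ normalizes $D$ the $\ZZ_2$-symmetry descends compatibly. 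Applying the first situation of Theorem~\ref{thm:general-fibr} relatively over the base, the K\"unneth factor $H_c^*(D)$ contributes only a factor $q-1$ to the trivial part, whence
$$
R(\oX_4^g)=(q-1)\,R(\cM_{\xi_\lambda}).
$$

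Recall from Section~\ref{subsec:5} that $R(\oX_4^g)=(a_g+d_g)\,T+(b_g+c_g)\,N$, with $T$ the trivial representation and $N$ the order-two representation around the origin. It then remains to verify divisibility by $q-1$ and to identify the quotients. Substituting the closed formulas of Proposition~\ref{Polynomialseig}, the $(q+1)^{2g-1}$- and $(q^2+q)^{2g-1}$-contributions cancel in the two sums while the remaining half-integer terms combine, leaving
$$
a_g+d_g=(q^3-q)^{2g-1}+(q^2-1)^{2g-1}-(q^2-q)^{2g-1},\qquad b_g+c_g=(2^{2g}-1)(q^2-q)^{2g-1}.
$$
Each of $(q^3-q)^{2g-1}$, $(q^2-1)^{2g-1}$, $(q^2-q)^{2g-1}$ contains a factor $q-1$, so dividing by $q-1$ reproduces exactly the coefficients of $T$ and $N$ in the statement.

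I expect the only genuinely delicate point to be the first step: justifying that forming the $D$-quotient multiplies the Hodge monodromy representation by $e(D)=q-1$ separately on the $T$ and $N$ isotypic pieces, rather than performing a naive polynomial division. This rests on $D$ being connected (so it contributes a clean $q-1$ factor with trivial fiber monodromy) and on $P_0$ normalizing $D$ (so the invariant/non-invariant splitting is preserved under the quotient). The subsequent algebra with the formulas of Proposition~\ref{Polynomialseig} is then routine.
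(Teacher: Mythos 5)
Your proposal is correct and follows essentially the same route as the paper: the paper's proof likewise writes $R(\oX_4^g)=(a_g+d_g)T+(b_g+c_g)N$, substitutes the closed formulas of Proposition \ref{Polynomialseig} to get $\left((q^3-q)^{2g-1}+(q^2-1)^{2g-1}-(q^2-q)^{2g-1}\right)T+\left((2^{2g}-1)(q^2-q)^{2g-1}\right)N$, and divides by $e(\Stab(\xi_\lambda))=q-1$. Your added justification that the connected stabilizer torus $D$ contributes a clean factor $q-1$ compatibly with the $T$/$N$ splitting is exactly what the paper's terse "dividing by $e(\Stab(\xi_\lambda))$" implicitly relies on, and your algebra checks out.
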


\begin{proof}
From Proposition \ref{Polynomialseig} we get that
\begin{align*}
R(\overline{X}{}_{4}^{g})& =(a_{g}+d_{g})T+(b_{g}+c_{g})N \\
& = ((q^{3}-q)^{2g-1}+(q^{2}-1)^{2g-1}-(q^{2}-q)^{2g-1})T + ((2^{2g}-1)(q^{2}-q)^{2g-1})N.
\end{align*}
The result is obtained dividing by $e(\Stab(\xi_{\lambda}))=q-1$.
\end{proof}

To complete the proof of Theorem \ref{thm:main}, it remains the following
\begin{thm} \label{thm:id}
For all $g\geq 1$, we have
\begin{align*}
e(\mathcal{M}_{\Id})=  & \, (q^{3}-q)^{2g-2}+(q^{2}-1)^{2g-2}-q(q^{2}-q)^{2g-2}-2^{2g}q^{2g-2} \\
& +\frac{1}{2}q^{2g-2}(q+2^{2g}-1)((q+1)^{2g-2}+(q-1)^{2g-2}) + \frac{1}{2}q((q+1)^{2g-1}+(q-1)^{2g-1}).
\end{align*}
\end{thm}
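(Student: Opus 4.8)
The plan is to present $\cM_{\Id}$ as the GIT quotient $\oX_0^{g}/\!/\PGL(2,\CC)$ (the conjugation action factors through $\PGL(2,\CC)$) and, since this action is not free, to split the quotient according to reducibility. On the locus $\oX_0^{g,\mathrm{irr}}$ of irreducible representations the action is free, so by the fibration result (Theorem \ref{thm:general-fibr}, in the form $e(Z)=e(B)e(G)/e(H)$ with $G=\PGL(2,\CC)$, $H=\{1\}$) this part contributes $e(\oX_0^{g,\mathrm{irr}})/(q^3-q)$ to $e(\cM_{\Id})$. The closed orbits among reducible representations are the diagonalizable ones, and every reducible representation is $S$-equivalent to its diagonal part, so the reducible part of the character variety is the set of diagonal representations $(\alpha_i,\beta_i)\in(\CC^*)^{2g}$ modulo the Weyl $\ZZ_2$ (simultaneous inversion). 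Using $e(\CC^*)^+=q$, $e(\CC^*)^-=-1$ and iterating (\ref{eqn:+-}), this contributes $e((\CC^*)^{2g}/\ZZ_2)=\tfrac12\big((q-1)^{2g}+(q+1)^{2g}\big)$. Hence
$$ e(\cM_{\Id})=\frac{e_0^{g}-e(\oX_0^{g,\mathrm{red}})}{q^3-q}+\tfrac12\big((q-1)^{2g}+(q+1)^{2g}\big), $$
and since $e_0^{g}$ is known in closed form from Proposition \ref{Polynomialseig}, everything reduces to computing $e(\oX_0^{g,\mathrm{red}})$.

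First I would parametrize reducible representations through the Borel subgroup $B$. Writing each $A_i,B_i$ as an upper triangular matrix with diagonal entries $(\alpha_i,\beta_i)$ and off-diagonal entries $(u_i,v_i)$, the diagonal part of $\prod[A_i,B_i]$ is automatically trivial, so the constraint $\prod[A_i,B_i]=\Id$ becomes a single linear equation $L=0$ in the $2g$ variables $(u_i,v_i)$, whose coefficients involve the factors $\alpha_i-\alpha_i^{-1}$ and $\beta_i-\beta_i^{-1}$. This form vanishes identically precisely on the $2^{2g}$ central diagonal parts $(\alpha_i,\beta_i)\in\{\pm1\}^{2g}$ and is nonzero otherwise. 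Projecting the space $\Hom_B$ of $B$-representations to its diagonal part gives affine fibers $\CC^{2g}$ over the $2^{2g}$ central points and $\CC^{2g-1}$ elsewhere, so by additivity $e(\Hom_B)=2^{2g}q^{2g}+\big((q-1)^{2g}-2^{2g}\big)q^{2g-1}$.

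Next I would reconstruct $\oX_0^{g,\mathrm{red}}$ from $\Hom_B$ by counting fixed lines. The $\PGL(2,\CC)$-equivariant map sending a reducible representation to a chosen fixed line identifies the space of marked pairs with $\PGL(2,\CC)\times_B\Hom_B$, a Zariski-locally trivial bundle over $\PGL(2,\CC)/B=\PP^1$ (as $B$ is special), and the forgetful map to $\oX_0^{g,\mathrm{red}}$ is a bijection over the indecomposable locus $S_1$ (one fixed line), a double cover over the diagonalizable non-central locus $S_2$ (two fixed lines), and has $\PP^1$-fibers over the $2^{2g}$ central representations $S_\infty$. I would then compute $e(S_1)=(q+1)\,e(\Hom_B^{\mathrm{indec}})$, where $\Hom_B^{\mathrm{indec}}$ removes from $\Hom_B$ the diagonalizable representations (a line $\CC$ inside each non-central fiber, the origin inside each central fiber) and the central ones; compute $e(S_2)$ as $e\big((\PGL(2,\CC)/D\times\mathcal{D}')/\ZZ_2\big)$ with $\mathcal{D}'=(\CC^*)^{2g}\setminus\{\pm1\}^{2g}$, using $e(\PGL(2,\CC)/D)^+=q^2$, $e(\PGL(2,\CC)/D)^-=q$ and the formula (\ref{eqn:+-}); and add $e(S_\infty)=2^{2g}$.

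Finally I would substitute the closed form of $e_0^{g}$ from Proposition \ref{Polynomialseig} together with the resulting $e(\oX_0^{g,\mathrm{red}})$ into the displayed identity and simplify; the collapse of the various $(q\pm1)^{\bullet}$ and $2^{2g}$ contributions into the stated polynomial is then a routine verification, valid uniformly for $g\geq 1$. I expect the hard part to be the bookkeeping of the reducible stratum, namely correctly separating the indecomposable, diagonalizable non-central, and central representations (each with a different stabilizer and a different number of fixed lines) and handling the Weyl $\ZZ_2$-action on the two-fixed-line locus via (\ref{eqn:+-}), rather than the concluding algebraic simplification.
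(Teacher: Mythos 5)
Your proposal is correct and takes essentially the same route as the paper's proof: split $\cM_{\Id}$ into irreducible and reducible parts, compute the reducible representation locus inside $\oX_0^g$ via the upper-triangular parametrization whose constraint is a single linear equation in the off-diagonal entries (vanishing identically exactly at the $2^{2g}$ central diagonal parts), divide the irreducible contribution by $e(\PGL(2,\CC))=q^3-q$, and add $e((\CC^*)^{2g}/\ZZ_2)=\tfrac12\left((q-1)^{2g}+(q+1)^{2g}\right)$ for the semisimple (S-equivalence) part. The only difference is organizational: you merge the paper's strata $R_2$ and $R_4$ (indecomposables with non-central, resp.\ central, diagonal part) into a single stratum computed via the associated bundle $\PGL(2,\CC)\times_B\Hom_B$ over $\PP^1$, giving the factor $(q+1)$ at once, whereas the paper computes these two pieces separately by surjective maps with fiber the stabilizer; both yield the same reducible-locus polynomial and hence the same final formula.
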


\begin{proof}
We need to distinguish between reducible and irreducible orbit since we have to take a GIT quotient to compute $e(\mathcal{M}_{\Id})$ and identify those orbits whose closures intersect. 
We follow the method described in \cite{mamu} and compute the reducible locus. The E-polynomial of the irreducible locus is obtained 
by sustracting the contribution of the reducible part from the E-polynomial of the total space $e_{0}^{g}$. 

A reducible representation given by  $(A_{1},B_{1},A_{2},B_{2},\ldots,A_{g},B_{g}) \in \sldos^{2g}$ is S-equivalent to
\begin{equation} \label{eqn:redrepres}
\left( \begin{pmatrix} \lambda_{1} & 0 \\ 0 & \lambda_{1}^{-1} \end{pmatrix}, \begin{pmatrix} \lambda_{2} & 0 \\ 0 & \lambda_{2}^{-1} \end{pmatrix}, \ldots, \begin{pmatrix} \lambda_{2g} & 0 \\ 0 & \lambda_{2g}^{-1} \end{pmatrix} \right)
\end{equation} 
under the $\ZZ_2$-action $(\lambda_{1},\lambda_{2},\ldots, \lambda_{2g}) \sim (\lambda_{1}^{-1}, \lambda_{2}^{-1}, \ldots, \lambda_{2g}^{-1})$. 
We have that $e(\CC^*)^{+}=q, e(\CC^*)^{-}=-1$, so 
\begin{align*}
e(\mathcal{M}_{\Id}^{red}) & = e((\CC^*)^{2g}/\ZZ_2) \\
& = (e(\CC^*)^{+})^{2g}+ \binom{2g}{2}(e(\CC^*)^{+})^{2g-2}(e(\CC^*)^{-})^{2}+\ldots + \binom{2g}{2g-2}(e(\CC^*)^{+})^{2}(e(\CC^*)^{-})^{2g-2}+ (e(\CC^*)^{-})^{2g} \\
& = \frac{1}{2} \left( (q-1)^{2g} +(q+1)^{2g} \right).
\end{align*}

A reducible representation occurs if there is a common eigenvector. With respect to a suitable basis, the representation takes the form
$$
\left( \begin{pmatrix} \lambda_{1} & a_{1} \\ 0 & \lambda_{1}^{-1} \end{pmatrix}, \begin{pmatrix} \lambda_{2} & a_{2} \\ 0 & \lambda_{2}^{-1} \end{pmatrix}, 
\ldots, \begin{pmatrix} \lambda_{2g} & a_{2g} \\ 0 & \lambda_{2g}^{-1} \end{pmatrix} \right),
$$
which is a set parametrized by $(\CC^* \times \CC)^{2g}$. The condition $\prod_{i=1}^{g}[A_{i},B_{i}]=\Id$ is rewritten as
\begin{equation} \label{eqn:redlocusrelation}
\sum_{i=1}^{g} \lambda_{2i}(\lambda_{2i-1}^{2}-1)a_{2i} - \lambda_{2i-1}(\lambda_{2i}^{2}-1)a_{2i-1} = 0.
\end{equation}
There are four cases:
\begin{itemize}
\item $R_{1}$, given by $(a_{1},a_{2},\ldots,a_{2g}) \in \langle (\lambda_{1}-\lambda_{1}^{-1},\lambda_{2}-\lambda_{2}^{-1}, \ldots, \lambda_{2g}-\lambda_{2g}^{-1}) \rangle$, 
and $(\lambda_{1},\lambda_{2}, \ldots, \lambda_{2g}) \neq (\pm 1, \pm 1, \ldots, \pm 1)$. In this case we can conjugate the representation to the diagonal form 
(\ref{eqn:redrepres}) and assume that $a_{i}=0$. 
The stabilizer of this stratum is the set of diagonal matrices $D \subset \pgl$. Writing $A:=(\CC^*)^{2g}- \lbrace (\pm 1, \pm 1, \ldots, \pm 1) \rbrace$, 
the stratum is isomorphic to $(A\times \pgl/D) / \ZZ_2$, where the $\ZZ_2$-action is given by the permutation of the two basis vectors. 
Since $e(\pgl/D)^{+}=q^{2}, e(\pgl/D)^{-}=q$ and
\begin{align*}
e(A)^{+} & = \frac{1}{2}\left((q-1)^{2g}+(q+1)^{2g} \right)-2^{2g} \\
e(A)^{-} & = e(A)-e(A)^{+}= \frac{1}{2}\left( (q-1)^{2g}-(q+1)^{2g} \right),
\end{align*}
we obtain
\begin{align*}
e(R_{1}) & = e(\pgl/D)^{+}e(A)^{+}+e(\pgl/D)^{-}e(A)^{-} \\
& = q^2 \left( \frac{1}{2}((q-1)^{2g}+(q+1)^{2g})-2^{2g}\right) +q\left( \frac{1}{2} ((q-1)^{2g}-(q+1)^{2g}) \right) \\
& = (q^3-q)\frac{1}{2}\left( (q-1)^{2g-1}+(q+1)^{2g-1} \right)-2^{2g}q^2.
\end{align*}

\item $R_{2}$, given by $(a_{1},a_{2},\ldots,a_{2g}) \not \in \langle (\lambda_{1}-\lambda_{1}^{-1},\lambda_{2}-\lambda_{2}^{-1}, \ldots, \lambda_{2g}-\lambda_{2g}^{-1}) \rangle$, 
and $(\lambda_{1},\lambda_{2}, \ldots, \lambda_{2g}) \neq (\pm 1, \pm 1, \ldots, \pm 1)$. Equation (\ref{eqn:redlocusrelation}) defines a hyperplane $H\subset \CC^{2g}$ 
and the condition for $(a_{1},a_{2},\ldots, a_{2g})$ defines a line $l \subset H$. Writing $U'\cong D\times U$ for the upper triangular matrices, 
we have a surjective map $A \times (H - l) \times \pgl \longrightarrow R_{2}$ with fiber isomorphic to $U'$. Hence
\begin{align*}
e(R_{2}) & = ((q-1)^{2g}-2^{2g})(q^{2g-1}-q)(q^{3}-q)/(q^{2}-q) \\
& = (q+1)(q^{2g-1}-q)((q-1)^{2g}-2^{2g}).
\end{align*}

\item $R_{3}$, given by $(a_{1},a_{2},\ldots,a_{2g})=(0,0,\ldots, 0)$, and $(\lambda_{1},\lambda_{2}, \ldots, \lambda_{2g}) = (\pm 1, \pm 1, \ldots, \pm 1)$, corresponding to the case where $A_{i}=B_{i}=\pm \Id$. The stratum consists of $2^{2g}$ points, so
$$
e(R_{3})=2^{2g}.
$$

\item $R_{4}$, given by $(a_{1},a_{2},\ldots,a_{2g}) \neq (0,0, \ldots , 0)$, and $(\lambda_{1},\lambda_{2}, \ldots, \lambda_{2g}) = (\pm 1, \pm 1, \ldots, \pm 1)$. In this case, there is at least a matrix of Jordan type, so the diagonal matrices $D$ act projectivizing the set $(a_{1},a_{2},\ldots,a_{2g})\in \CC^{2g}-\lbrace (0,0,\ldots,0) \rbrace$. The stabilizer is isomorphic to $U$. Therefore
\begin{align*}
e(R_{4}) & = 2^{2g}e(\mathbb{P}^{2g-1})e(\pgl/U) \\
& = 2^{2g}(q^{2g-1}+q^{2g-2}+\ldots + 1)(q^{2}-1) \\
& = 2^{2g}(q^{2g}-1)(q+1).
\end{align*}
\end{itemize}

The total E-polynomial of the reducible locus $R$ is thus
\begin{align*}
e(R) & = e(R_{1})+e(R_{2})+e(R_{3})+e(R_{4}) \\
& =(q^3-q)\left( \frac{1}{2}((q+1)^{2g-1}-(q-1)^{2g-1})+2^{2g}q^{2g-2}+(q-1)(q^{2}-q)^{2g-2} \right).
\end{align*}
We obtain the E-polynomial of the irreducible part as
\begin{align*}
e(I)  = & \, e_{0}^{g}-e(R) \\
 = &\, (q^3-q) \left( (q^3-q)^{2g-2}+(q^{2}-1)^{2g-2}-(q^{2}-q)^{2g-2}+\frac{1}{2}q^{2g-2}(q+2^{2g}-1)((q+1)^{2g-2}+(q-1)^{2g-2}) \right. \\
& \left. -2^{2g}q^{2g-2}-(q-1)(q^{2}-q)^{2g-2}-\frac{1}{2}((q+1)^{2g-1}-(q-1)^{2g-1}) \right),
\end{align*}
and
\begin{align*}
e(\mathcal{M}_{\Id}^{irr}) =&\,  e(I)/(q^{3}-q) \\
= &\,  (q^{3}-q)^{2g-2}+(q^{2}-1)^{2g-2}-q(q^{2}-q)^{2g-2}-2^{2g}q^{2g-2} \\
& +\frac{1}{2}q^{2g-2}(q+2^{2g}-1)((q+1)^{2g-2}+(q-1)^{2g-2}) - \frac{1}{2}((q+1)^{2g-1}-(q-1)^{2g-1}).
\end{align*}
Finally,
\begin{align*}
e(\mathcal{M}_{\Id}) =&\, e(\mathcal{M}_{\Id}^{irr})+e(\mathcal{M}_{\Id}^{red}) \\
=&\,  (q^{3}-q)^{2g-2}+(q^{2}-1)^{2g-2}-q(q^{2}-q)^{2g-2}-2^{2g}q^{2g-2} \\
& +\frac{1}{2}q^{2g-2}(q+2^{2g}-1)((q+1)^{2g-2}+(q-1)^{2g-2}) + \frac{1}{2}q((q+1)^{2g-1}+(q-1)^{2g-1}).
\end{align*}
\end{proof}

\section{Topological consequences} \label{sec:topol}

In this section, we extract some information from the formulas in Theorem \ref{thm:main}.
We start by a proof of Theorem \ref{thm:balanced}.

\noindent{\textit{Proof of Theorem \ref{thm:balanced}.\/}
In the case of $\cM_{-\Id}, \cM_{J_\pm}$ and $\cM_{\xi_\lambda}$, 
the result follows readily from Proposition \ref{prop:balanced}. For instance, for $\cM_{-\Id}$ we
have that $\cM_{-\Id}=\oX_1/ \PGL(2,\CC)$, where $\oX_1$ and $G=\PGL(2, \CC)$ are of balanced type.
Hence the classifying space $BG$ is also of balanced type and there is a homotopy fibration
$\oX_1 \to \oX_1/G \to BG$. The Leray spectral sequence gives that $\oX_1/G$ must be of balanced
type (a similar argument appears in the proof of Proposition 7.2 of \cite{Mu-e}).

In the case of $\cM_{\Id}$, the description in Theorem \ref{thm:id} yields that 
$R$ is of balanced type. Hence $I$ is also of balanced type. The same argument as above 
proves that $\cM_{\Id}^{irr}=I/\PGL(2,\CC)$ is of balanced type. As $\cM_{\Id}^{red}$ is
clearly of balanced type, so is $\cM_{\Id}$.
\hfill $\Box$

\medskip

 \begin{cor}
 Let $X$ be a complex curve of genus $g\geq 2$. The Euler characteristic of $\cM_C=\cM_C(\SL(2,\CC))$ is given by
\begin{align*}
&\chi(\mathcal{M}_{\Id})=  2^{4g-3}-3\cdot 2^{2g-2} \\
&\chi(\mathcal{M}_{-\Id}) =  -2^{4g-3} \\
&\chi(\mathcal{M}_{J_{+}}) = -2^{4g-2} \\
&\chi(\mathcal{M}_{J_{-}}) =2^ {4g-2}\\
&\chi(\mathcal{M}_{\xi_{\lambda}}) =  0.
\end{align*}
\end{cor}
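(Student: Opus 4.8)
The plan is to obtain every Euler characteristic by specializing the corresponding E-polynomial from Theorem \ref{thm:main} at $q=1$. The key fact I would invoke is that for any complex algebraic variety $Z$ the Euler characteristic with compact supports, $\chi_c(Z)=\sum_{k,p,q}(-1)^k h^{k,p,q}_c(Z)$, coincides with the ordinary Euler characteristic $\chi(Z)$, and that it is recovered from the E-polynomial by setting $u=v=1$. Since Theorem \ref{thm:balanced} tells us that all the $\cM_C$ are of balanced type, their E-polynomials are honest polynomials in $q=uv$, so that $\chi(\cM_C)=e(\cM_C)\big|_{q=1}$. Thus the whole corollary reduces to evaluating the five displayed polynomials at $q=1$.

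First I would use the hypothesis $g\geq 2$, which gives $2g-2\geq 2$ and $2g-1\geq 3$. This forces every factor of the form $(q^3-q)^{2g-2}$, $(q^2-1)^{2g-2}$, $(q^2-q)^{2g-2}$, $(q-1)^{2g-2}$ or $(q-1)^{2g-1}$ to vanish at $q=1$, since each base has a simple zero there and is raised to a positive power. The only surviving contributions therefore come from the factors that do not vanish at $q=1$, namely $(q+1)$, $(q^2+q)$, $(q+1)^{2g-2}$ and $(q+1)^{2g-1}$, each of which becomes a power of $2$, together with the pure powers of $q$ and the explicit powers of $2$.

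Carrying out the substitution, for $\cM_{-\Id}$ only the third summand survives, giving $-2^{2g-1}\cdot 2^{2g-2}=-2^{4g-3}$; for $\cM_{J_-}$ only the last summand survives, yielding $2^{2g-1}\cdot 2\cdot 2^{2g-2}=2^{4g-2}$, and by the mirror structure of the formulas $\cM_{J_+}$ gives $-2^{4g-2}$; for $\cM_{\xi_\lambda}$ every summand carries a vanishing factor, so $\chi=0$. The only case needing care is $\cM_{\Id}$, where three terms survive: $-2^{2g}q^{2g-2}$ contributes $-2^{2g}$; the term $\frac{1}{2}q^{2g-2}(q+2^{2g}-1)\big((q+1)^{2g-2}+(q-1)^{2g-2}\big)$ contributes $\frac{1}{2}\cdot 2^{2g}\cdot 2^{2g-2}=2^{4g-3}$ (the $(q-1)^{2g-2}$ piece dropping out); and $\frac{1}{2}q\big((q+1)^{2g-1}+(q-1)^{2g-1}\big)$ contributes $2^{2g-2}$. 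Summing and simplifying $-2^{2g}+2^{2g-2}=-3\cdot 2^{2g-2}$ gives $\chi(\cM_{\Id})=2^{4g-3}-3\cdot 2^{2g-2}$.

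The hard part is not conceptual but purely clerical: the bookkeeping in the $\cM_{\Id}$ evaluation, where I must make sure that exactly the correct terms survive and that the factors of $\frac{1}{2}$ recombine properly. I would also flag explicitly why $g\geq 2$ is needed: for $g=1$ one has $2g-2=0$, so the factors I relied on to vanish instead become $1$, extra terms survive, and the Euler characteristics take different values — which is precisely why the statement is restricted to $g\geq 2$.
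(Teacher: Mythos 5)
Your proposal is correct and is exactly the paper's argument: the paper's proof consists of the single sentence that the Euler characteristic is obtained by setting $q=1$ in the E-polynomials of Theorem \ref{thm:main}, and your evaluation of each polynomial (including the three surviving terms in the $\cM_{\Id}$ case recombining to $2^{4g-3}-3\cdot 2^{2g-2}$) checks out. The extra justification you supply — that $\chi_c=\chi$ for complex algebraic varieties, that balancedness lets one work in $q=uv$, and that $g\geq 2$ is what kills the $(q-1)$-type factors — is sound and simply makes explicit what the paper leaves implicit.
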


\begin{proof}
The Euler characteristic is obtained by setting $q=1$ in $e(\cM_C)$ given in  Theorem \ref{thm:main}. 
\end{proof}

 \begin{cor}
 Let $X$ be a complex curve of genus $g\geq 2$. Then $\cM_{\Id}$ and $\cM_{-\Id}$ are of dimension $6g-6$ and
$\cM_{J_+}$, $\cM_{J_-}$ and $\cM_{\xi_\lambda}$ are of dimension $6g-4$. All of them have a unique component of
maximal dimension.
\end{cor}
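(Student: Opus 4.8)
The plan is to extract both the dimension and the number of top‑dimensional components directly from the E‑polynomials of Theorem~\ref{thm:main}, using the following standard principle. For any complex quasi‑projective variety $Z$ of dimension $d$, one has $H^k_c(Z)=0$ for $k>2d$, while $H^{2d}_c(Z)$ is pure of weight $2d$, of Hodge type $(d,d)$, with $\dim H^{2d}_c(Z)$ equal to the number of irreducible components of $Z$ of maximal dimension $d$ (this is Poincar\'e duality applied to a smooth dense open subset of the top‑dimensional part). Since every $H^k_c(Z)$ with $k<2d$ has weights $\le k<2d$, it cannot contribute anything of Hodge type $(d,d)$; hence the coefficient of $q^d$ in $e(Z)$ equals $(-1)^{2d}\dim H^{2d}_c(Z)$, a positive integer counting the maximal components. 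Consequently the degree in $q=uv$ of $e(Z)$ is exactly $\dim Z$, and its leading coefficient is the number of components of maximal dimension. All the $\cM_C$ are of balanced type by Theorem~\ref{thm:balanced}, so their E‑polynomials are genuine polynomials in $q$ and this reading is legitimate.

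With this principle fixed, I would inspect each formula term by term. For $e(\cM_{\Id})$ and $e(\cM_{-\Id})$ the dominant summand is $(q^{3}-q)^{2g-2}$, of degree $6g-6$ and leading coefficient $1$; every other summand (the powers of $(q^{2}-1)$ and $(q^{2}\pm q)$, the term $2^{2g}q^{2g-2}$, and the multiples of $(q\pm1)^{2g-1}$) has degree at most $4g-3$, which is $<6g-6$ once $g\ge2$. Thus $\dim\cM_{\Id}=\dim\cM_{-\Id}=6g-6$, each with leading coefficient $1$. For $e(\cM_{J_-})$ and $e(\cM_{\xi_{\lambda}})$ the dominant terms are $(q^{3}-q)^{2g-2}(q^{2}-1)$ and $(q^{3}-q)^{2g-2}(q^{2}+q)$ respectively, both of degree $6g-4$ and leading coefficient $1$, while the remaining summands are of degree $4g-3<6g-4$ for $g\ge1$; hence both have dimension $6g-4$ and a single maximal component.

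The one case needing care is $e(\cM_{J_+})$, where I must examine the apparently high‑degree last summand
\[
\frac{1}{2}\,q^{2g-2}(q-1)\bigl((q-1)^{2g-1}-(q+1)^{2g-1}\bigr).
\]
Here the leading monomials $q^{2g-1}$ of $(q-1)^{2g-1}$ and $(q+1)^{2g-1}$ cancel, so the bracket has degree $2g-2$ rather than $2g-1$, and the whole summand has degree $4g-3$. The dominant term is therefore again $(q^{3}-q)^{2g-2}(q^{2}-1)$, of degree $6g-4$ and leading coefficient $1$, giving $\dim\cM_{J_+}=6g-4$ with a unique maximal component. In all five cases the leading coefficient equals $1$, which by the principle above is precisely the assertion that the maximal‑dimensional component is unique.

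The main obstacle is purely computational bookkeeping rather than conceptual: I must confirm in each formula that exactly one summand attains the top degree, so that the leading coefficient is read off unambiguously as $1$, and that the internal cancellation in the last term of $e(\cM_{J_+})$ genuinely lowers its degree to $4g-3$, below $6g-4$. Once these degree comparisons are checked for $g\ge2$, the stated dimensions and the uniqueness of the maximal component follow at once.
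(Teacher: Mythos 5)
Your proposal is correct and follows essentially the same route as the paper: the paper likewise reads the dimension off as the degree of $e(\cM_C)$ in $q$ and the number of maximal components off as the leading coefficient, identifying the dominant term $(q^3-q)^{2g-2}$ (resp.\ $(q^3-q)^{2g-2}(q^2\pm 1)$ up to the lower-order factor) in each formula. The only difference is that you spell out the weight-theoretic justification of the principle (top compactly supported cohomology counting maximal components, lower weights not contributing to type $(d,d)$), which the paper takes as standard and leaves implicit.
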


\begin{proof}
From Theorem \ref{thm:main}, we get
\begin{align*}
&e(\mathcal{M}_{\Id})=  q^{6g-6}+ \ldots + 1 \\
&e(\mathcal{M}_{-\Id}) =  q^{6g-6}+ \ldots + 1\\
&e(\mathcal{M}_{J_{+}}) =   q^{6g-4}+ \ldots +(1-2^{2g-1})q^{2g-2} \\
&e(\mathcal{M}_{J_{-}}) = q^{6g-4}+ \ldots +(2^{2g}-1) q^{2g-1}\\
&e(\mathcal{M}_{\xi_{\lambda}}) =   q^{6g-4}+ \ldots + 1
\end{align*}
where we have written the monomials of maximum and minimum degrees in each case.
The degree of the polynomial gives the dimension of the character variety, and the coefficient (which is always $1$)
gives the number of irreducible components.
\end{proof}

 \begin{cor}
 Let $X$ be a complex curve of genus $g\geq 1$. Then $e(\mathcal{M}_{-\Id})$, $e(\mathcal{M}_{\xi_{\lambda}})$, 
and its invariant and non-invariant part given in Cororally \ref{cor:hodgemonoparabolic}, are palindromic polynomials.
\end{cor}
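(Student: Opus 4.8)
The plan is to verify palindromicity directly from the explicit formulas in Theorem \ref{thm:main} and Corollary \ref{cor:hodgemonoparabolic}, using the reflection operator $\sigma_D(P)(q) := q^D P(1/q)$. A polynomial $P$ is palindromic precisely when $\sigma_D(P) = P$, where $D$ is the relevant reflection degree; here $D$ will be the complex dimension of the ambient character variety, namely $D = 6g-6$ for $\cM_{-\Id}$ and $D = 6g-4$ for $\cM_{\xi_\lambda}$ together with its invariant and non-invariant parts. The key point is that $\sigma_D$ is multiplicative for degree-additive factorizations: if $P = P_1 P_2$ and $D = D_1 + D_2$, then $\sigma_D(P) = \sigma_{D_1}(P_1)\,\sigma_{D_2}(P_2)$, so it suffices to understand how the elementary building blocks $(q^3-q)$, $(q^2\pm q)$, $(q^2-1)$, $(q\pm1)$ and $q$ transform.

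First I would record the basic identities, all resting on the exponent $2g-2$ being even, so that $(1-q^2)^{2g-2} = (q^2-1)^{2g-2}$ and $(1-q)^{2g-2} = (q-1)^{2g-2}$. From these one computes, for $e(\cM_{-\Id})$ with $D = 6g-6$, that $\sigma_D$ sends $(q^3-q)^{2g-2} \mapsto (q^2-1)^{2g-2}$ and conversely $(q^2-1)^{2g-2} \mapsto (q^3-q)^{2g-2}$, while it fixes both $(q^2+q)^{2g-2}$ and $(q^2-q)^{2g-2}$. Hence the first two terms of $e(\cM_{-\Id})$ are interchanged and the last two are preserved, so the whole sum is invariant and $e(\cM_{-\Id})$ is palindromic.

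Next I would treat $e(\cM_{\xi_\lambda})$ with $D = 6g-4$ in the same way. Writing the summands as $S_1 = (q^3-q)^{2g-2}(q^2+q)$, $S_2 = (q+1)(q^2-1)^{2g-2}$ and $S_3 = q(q^2-q)^{2g-2}$, the computation above (now carrying the extra linear factors) gives $\sigma_D(S_1) = S_2$, $\sigma_D(S_2) = S_1$ and $\sigma_D(S_3) = S_3$. Consequently $e(\cM_{\xi_\lambda}) = S_1 + S_2 + (2^{2g}-2)S_3$ is fixed by $\sigma_D$, and so is the invariant part $S_1 + S_2 - S_3$ from Corollary \ref{cor:hodgemonoparabolic}, since only the coefficient of the self-dual term $S_3$ has changed. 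For the non-invariant part $(2^{2g}-1)S_3$ one checks directly that $\sigma_D(S_3) = S_3$.

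The content of the argument is entirely in these swap/fixed-point bookkeeping identities; there is no conceptual obstacle, only the need to choose the reflection degree $D$ correctly, namely the dimension of the variety rather than the naive degree of each printed expression. The one place demanding care, and the main subtlety of the proof, is the non-invariant part, where the naive degree $4g-3$ differs from $D = 6g-4$: here palindromicity must be read as symmetry of the coefficient sequence about the midpoint $D/2 = 3g-2$, which holds because the factor $q^{2g-1}$ shifts the palindromic polynomial $(q-1)^{2g-2}$ so that its support is centered exactly at $3g-2$, the center of symmetry of $e(\cM_{\xi_\lambda})$ itself. This same viewpoint incidentally clarifies why $e(\cM_{\Id})$, $e(\cM_{J_+})$ and $e(\cM_{J_-})$ do not appear in the statement: the reflections of their summands fall outside the set of terms present, so their term structure is not preserved by the relevant $\sigma_D$.
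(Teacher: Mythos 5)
Your proposal is correct and takes essentially the same approach as the paper: the paper's proof also defines palindromicity by reflection about the dimension degree ($d=6g-4$, and analogously $6g-6$ for $\cM_{-\Id}$) and verifies $q^{d}A(q^{-1})=A(q)$, $q^{d}B(q^{-1})=B(q)$ directly for the two parts $A$, $B$ of $R(\cM_{\xi_\lambda})$ from Corollary \ref{cor:hodgemonoparabolic}, a check it calls straightforward. Your swap/fixed-term bookkeeping via multiplicativity of $\sigma_D$ is simply an explicit organization of that same verification.
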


\begin{proof}
Let $d=6g-4$. If we write $R(\mathcal{M}_{\xi_{\lambda}})=AT+BN$, with
\begin{align*}
A & = (q^3-q)^{2g-2}(q^2+q)+(q+1)(q^2-1)^{2g-2}-q(q^{2}-q)^{2g-2}, \\
B & = (2^{2g}-1)q(q^{2}-q)^{2g-2},
\end{align*}
given in Corollary \ref{cor:hodgemonoparabolic}, then one only has to check that $q^dA(q^{-1})= A(q)$ and $q^d B(q^{-1})=B(q)$, which is straightforward. 

The computation for $e(\mathcal{M}_{-\Id})$ is analogous and it is also given in \cite[Section 4.4]{mereb:2010}.
\end{proof}


\begin{thebibliography}{99}


\bibitem{by}
H. Boden and K. Yokogawa,
\emph{Moduli spaces of parabolic Higgs bundles and parabolic K(D) pairs over smooth curves: I},
Internat. J. Math. \textbf{7} (1996) 573--598.

\bibitem{Deligne2}
P. Deligne, \emph{Th\'{e}orie de Hodge II}, Publ.\ Math.\ I.H.E.S. \textbf{40} (1971), 5--55.

\bibitem{Deligne3}
P. Deligne, \emph{Th\'{e}orie de Hodge III}, Publ.\ Math.\ I.H.E.S. \textbf{44} (1974), 5--77.

\bibitem{garciaprada-gothen-munoz:2004}
O. Garc\'{\i}a-Prada, P.B. Gothen and V. Mu\~{n}oz, \emph{Betti numbers of the moduli space of rank 3 parabolic Higgs bundles}, Mem. Amer. Math. Soc. \textbf{187} (2007), viii+80 pp.

\bibitem{garciaprada-heinloth-schmitt:2011}
O. Garc\'{\i}a-Prada, J. Heinloth, A. Schmitt, \emph{On the motives of moduli of chains and Higgs bundles}, J. Eur. Math. Soc.
to appear.

\bibitem{garciaprada-heinloth:2013}
O. Garc\'{\i}a-Prada, J. Heinloth, \emph{The y-genus of the moduli space of $PGL_{n}$-Higgs bundles on a curve (for degree coprime to $n$)}, Duke Math. J., \textbf{14} (2013) 2731--2749.

\bibitem{gothen:1994}
P.B. Gothen, \emph{The Betti numbers of the moduli space of rank 3 Higgs bundles}, Internat. J. Math. \textbf{5} (1994) 861--875.

\bibitem{hausel-letellier-rvillegas:2011}
T. Hausel, E. Letellier and F. Rodriguez-Villegas,
\emph{Arithmetic harmonic analysis on character and quiver varieties},  Duke Math. J.
\textbf{160} (2011) 323--400.

\bibitem{hausel-rvillegas:2007}
T. Hausel and F. Rodriguez Villegas,
\emph{Mixed Hodge polynomials of character varieties}, Invent. Math. \textbf{174} (2008) 555--624.

\bibitem{hausel-thaddeus:2003}
T. Hausel and M. Thaddeus,
\emph{Mirror symmetry, Langlands duality and Hitchin systems}, Invent. Math. \textbf{153} (2003) 197--229.

\bibitem{hitchin:1987}
N.J. Hitchin, \emph{The self-duality equations on a Riemann surface}, Proc. London Math. Soc. (3) \textbf{55} (1987) 59--126.


\bibitem{lomune} M. Logares, V. Mu\~noz and P.E. Newstead, 
\emph{Hodge polynomials of $\SL(2,\mathbb{C})$-character varieties for curves of small genus}, 
Rev. Mat. Complut. \textbf{26} (2013) 635--703. 


\bibitem{ma} J. Mart\'{\i}nez, \emph{$\PGL(2,\CC)$-character varieties of surface groups}, preprint.

\bibitem{mamu} J. Mart\'{\i}nez and V. Mu\~noz,
\emph{E-polynomials of $\SL(2,\CC)$-character varieties of complex curves of genus $3$}, arXiv:1405.7120.

\bibitem{mereb:2010}
M. Mereb, \emph{On the E-polynomials of a family of character varieties}, Ph.\ D. dissertation, arXiv:1006.1286.

\bibitem{Mu-e}
V. Mu\~noz, \emph{Hodge structures of the moduli space of pairs},
Internat. J. Math., \textbf{21} (2010)  1505--1529.

\end{thebibliography}
\end{document}